\documentclass[11pt]{article}
\usepackage{graphicx}
\usepackage{amsthm, amsmath, amssymb, tikz, bm}
\usepackage{mathtools, intcalc}
\usepackage{xifthen}
\usepackage{comment}
\usepackage[colorlinks=true,
linkcolor=blue,citecolor=blue,
urlcolor=blue]{hyperref}
\usepackage{blkarray}

\usepackage[margin=1.2in]{geometry} 

\usepackage[shortlabels]{enumitem}
\usepackage{todonotes}
\usetikzlibrary{calc, shapes, backgrounds}
\allowdisplaybreaks

\newtheorem{lemma}{Lemma}
\newtheorem{theorem}{Theorem}
\newtheorem{corollary}{Corollary}
\newtheorem{definition}{Definition}

\newtheorem{conjecture}{Conjecture}
\newtheorem{proposition}{Proposition}

\newcommand{\ds}{\displaystyle}
\newcommand{\dss}{\displaystyle\sum}

\newcommand{\E}{{\rm E}}
\newcommand{\lp}{\left(}
\newcommand{\rp}{\right)}

\newcommand{\lcm}{{\rm lcm}}

\usepackage[normalem]{ulem}

%

%


\title{Ricci Curvature Formula: Applications to Bonnet-Myers Sharp Irregular Graphs}
\author{Yupei Li
\thanks{University of South Carolina, Columbia, SC 29208, ({\tt yupei@email.sc.edu})}
 \and Linyuan Lu \thanks{University of South Carolina, Columbia, SC 29208,
({\tt lu@math.sc.edu}).
This author was supported in part by NSF grant DMS 2038080.
}  
}

\begin{document}
\maketitle

\begin{abstract}
In this paper, we establish a simple formula for computing the Lin-Lu-Yau Ricci curvature on graphs. For any edge $xy$ in a simple locally finite graph $G$, the curvature $\kappa(x,y)$ can be expressed as a cost function of an optimal bijection between two blow-up sets of the neighbors of $x$ and $y$. Utilizing this approach, we derive several results including a structural theorem for the Bonnet-Myers sharp irregular graphs of diameter $3$ and a theorem on $C_3$-free
Bonnet-Myers sharp graphs.
\end{abstract}
\section{Introduction}
In Riemannian geometry, the renowned Bonnet–Myers theorem \cite{Myers1941} asserts that if the Ricci curvature of a complete Riemannian manifold $M$ is bounded below by $(n-1)\kappa > 0$, then its diameter is at most $\frac{\pi}{\sqrt{\kappa}}$. Over the past three decades, there have been numerous efforts to extend the concept of Ricci curvature from Riemannian geometry to 
a discrete setting.
There  are different definitions of  Ricci curvature defined on graphs,  see references \cite{Jost, Fchung, LYau, Ollivier1}.
For instance, Ollivier \cite{Ollivier} introduced a notion of Ricci curvature in metric spaces equipped with a measure of a random walk. Lin-Lu-Yau \cite{LLY} defined a variation of Ollivier-type Ricci curvature on graphs. These notions allow for the generalization of several classical theorems associated with positive Ricci curvature, such as the Lichnerowicz theorem, the Bonnet–Myers theorem, and many others. 
Numerous properties and implications of the Ollivier Ricci curvature and Lin-Lu-Yau curvature have been explored,  see \cite{BCLMP, BJL, BBS,  JL, Smith}, etc. These curvatures has been applied in various research areas such as network
analysis\cite{NLLG, JJBP}, quantum computation, dynamic Networks\cite{p-adic}, etc.

In this paper, we consider locally finite graphs, where each vertex has a finite degree, but the total number of vertices may be countably infinite.
Let $G=(V,E)$ be a simple locally finite graph with the vertex set $V$ and edge set $E$. (Here  “simple” refers to the absence of self-loops and multi-edges.) For any vertex $v\in V(G)$, let $N_G(v)$ denote the neighborhood of $v$ in $G$, i.e., $N_G(v) =\{u : vu\in E(G)\}$, and let $N_G[v]:= N_G(v)\cup \{v\}$ denote the closed neighborhood of $v$.  The degree of a vertex $v$, denoted by $d_v$, is the number of the neighbors of $v$, i.e.,  $d_v=|N_G(v)|$. 
A graph $G$ is called {\em $d$-regular} if $d_v=d$ for any vertex $v$. We call $G$ {\em irregular} if it is not regular.
For  $S\subseteq V(G)$, $N_G(S) = \{u \in V(G)\setminus S: us\in E(G) \textrm{ for some } s \in S\}$. 
When $G$ is clear under context, we will omit the subscript $G$ in the notation.

A $uv$-walk $P$ is a sequence of vertices $u=u_0, u_1, \ldots, u_k=v$ such that $u_i$ and $u_{i+1}$ are adjacent for $i=0,\ldots, k-1$. Sometime we write $uPv$ to indicate the walk is from $u$ to $v$. Given two walks $uP_1v$ and $vP_2w$, the concatenation of two walk is a new walk, written by $uP_1vP_2w$, that walks from $u$ to $v$ via $P_1$ and then continues from $v$ to $w$ via $P_2$. A walk $P$ is called $uv$-path if all internal vertices are distinct. A walk is called {\em closed} if $u=v$. A {\em cycle} is a closed walk so that all internal vertices are distinct. We say $G$ is {\em connected} if for any pair of vertices $u$ and $v$, there is a path (or walk) $uPv$.

For any two vertices $u,v \in V(G)$, the \textit{distance} from $u$ to $v$ in $G$, denoted by $d_G(u,v)$, is the number of edges of a shortest path from $u$ to $v$ in $G$. 
The \textit{diameter} of a graph $G$ is defined to be $diam(G)=\sup\{d_G(u,v): u,v\in V(G)\}$. 

A mass distribution $m$ (over the vertex set $V=V(G)$) is a mapping $m: V\to [0,\infty)$. The total mass of $m$ is the $L_1$ norm, 
$\|m\|_1=\sum_{v\in V}m(v)$. A probability distribution is a mass distribution with total mass equal to 1.

Let $m_1$ and $m_2$ be two mass distributions on $V$ with equal total masses.
A {\em coupling} between $m_1$ and $m_2$ is a mapping $A: V\times V \to [0,1]$  such that 
\begin{equation}\label{eq:coupling}
\dss_{y \in V} A(x,y) = m_1(x) \textrm{ and } \dss_{x\in V} A(x,y) = m_2(y).
\end{equation}
The cost of $A$, denoted by $C(A)$, is given by
\begin{equation}
    C(A)=\sum_{u,v}A(u,v)d(u,v).
\end{equation}
The \textit{transportation distance} between the two probability distributions $m_1$ and $m_2$ is defined as follows:
\begin{equation}\label{eq:distance-coupling}
W(m_1, m_2) = \inf_A C(A),
\end{equation}
where the infimum is taken over all coupling $A$ between $m_1$ and $m_2$. By the duality theorem of a linear optimization problem, the transportation distance can also be expressed as follows:
\begin{equation}\label{eq:distance-Lipschitz}
W(m_1, m_2) = \sup_f \dss_{x\in V} f(x) \lp m_1(x)-m_2(x)\rp,
\end{equation}
where the supremum is taken over all $1$-Lipschitz functions $f$.

A \textit{random walk} $m$ on $G=(V,E)$ is defined as a family of probability measures $\{m_v(\cdot)\}_{v\in V}$ such that $m_v(u) = \frac{1}{d_v}$ for any $u\in N(v)$ and $0$ otherwise.
The \textit{Ricci curvature} $\kappa: \binom{V(G)}{2} \to \mathbb{R}$ of $G$ can then be defined as follows:

\begin{definition}
Given a connected locally finite graph $G=(V,E)$, a random walk $m = \{m_v(\cdot)\}_{v\in V}$ on $G$ and two vertices $x,y\in V$, define the Ollivier Ricci Curvature
$$\kappa_0(x,y) = 1 - \frac{W(m_x, m_y)}{d(x,y)}.$$
\end{definition}

For $0\leq \alpha < 1$, the {\em $\alpha$-lazy random walk} $m_x^{\alpha}$ (for any vertex $x$), is defined as 
\[
m_x^{\alpha}(v) = \begin{cases} 
                        \alpha & \textrm{ if $v=x$,}\\
                        (1-\alpha)/d_x &\textrm{ if $v\in N(x)$,}\\
                        0 & \textrm{ otherwise.}
                    \end{cases}
\]

In \cite{LLY}, Lin, Lu, and Yau defined the Ricci curvature of graphs based on the $\alpha$-lazy random walk as $\alpha$ goes to $1$. More precisely,
for any $x,y \in V$, they defined the $\alpha$-Ricci-curvature $\kappa_{\alpha}(x,y)$ to be 
$$\kappa_{\alpha}(x,y) = 1 - \frac{W(m_x^{\alpha}, m_y^{\alpha})}{d(x,y)}$$ and the Lin-Lu-Yau Ricci curvature $\kappa_{\textrm{LLY}}$ of $G$ to be 
\[\kappa_{\textrm{LLY}}(x,y) = \ds\lim_{\alpha \to 1} \frac{\kappa_{\alpha}(x,y)}{(1-\alpha)}.\]
This limit always exists since $\kappa_{\alpha}$ is concave in $\alpha \in [0,1]$ for any two vertices $x,y$ (see \cite{LLY}). 

The parameter $\alpha$ is called the {\em idleness} of the random walk.  
When $\alpha=0$, $m_x^{\alpha}$ becomes classical random walk. The curvature $\kappa_0$ is  Ollivier's original definition of Ricci curvature on graphs (see \cite{Ollivier}). It has been shown in \cite{LLY} that
\begin{equation} \label{eq:relation}
\kappa_{LLY}(x,y)\geq \kappa_0(x,y)
\end{equation}
for any pair of vertices $x,y$.

In this paper, we only consider the Lin-Lu-Yau curvature and simply write
$\kappa_{LLY}(x,y)$ as $\kappa(x,y)$ for the rest of the paper.
Although the Ricci curvature $\kappa(x,y)$ is defined for all pairs $x,y \in V(G)$, it suffices to consider only $\kappa(x,y)$ for $xy\in E(G)$ due to the following lemma.

\begin{lemma}\cite{LLY, Ollivier}\label{lem:adj-pair}
Let $G$ be a connected graph. If $\kappa(x,y) \geq k$ for any edge $xy\in E(G)$, then $\kappa(x,y) \geq k$ for any pair of vertices $(x,y)$.
\end{lemma}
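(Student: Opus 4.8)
The plan is to reduce the general case to the edge case by transporting mass along a shortest path and invoking the triangle inequality for the transportation distance. Fix two vertices $x,y$ with $d(x,y)=d$, and since $G$ is connected choose a geodesic $x=x_0,x_1,\ldots,x_d=y$. The core fact I would use is that for every fixed idleness $\alpha\in[0,1)$, the transportation distance $W(\cdot,\cdot)$ is a genuine metric on the finitely-supported probability distributions on the metric space $(V,d_G)$; the triangle inequality follows by gluing couplings, and the required finiteness of supports is guaranteed because $G$ is locally finite and each $m_v^{\alpha}$ is supported on $N[v]$. Applying this repeatedly along the geodesic gives, for every $\alpha$,
\[
W(m_x^{\alpha},m_y^{\alpha})\le \sum_{i=0}^{d-1} W(m_{x_i}^{\alpha},m_{x_{i+1}}^{\alpha}).
\]

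Next I would translate this inequality back into the language of $\kappa_{\alpha}$. Since consecutive vertices on the geodesic are adjacent, $d(x_i,x_{i+1})=1$, so $W(m_{x_i}^{\alpha},m_{x_{i+1}}^{\alpha})=1-\kappa_{\alpha}(x_i,x_{i+1})$, while $W(m_x^{\alpha},m_y^{\alpha})=d\,(1-\kappa_{\alpha}(x,y))$ because $d(x,y)=d$. Substituting these into the displayed inequality and simplifying the telescoping sum yields, for every $\alpha\in[0,1)$, the \emph{edge-averaging bound}
\[
\kappa_{\alpha}(x,y)\ \ge\ \frac{1}{d}\sum_{i=0}^{d-1}\kappa_{\alpha}(x_i,x_{i+1}).
\]

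Finally I would divide both sides by $(1-\alpha)$ and let $\alpha\to 1$. By the definition of the Lin--Lu--Yau curvature, each quotient $\kappa_{\alpha}(x_i,x_{i+1})/(1-\alpha)$ converges to $\kappa(x_i,x_{i+1})$ and the left-hand side converges to $\kappa(x,y)$, so the limit survives term by term and gives $\kappa(x,y)\ge \frac{1}{d}\sum_{i=0}^{d-1}\kappa(x_i,x_{i+1})$. Since each edge on the geodesic satisfies $\kappa(x_i,x_{i+1})\ge k$ by hypothesis, the right-hand side is at least $\frac{1}{d}\cdot dk=k$, which is the desired conclusion.

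The one point that demands care is the \emph{order of operations}. It is tempting to bound each edge term at fixed $\alpha$ by $W(m_{x_i}^{\alpha},m_{x_{i+1}}^{\alpha})\le 1-k(1-\alpha)$, i.e.\ $\kappa_{\alpha}(x_i,x_{i+1})/(1-\alpha)\ge k$, but this can fail for $\alpha<1$: concavity of $\kappa_{\alpha}$ together with $\kappa_{1}\equiv 0$ forces $\kappa_{\alpha}/(1-\alpha)$ to approach its limit in a way that need not dominate $k$ before the limit is taken. The remedy is exactly the structure above—keep the triangle inequality at fixed $\alpha$ phrased purely in terms of $\kappa_{\alpha}$, and only pass to the limit $\alpha\to 1$ at the very end, where termwise convergence makes the argument go through cleanly.
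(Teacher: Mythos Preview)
The paper does not supply its own proof of this lemma; it simply cites \cite{LLY, Ollivier} and moves on. Your argument is correct and is, in fact, essentially the standard proof from those references: one invokes the triangle inequality for $W$ along a geodesic to obtain $\kappa_\alpha(x,y)\ge \frac{1}{d}\sum_i \kappa_\alpha(x_i,x_{i+1})$ for each fixed idleness, then passes to the limit $\alpha\to 1$ termwise.

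Your cautionary remark about the order of operations is well taken: concavity of $\alpha\mapsto\kappa_\alpha$ together with $\kappa_1=0$ indeed gives $\kappa_\alpha/(1-\alpha)\le \kappa$ for all $\alpha<1$, so one cannot insert the hypothesis $\kappa\ge k$ before taking the limit. A minor simplification you could mention, available in this particular paper, is the piecewise-linearity result of Bourne--Cushing--Liu--M\"unch--Peyerimhoff (Equation~\eqref{eq:BCL}): since the geodesic has only finitely many edges, there is a single $\alpha^\ast<1$ beyond which $\kappa_\alpha(x_i,x_{i+1})=(1-\alpha)\kappa(x_i,x_{i+1})$ holds simultaneously for every $i$, which lets one bypass the limit entirely. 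But your version, which avoids that extra input, is the cleaner and more self-contained route.
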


Bourne-Cushing-Liu-M\"unch-Pyerimhoff proved \cite{BCLMP2018} the following result:

\begin{theorem}(see \cite{BCLMP2018}, Theorem 1.1)
Let $G = (V, E)$ be a locally finite graph. For any edge $xy$,
the function $\alpha \to \kappa_\alpha(x, y)$ is concave and piece-wise linear over $[0, 1]$ with at most 3 linear parts. Furthermore $\kappa_\alpha(x, y)$ is linear on the intervals
$[0, \frac{1}{\lcm(d_x, d_y) + 1}]$
and $[\frac{1}{\max(d_x, d_y) + 1}, 1)$
Thus, if we have the further condition $d_x = d_y$, then $\kappa_\alpha(x, y)$ has at most two linear parts.
\end{theorem}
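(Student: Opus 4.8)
The plan is to reduce everything to the transportation distance and then pair a soft convexity argument with a harder parametric linear‑programming analysis. Since $xy\in E(G)$ we have $d(x,y)=1$, so $\kappa_\alpha(x,y)=1-W(\alpha)$ where $W(\alpha):=W(m_x^\alpha,m_y^\alpha)$, and it suffices to understand $W$ as a function of $\alpha$. The two measures are \emph{affine} in $\alpha$: each assigns idle mass $\alpha$ to its center and per‑neighbor mass $(1-\alpha)/d_x$, resp.\ $(1-\alpha)/d_y$, and both are supported on the finite set $S:=N[x]\cup N[y]$, whose diameter in $G$ is at most $3$ (any $u\in N(x)$ and $w\in N(y)$ satisfy $d(u,w)\le d(u,x)+d(x,y)+d(y,w)=3$). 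All of the analysis takes place on $S$.

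First I would establish concavity of $\kappa_\alpha$, equivalently convexity of $W$, by coupling averaging. If $A_1,A_2$ are optimal couplings realizing $W(\alpha_1),W(\alpha_2)$ and $\alpha=t\alpha_1+(1-t)\alpha_2$, then because the marginals are affine in $\alpha$ the convex combination $tA_1+(1-t)A_2$ is a valid coupling between $m_x^\alpha$ and $m_y^\alpha$ of cost $tW(\alpha_1)+(1-t)W(\alpha_2)$, so $W(\alpha)\le tW(\alpha_1)+(1-t)W(\alpha_2)$. For piecewise linearity I would pass to Kantorovich duality \eqref{eq:distance-Lipschitz}: $W(\alpha)=\sup_f\sum_{v\in S}f(v)\,(m_x^\alpha(v)-m_y^\alpha(v))$ over $1$‑Lipschitz $f$. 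For each fixed $f$ the objective is affine in $\alpha$, so $W$ is a supremum of affine functions, re‑confirming convexity. Since the cost matrix is the integer‑valued graph metric, the dual polytope is integral; normalizing $f(x)=0$ and using $\mathrm{diam}(S)\le 3$ confines each candidate potential to a bounded set of integers, so $W$ is a maximum of \emph{finitely many} affine functions, i.e.\ convex and piecewise linear, with breakpoints exactly where the maximizing potential changes.

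The quantitative claims are the crux. Assume without loss of generality $d_x\le d_y=\max(d_x,d_y)$, and compare the idle mass $\alpha$ with the per‑neighbor masses $(1-\alpha)/d_x\ge(1-\alpha)/d_y$. Since $\lcm(d_x,d_y)\ge d_y$ we have $\tfrac1{\lcm(d_x,d_y)+1}\le\tfrac1{\max(d_x,d_y)+1}$, and these two values are precisely the thresholds at which $\alpha$ crosses the grid generated by the neighbor masses: for $\alpha\le\tfrac1{\lcm(d_x,d_y)+1}\le\tfrac1{d_y+1}$ the idle mass is dominated by both neighbor masses (and at the threshold each neighbor mass is an integer multiple of $\alpha$), while for $\alpha\ge\tfrac1{\max(d_x,d_y)+1}$ the idle mass dominates $(1-\alpha)/d_y$. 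On each extreme interval I would exhibit an explicit optimal coupling whose cost is visibly affine in $\alpha$ — keeping matched mass in place at $x$ and $y$ and routing the remainder by a fixed assignment — and certify optimality with a single fixed integer‑valued $1$‑Lipschitz potential $f$ (increasing toward $x$, decreasing toward $y$) whose dual value equals the primal cost throughout the interval. On the middle interval $[\tfrac1{\lcm(d_x,d_y)+1},\tfrac1{\max(d_x,d_y)+1}]$ the $\lcm$‑grid structure would force a single stable optimal assignment, yielding at most one further linear piece and hence at most three in total; when $d_x=d_y$ this interval degenerates to a point, leaving only two pieces.

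The main obstacle is this last paragraph: producing the explicit optimal couplings with matching dual certificates in all three regimes, verifying complementary slackness, and confirming that the combinatorial type of the optimum changes \emph{only} at $\tfrac1{\lcm(d_x,d_y)+1}$ and $\tfrac1{\max(d_x,d_y)+1}$. Pinning the breakpoints to exactly these values — rather than merely bounding the number of pieces — is the delicate bookkeeping, and it is precisely where the arithmetic of $\lcm(d_x,d_y)$ and $\max(d_x,d_y)$ genuinely enters.
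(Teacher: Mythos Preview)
The paper does not give its own proof of this statement: it is quoted verbatim as Theorem~1.1 of \cite{BCLMP2018} and used only via its consequence \eqref{eq:BCL}. There is therefore nothing in the present paper to compare your proposal against.

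As to the proposal itself: the first two blocks are solid and standard. Convexity of $W(\alpha)$ follows immediately from the fact that the marginals $m_x^\alpha,m_y^\alpha$ are affine in $\alpha$, so convex combinations of couplings are again couplings; piecewise linearity then follows because the Kantorovich dual on the finite support $S=N[x]\cup N[y]$ with integer cost matrix admits an integer optimal potential, and after normalizing $f(x)=0$ there are only finitely many such potentials with values in $\{-3,\ldots,3\}$. This already gives concavity and piecewise linearity of $\kappa_\alpha$ with finitely many pieces.

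The gap is exactly where you locate it, and it is not merely bookkeeping. On the high‑idleness interval $[\tfrac{1}{\max(d_x,d_y)+1},1)$ one really needs to show that a \emph{single} integer potential remains optimal throughout; the heuristic ``the idle mass dominates $(1-\alpha)/d_y$'' does not by itself select a fixed optimizer. Likewise the assertion that the middle interval supports ``a single stable optimal assignment'' because of an ``$\lcm$‑grid structure'' is not an argument: the parametric transportation problem can in principle have additional breakpoints inside that interval, and ruling them out requires an explicit primal/dual pair valid on the whole subinterval, not just an observation about divisibility at the endpoints. In short, you have correctly identified the architecture of the proof and the easy half, but the content of the theorem --- that the breakpoints occur precisely at $\tfrac{1}{\lcm(d_x,d_y)+1}$ and $\tfrac{1}{\max(d_x,d_y)+1}$ and nowhere else --- is contained entirely in the step you did not carry out.
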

Their result implies
\begin{equation}\label{eq:BCL}
  \kappa(x,y)=\frac{1}{(1-\alpha)}\kappa_\alpha, \hspace*{1cm} \mbox{ for }\alpha\in \left[\frac{1}{\max\{d_x,d_y\}+1},1\right).
\end{equation}

M\"{u}nch and Wojciechowski \cite{MW} gave another limit-free formulation of the Lin-Lu-Yau Ricci curvature using \textit{graph Laplacian}. For a graph $G = (V,E)$, the (negative) combinatorial graph Laplacian $\Delta$ is defined as: 
$$\Delta f(x)=\frac{1}{d_x}\sum\limits_{y\in N(x)} (f(y)-f(x)). $$

\begin{theorem}\label{thm:curvature_laplacian}\cite{MW} (Curvature via the Laplacian) Let $G$ be a simple graph and let $x \neq y \in V(G)$. Then 
\begin{equation} \label{eq:curv_laplacian}
    \kappa(x,y) = \inf_{\substack{f \in Lip(1)\\ \nabla_{yx}f = 1}} \nabla_{xy} \Delta f, 
\end{equation}
where $\nabla_{xy}f=\frac{f(x)-f(y)}{d(x,y)}$. 
\end{theorem}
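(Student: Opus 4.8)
The plan is to start from the Lin--Lu--Yau definition $\kappa(x,y)=\lim_{\alpha\to 1}\kappa_\alpha(x,y)/(1-\alpha)$ and to convert the transportation distance $W(m_x^\alpha,m_y^\alpha)$ into a supremum over $1$-Lipschitz functions using the Kantorovich duality \eqref{eq:distance-Lipschitz}. The key observation is that the lazy walk $m_x^\alpha$ pairs with a test function $f$ precisely through the Laplacian: using $\frac{1}{d_x}\dss_{z\in N(x)}f(z)=f(x)+\Delta f(x)$ one computes
\[
\dss_{z\in V} f(z)\lp m_x^\alpha(z)-m_y^\alpha(z)\rp=\lp f(x)-f(y)\rp+(1-\alpha)\lp\Delta f(x)-\Delta f(y)\rp.
\]
Substituting this into $\kappa_\alpha(x,y)=1-W(m_x^\alpha,m_y^\alpha)/d(x,y)$ and using $-\sup_f=\inf_f(-\,\cdot\,)$ gives, for every $\alpha\in[0,1)$,
\[
\frac{\kappa_\alpha(x,y)}{1-\alpha}=\inf_{f\in Lip(1)}\left[\frac{1-\nabla_{xy}f}{1-\alpha}-\nabla_{xy}\Delta f\right].
\]

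Write $h(f):=1-\nabla_{xy}f$ and $g(f):=\nabla_{xy}\Delta f$. Since $f$ is $1$-Lipschitz we have $h(f)\ge 0$, and since every neighbour of $x$ (resp.\ $y$) lies at distance $1$ we have $|\Delta f|\le 1$, so $g$ is bounded. Moreover the whole objective depends on $f$ only through its values on the finite set $S:=N[x]\cup N[y]$, and by the McShane extension $\tilde f(v)=\min_{u\in S}\lp f(u)+d(u,v)\rp$ every $1$-Lipschitz function on $(S,d)$ extends to one on $G$. Hence the infimum reduces to a finite linear program over the compact set of $1$-Lipschitz functions on $S$ normalized by $f(x)=0$, and is therefore attained at some $f_\alpha$.

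It remains to evaluate the limit $\alpha\to 1$ and identify it with $\inf\{-g(f)\colon f\in Lip(1),\ h(f)=0\}$. The upper bound is immediate, since any $f_0$ with $h(f_0)=0$ is a feasible competitor for every $\alpha$ and yields $\kappa_\alpha/(1-\alpha)\le -g(f_0)$. For the matching lower bound, the boundedness of $g$ together with the convergence of $\kappa_\alpha/(1-\alpha)$ forces $h(f_\alpha)/(1-\alpha)$ to remain bounded, whence $h(f_\alpha)\to 0$; extracting a convergent subsequence $f_{\alpha_k}\to f^*$ by compactness gives a limit with $h(f^*)=0$ and
\[
\kappa(x,y)=\lim_{k}\left(\frac{h(f_{\alpha_k})}{1-\alpha_k}-g(f_{\alpha_k})\right)\ge -g(f^*)\ge \inf\{-g\colon h=0\}.
\]
Combining the two bounds yields $\kappa(x,y)=\inf_{f\in Lip(1),\,\nabla_{xy}f=1}\lp-\nabla_{xy}\Delta f\rp$.

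Finally, the stated formula follows from the substitution $f\mapsto -f$, which preserves $Lip(1)$, turns the constraint $\nabla_{xy}f=1$ into $\nabla_{yx}f=1$, and sends $-\nabla_{xy}\Delta f$ to $\nabla_{xy}\Delta f$ (because $\Delta(-f)=-\Delta f$). The delicate point of the whole argument is the interchange of limit and infimum in the previous paragraph: it is exactly the uniform bound $|\Delta f|\le 1$ and the finite-dimensional reduction that prevent the near-minimizers from drifting into the region $h>0$ as $\alpha\to 1$, and I expect this to be the main obstacle. One can additionally invoke \eqref{eq:BCL} to note that the ratio $\kappa_\alpha/(1-\alpha)$ is already constant for $\alpha$ near $1$, so that once the minimizer is controlled the passage to the limit is harmless.
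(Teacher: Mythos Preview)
The paper does not give its own proof of this theorem: it is stated with the citation \cite{MW} (M\"unch--Wojciechowski) and used as background, with only the additional remark that the optimal $f$ can be taken integer-valued. There is therefore no in-paper argument to compare your proposal against.

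Your argument is correct. The computation
\[
\dss_{z\in V} f(z)\,m_x^\alpha(z)=f(x)+(1-\alpha)\Delta f(x)
\]
is the right starting point, and the reduction of the Kantorovich dual problem to the finite set $S=N[x]\cup N[y]$ via McShane extension is exactly what makes the compactness step legitimate. The interchange of limit and infimum is handled cleanly: since $h(f)\ge 0$ and $|g(f)|\le 2/d(x,y)$, the minimizers $f_\alpha$ satisfy $h(f_\alpha)=O(1-\alpha)$, and after normalizing $f(x)=0$ the family lives in a compact subset of $\mathbb{R}^{|S|}$, so a subsequential limit $f^*$ with $h(f^*)=0$ exists and gives the lower bound $\kappa(x,y)\ge -g(f^*)$. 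The final substitution $f\mapsto -f$ is the correct way to pass from the constraint $\nabla_{xy}f=1$ to $\nabla_{yx}f=1$. One cosmetic remark: your invocation of \eqref{eq:BCL} at the end is not needed for the proof itself---the existence of the limit $\kappa(x,y)$ together with the bounds you already established suffices---though it does give an alternative (and shorter) route to the lower bound, since for $\alpha$ in the linear range the minimizer $f_\alpha$ can be taken independent of $\alpha$ and then automatically satisfies $h(f_\alpha)=0$.
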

It was also proved (see \cite{MW, BCLMP, CK})
that the optimal solution $f$ in \eqref{eq:curv_laplacian} can be chosen to be an integer-valued function.

Bai, Huang, Lu, and Yau prove a dual theorem for a limit-free definition for the Lin-Lu-Yau Ricci curvature. 
For any two vertices $x$ and $y$, a {\em $\ast$-coupling} between $m^0_x$ and $m^0_y$ is a mapping $B: V\times V\to \mathbb{R}$ with finite support such that 
\begin{enumerate}
    \item $0<B(x,y)$, but all other values $B(u,v)\leq 0$.
    \item $\sum\limits_{u,v\in V} B(u,v)=0$.   
    \item $\sum\limits_{v \in V} B(u, v)=-m^0_x(u)$ for all $u$ except $x$.
    \item $\sum\limits_{u \in V} B(u, v)=-m^0_y(v)$ 
for all $v$ except $y$.  
\end{enumerate}

\begin{theorem}\cite{BHLY} (Curvature via the $\ast$-Coupling function)\label{thm:curvatureviacoupling}
For any two vertex $x, y\in V(G)$, we have
\begin{equation} \label{eq:curv_coupling}
\kappa(x,y)=\frac{1}{d(x,y)}\sup\limits_{B} \sum\limits_{u, v\in V} B(u, v)d(u, v),
\end{equation}
where the superemum is taken over all $\ast$-coupling $B$ between $m^0_x$ and $m^0_y$.
\end{theorem}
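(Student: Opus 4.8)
The plan is to recognize the M\"unch--Wojciechowski formula of Theorem~\ref{thm:curvature_laplacian} as a linear program and to identify the $\ast$-coupling problem as its dual. First I would rewrite the objective in terms of the random walks. Since $\Delta f(x)=\sum_{u}m^0_x(u)f(u)-f(x)$ and likewise for $y$, the normalization $\nabla_{yx}f=1$, i.e. $f(y)-f(x)=d(x,y)$, yields
\[
d(x,y)\,\nabla_{xy}\Delta f=\sum_{u}m^0_x(u)f(u)-\sum_{v}m^0_y(v)f(v)+d(x,y).
\]
Hence by Theorem~\ref{thm:curvature_laplacian}, $d(x,y)\kappa(x,y)$ is the infimum of the right-hand side over all $1$-Lipschitz $f$ with $f(y)-f(x)=d(x,y)$, which is a linear program whose constraints are the Lipschitz inequalities $f(v)-f(u)\le d(u,v)$ together with the single equality $f(y)-f(x)=d(x,y)$.

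The easy direction is weak duality, which I would prove by direct computation. Let $B$ be any $\ast$-coupling and $f$ any feasible function. On each pair $(u,v)\neq(x,y)$ we have $B(u,v)\le 0$ and $d(u,v)\ge f(v)-f(u)$, so $B(u,v)d(u,v)\le B(u,v)\big(f(v)-f(u)\big)$; on the pair $(x,y)$ the equality $d(x,y)=f(y)-f(x)$ makes this an identity. Summing and regrouping by rows and columns, and using conditions 3 and 4 together with condition 2 (which force the row sum at $x$ and the column sum at $y$ to equal $1$), the right-hand side collapses to exactly $d(x,y)\,\nabla_{xy}\Delta f$. Taking the infimum over $f$ and the supremum over $B$ gives $\frac{1}{d(x,y)}\sup_{B}\sum_{u,v}B(u,v)d(u,v)\le\kappa(x,y)$.

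The reverse inequality is where the real work lies, and I would obtain it as strong duality. Assigning a free multiplier to the equality constraint and nonnegative multipliers $-B(u,v)$ to the Lipschitz inequalities, dual feasibility (stationarity in each $f(w)$) reproduces conditions 1--4: the non-positivity of the off-diagonal entries, the marginal identities for $u\neq x$ and $v\neq y$, and condition 2 as the balance forced by the translation invariance $f\mapsto f+c$ of the primal; the equality multiplier becomes $B(x,y)$. Because $m^0_x$ and $m^0_y$ have finite support and, by the integrality refinement of \cite{MW}, an optimal $f$ may be taken integer-valued with controlled range, the program reduces to a finite-dimensional one, so strong duality applies and the optimum is attained. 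The main obstacle is precisely this reduction and its bookkeeping: one must confine the problem to a finite region without changing the optimal value, check that the dual constraints match the four axioms exactly (in particular that the equality multiplier is forced strictly positive, giving $0<B(x,y)$), and, for an explicit optimizer, build $B$ from the optimal integer potential $f$ by complementary slackness, routing the deficits $-m^0_x(u)$ and $-m^0_y(v)$ as a transportation flow supported on the Lipschitz-tight pairs where $f(v)-f(u)=d(u,v)$. Verifying that such a flow exists and satisfies all marginals is the delicate combinatorial step.
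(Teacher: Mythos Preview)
The paper does not give its own proof of this theorem: it is quoted from \cite{BHLY}. The paper merely remarks that Equations~\eqref{eq:curv_laplacian} and~\eqref{eq:curv_coupling} are dual linear programs, and Lemma~\ref{lem:coupling} records the associated complementary slackness; the chain of equalities in that proof is exactly your weak-duality calculation. So there is nothing here to compare your proposal against beyond that remark, and your LP-duality outline is precisely the framework the paper invokes.

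One small simplification: you flag the strict positivity of $B(x,y)$ as something to be checked separately, but it is automatic from the other axioms. Summing condition~3 over all $u\neq x$ gives $\sum_{u\neq x}\sum_v B(u,v)=-1$, and subtracting this from condition~2 yields $\sum_v B(x,v)=1$; since $B(x,v)\le 0$ for $v\neq y$, this forces $B(x,y)\ge 1>0$. Hence once the stationarity conditions reproduce conditions~2--4 and the sign constraints on the Lipschitz multipliers give the nonpositivity of the off-$(x,y)$ entries, condition~1 comes for free and the dual feasible set coincides with the set of $\ast$-couplings.
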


In this paper, we will derive a straightforward formula to calculate $\kappa(x,y)$ for any
edge $xy\in E(G)$. We use the following notation. Let $\lcm(d_x,d_y)$ denote the least common multiplier of $d_x$ and $d_y$. 
Let $c_x$ and $c_y$ are a pair of relative prime integers such that
\[ \lcm(d_x,d_y)=c_xd_x=c_yd_y.\]

Without loss of generality, we assume $d_x\leq d_y$. Equivalently, we have $c_x\geq c_y$. 
Let $\mu_x$ be a mass distribution defined as
\[
\mu_x(u)=\begin{cases}
c_x-c_y & \mbox{ if } u\in N(x)\cap N[y],\\
   c_x &\mbox{ if } u \in N(x)\backslash N[y],\\
0 &\mbox{otherwise.}   
\end{cases}
\]
Let $\mu_y$ be a mass distribution defined as
\[
\mu_y(u)=\begin{cases}
   c_y &\mbox{ if } u \in N(y)\backslash N[x],\\
0 &\mbox{otherwise.}   
\end{cases}
\]

For any coupling $\sigma$ between $\mu_x$ and $\mu_y$, recall the cost function $C(\sigma)$
is
\begin{equation}
    C(\sigma)=\sum_{u,v} \sigma(u,v)d(u,v).
\end{equation}
In the expression of the cost function 
$C(\sigma)$ above, it suffices to sum up only for $u\in N(x)$ and $v\in N(y)\setminus N[x]$ because $\sigma(u,v)=0$ if $(u,v)\not\in N(x)\times (N(y)\setminus N[x])$.

We have the following theorem.
\begin{theorem}\label{thm:localstructure}
For any edge $xy\in E(G)$, assuming $d_x\leq d_y$, we have
\begin{equation}\label{eq:formula}
    \kappa(x,y)= 1+\frac{1}{d_y}-\frac{\min_{\sigma} C(\sigma)}{\lcm(d_x, d_y)}.
\end{equation}
Here the minimum is taken over all integer-valued couplings $\sigma$ between $\mu_x$ and $\mu_y$.
\end{theorem}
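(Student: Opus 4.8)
The plan is to start from the Bourne--Cushing--Liu--M\"unch--Peyerimhoff identity \eqref{eq:BCL}: since $xy$ is an edge we have $d(x,y)=1$, so for every idleness $\alpha\in[\tfrac1{d_y+1},1)$ one has $\kappa(x,y)=\tfrac1{1-\alpha}\kappa_\alpha(x,y)=\tfrac{1-W(m_x^\alpha,m_y^\alpha)}{1-\alpha}$, and the whole problem reduces to evaluating the transportation distance $W(m_x^\alpha,m_y^\alpha)$ for $\alpha$ in this linear range. First I would cancel the common mass: writing $\rho=m_x^\alpha-m_y^\alpha$, the overlap $\min(m_x^\alpha,m_y^\alpha)$ at each vertex stays in place, so $W(m_x^\alpha,m_y^\alpha)$ equals the least cost of moving the positive part of $\rho$ onto its negative part. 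Throughout, $C(\sigma)=\sum_{u,v}\sigma(u,v)\,d(u,v)$ denotes the cost of a coupling.

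Next I would rescale by $\tfrac{\lcm(d_x,d_y)}{1-\alpha}$ to make the masses integral. Splitting $N(x)=\{y\}\cup\bigl(N(x)\cap N(y)\bigr)\cup\bigl(N(x)\setminus N[y]\bigr)$ and $N(y)=\{x\}\cup\bigl(N(x)\cap N(y)\bigr)\cup\bigl(N(y)\setminus N[x]\bigr)$ and using $c_xd_x=c_yd_y=\lcm(d_x,d_y)$, a direct vertex-by-vertex computation should yield the clean decomposition
\begin{equation*}
\frac{\lcm(d_x,d_y)}{1-\alpha}\,\rho=(\mu_x-\mu_y)+\beta(\delta_x-\delta_y),\qquad \beta:=\frac{\lcm(d_x,d_y)\,\alpha}{1-\alpha}-c_y,
\end{equation*}
where $\delta_z$ is the unit mass at $z$; one checks that $\beta\ge 0$ throughout the linear range, with $\beta=0$ exactly at $\alpha=\tfrac1{d_y+1}$. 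Consequently $W(m_x^\alpha,m_y^\alpha)=\tfrac{1-\alpha}{\lcm(d_x,d_y)}\,T$, where by Kantorovich duality $T=\sup_{f\in\mathrm{Lip}(1)}\sum_w f(w)\bigl[(\mu_x-\mu_y)+\beta(\delta_x-\delta_y)\bigr](w)$ is the minimum cost of moving the positive part of the displayed signed measure onto its negative part.

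The easy direction goes through \eqref{eq:distance-Lipschitz}: for every $1$-Lipschitz $f$ we have $\sum_w f(w)(\mu_x-\mu_y)(w)\le \min_\sigma C(\sigma)$ (LP duality for the transportation problem, whose optimum is attained at an integral $\sigma$ since the constraint matrix is totally unimodular) together with $f(x)-f(y)\le 1$; as $\beta\ge0$, adding these gives $T\le \min_\sigma C(\sigma)+\beta$. Substituting back and simplifying $\tfrac1{1-\alpha}-\tfrac{\beta}{\lcm(d_x,d_y)}=1+\tfrac1{d_y}$ yields $\kappa(x,y)\ge 1+\tfrac1{d_y}-\tfrac{\min_\sigma C(\sigma)}{\lcm(d_x,d_y)}$, i.e. one half of \eqref{eq:formula}.

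For the reverse inequality I need the matching bound $T\ge \min_\sigma C(\sigma)+\beta$, and this is where the real work lies. By duality it amounts to producing a \emph{single} $1$-Lipschitz $f$ that is simultaneously optimal for the $\mu_x\to\mu_y$ transport, $\sum_w f(w)(\mu_x-\mu_y)(w)=\min_\sigma C(\sigma)$, and satisfies $f(x)-f(y)=1$. The useful observation is that $(\mu_x-\mu_y)(x)=0$, so $f(x)$ does not enter the objective and may be reset to $f(y)+1$ as long as this respects the Lipschitz condition on the edges at $x$; since every $w\in N(x)$ already satisfies $f(w)\le f(y)+2$ (because $d(w,y)\le 2$), the one thing to verify is $f(y)=\min_{w\in N(x)}f(w)$, i.e. that $y$ carries the smallest potential among the neighbors of $x$. \textbf{This is the main obstacle.} I expect to establish it either by a complementary-slackness/exchange argument on an optimal potential (a source $w\in N(x)$ with $f(w)<f(y)$ must be pinned from below by a neighbor, and its value can be pushed up without decreasing the objective), or, exploiting the freedom to let $\alpha\to1$, by noting that $\Phi(s):=\max\{\sum_w f(w)(\mu_x-\mu_y)(w):f\in\mathrm{Lip}(1),\,f(x)-f(y)=s\}$ is concave on $[-1,1]$, so $\sup_s(\Phi(s)+\beta s)$ is attained at $s=1$ once $\beta$ is large, and then identifying $\Phi(1)$ with $\min_\sigma C(\sigma)$. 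Reassuringly, the same crux resurfaces if one instead argues through the M\"unch--Wojciechowski formula of Theorem \ref{thm:curvature_laplacian}: normalizing $f(x)=1,\ f(y)=0$ reduces \eqref{eq:curv_laplacian} to exactly $1+\tfrac1{d_y}-\tfrac1{\lcm(d_x,d_y)}\sup_f\sum_w f(w)(\mu_x-\mu_y)(w)$ over $f\in\mathrm{Lip}(1)$ with $f(x)-f(y)=1$, so the whole theorem hinges on showing this constrained supremum equals the unconstrained one $\min_\sigma C(\sigma)$.
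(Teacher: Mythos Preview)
Your setup is correct, and in fact you have already finished the proof without realizing it. You observe that $\beta=0$ precisely at $\alpha=\tfrac{1}{d_y+1}$; at that value your decomposition reads $\tfrac{\lcm(d_x,d_y)}{1-\alpha}\,\rho=\mu_x-\mu_y$, and since $\mu_x$ and $\mu_y$ have disjoint supports this is its Jordan decomposition. Hence
\[
T=\sup_{f\in\mathrm{Lip}(1)}\sum_w f(w)(\mu_x-\mu_y)(w)=W(\mu_x,\mu_y)=\min_\sigma C(\sigma)
\]
by the very LP duality you already invoked for the ``easy direction'' (and integrality of the optimal $\sigma$ follows from Lemma~\ref{lem:integerA} or total unimodularity). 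Substituting gives $\kappa(x,y)=\tfrac{1}{1-\alpha}-\tfrac{T}{\lcm(d_x,d_y)}=1+\tfrac{1}{d_y}-\tfrac{\min_\sigma C(\sigma)}{\lcm(d_x,d_y)}$. The ``main obstacle'' you identify---producing an optimal $f$ with $f(x)-f(y)=1$---is only needed if you insist on proving $T=\min_\sigma C(\sigma)+\beta$ for \emph{all} $\beta\ge 0$, but you only need one value of $\alpha$, and $\beta=0$ suffices.

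By comparison, the paper also fixes $\alpha=\tfrac{1}{d_y+1}$ from the outset but works entirely on the primal (coupling) side: it shows $W(\tilde\mu_x,\tilde\mu_y)=W(\mu_x,\mu_y)$ by taking an optimal integer coupling between the scaled lazy walks $\tilde\mu_x,\tilde\mu_y$ and iteratively rerouting mass so that every $z\in N[x]\cap N[y]$ keeps $c_y$ units in place (``Property~A''), after which stripping the diagonal yields a coupling between $\mu_x$ and $\mu_y$ of the same cost. Your dual argument is cleaner here---``common mass stays in place'' is automatic once one notes that $W$ depends only on the signed measure $m_x^\alpha-m_y^\alpha$---whereas the paper spends a page on the rerouting. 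What the paper's primal argument buys is a direct construction of the optimal $\sigma$ (and the $\ast$-coupling $B_\sigma$ of Lemma~\ref{lem:duality2}) from an optimal transport plan for the lazy walk, which feeds into the later structural lemmas.
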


For example, consider the following graph in Figure \ref{fig:example1}. We have $d_x=3$ and $d_y=4$. Thus, $\lcm(d_x,d_y)=12$, $c_x=4$, and $c_y=3$. 
The mass distribution $\mu_x$ is labelled in red color while $\mu_y$ is labelled in black color.  
Any coupling between $\mu_x$ and $\mu_y$ has a cost of at least $14$ since  transferring $4$ units from $x_1$ to $y_1$ or $y_2$ costs at least $4\times 3=12$ and moving other two units has the cost of at least $2$. Clearly, an optimal coupling with cost $14$ exists. For example, one can transfer
one unit from $z$ to $y_1$, one unit from $y$ to $y_2$, two units from $x_1$ to $y_1$, and two units from $x_1$ to $y_2$. 
So we have $$\min_\sigma C(\sigma)=14.$$
By Theorem \ref{thm:localstructure}, we get
\[\kappa(x,y) = 1 + \frac{1}{4}-\frac{14}{12}=\frac{1}{12}.\]
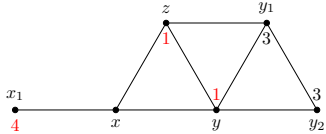
\begin{figure}[htb]
\begin{center}
     \resizebox{4.5cm}{!}{\begin{tikzpicture}[scale=1, Wvertex/.style={circle, draw=black, fill=white, scale=3}, bvertex/.style={circle, draw=black, fill=black, scale=0.3},rvertex/.style={circle, draw=red, fill=red, scale=0.2}]

\node [bvertex, label={[font=\small] below:$x$}] (x) at (-1,0) {};
\node [bvertex, label={[font=\small] below:$y$}, label={[font=\small] above:\textcolor{red}{$1$}}]  (y) at (1,0) {};
\node [bvertex, label={[font=\small] above:$z$}, label={[font=\small] below:\textcolor{red}{$1$}}] (z) at (0,1.73) {};
\node [bvertex, label={[font=\small] above:$y_1$}, label={[font=\small] below:$3$}] (y1) at (2,1.73) {};
\node [bvertex, label={[font=\small] below:$y_2$}, label={[font=\small] above:$3$}] (y2) at (3,0) {};
\node [bvertex, label={[font=\small] above:$x_1$}, label={[font=\small] below:\textcolor{red}{$4$}}] (x1) at (-3, 0) {};

\draw (x) -- (y);
\draw (x) -- (z);
\draw (y) -- (z);
\draw (z) -- (y1);
\draw (y) -- (y1);
\draw (y) -- (y2);
\draw (x) -- (x1);
\draw (y1) -- (y2);

\end{tikzpicture}	}    
\end{center}
    \caption{A example: Mass distributions $\mu_x$ (in red color) and $\mu_y$ (in black color).}
    \label{fig:example1}
\end{figure}

The integer-valued coupling can be viewed as a bijection between the blowup of 
$N(x)$ and $N(y)\setminus N[x]$. More discussions will be given in Section 3. It is useful for understanding the structural meaning of Ricci curvature $\kappa(x,y)$.  The special case of $d_x=d_y$ was  considered by Bai and Lei \cite{BR2022}. They derived a formula using the number of (almost)-disjoint $C_3$, $C_4$, and $C_5$ containing both $x$ and $y$. The formula can be shown to equivalent to
Equation \eqref{eq:formula}.

Lin, Lu, and Yau \cite{LLY} and Ollivier \cite{Ollivier}  proved the Bonnet-Myers type Theorem on Graphs:
\begin{lemma} \label{lem:diam}
If for every edge $xy \in E(G)$, $\kappa(x,y) \geq \kappa_1 > 0$, then the diameter of the graph $G$
$$\textrm{diam}(G) \leq \frac{2}{\kappa_1}.$$
\end{lemma}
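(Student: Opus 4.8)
The plan is to combine the edge-to-pair reduction of Lemma~\ref{lem:adj-pair} with the Kantorovich--Rubinstein dual formula~\eqref{eq:distance-Lipschitz}, tested against a single well-chosen $1$-Lipschitz function. First I would observe that the hypothesis $\kappa(x,y)\ge\kappa_1$ on every edge upgrades, via Lemma~\ref{lem:adj-pair}, to $\kappa(x,y)\ge\kappa_1$ for \emph{every} pair of vertices. It therefore suffices to prove that any pair $x,y$ with $d(x,y)=D$ satisfies $\kappa(x,y)\le 2/D$; feeding this into $\kappa_1\le\kappa(x,y)\le 2/D$ yields $D\le 2/\kappa_1$, and since this holds for every realized distance $D$ it bounds the supremum $\mathrm{diam}(G)$.

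Fix such a pair $x,y$ with $d(x,y)=D$ and consider the function $f(z)=d(x,z)$, which is $1$-Lipschitz. The core computation is to evaluate $f$ against the two $\alpha$-lazy walks. On the $x$-side every neighbor $u\in N(x)$ has $f(u)=1$, so $\sum_z f(z)\,m_x^{\alpha}(z)=(1-\alpha)$. On the $y$-side, $f(y)=D$ and, by the triangle inequality, every neighbor $u\in N(y)$ satisfies $f(u)=d(x,u)\ge d(x,y)-d(y,u)=D-1$; hence $\sum_z f(z)\,m_y^{\alpha}(z)\ge \alpha D+(1-\alpha)(D-1)=D-(1-\alpha)$. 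Plugging these into~\eqref{eq:distance-Lipschitz} with the test function $f$ (which may be used with either sign) gives the lower bound
\[
W(m_x^{\alpha},m_y^{\alpha})\ \ge\ \sum_z f(z)\bigl(m_y^{\alpha}(z)-m_x^{\alpha}(z)\bigr)\ \ge\ D-2(1-\alpha).
\]

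Next I would convert this transportation estimate into a curvature bound. By definition $\kappa_{\alpha}(x,y)=1-W(m_x^{\alpha},m_y^{\alpha})/D$, so the displayed inequality yields $\kappa_{\alpha}(x,y)\le 2(1-\alpha)/D$ for every $\alpha\in[0,1)$. Dividing by $1-\alpha$ and letting $\alpha\to 1$ (the limit exists, as recorded just after the definition of $\kappa_{\textrm{LLY}}$) gives $\kappa(x,y)\le 2/D$, which is exactly what was needed. Note that this argument sidesteps the idleness concavity entirely, since the per-$\alpha$ inequality already survives passage to the limit.

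I do not expect a serious obstacle here; the only points requiring care are the direction of the dual inequality (one must test against $f=d(x,\cdot)$ so that the mass of $m_y^{\alpha}$, which sits far from $x$, contributes positively) and the treatment of an infinite diameter. For the latter, if $\mathrm{diam}(G)=\infty$ one simply applies the bound $D\le 2/\kappa_1$ to pairs of arbitrarily large distance $D$, obtaining a contradiction; thus the diameter is finite and the stated bound $\mathrm{diam}(G)\le 2/\kappa_1$ holds.
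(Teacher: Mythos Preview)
The paper does not supply its own proof of Lemma~\ref{lem:diam}; the result is quoted from \cite{LLY, Ollivier}. Your argument is correct and self-contained: you upgrade the edge hypothesis to all pairs via Lemma~\ref{lem:adj-pair}, then test the Kantorovich--Rubinstein dual~\eqref{eq:distance-Lipschitz} with $f=d(x,\cdot)$ against the $\alpha$-lazy walks to obtain $W(m_x^{\alpha},m_y^{\alpha})\ge D-2(1-\alpha)$, hence $\kappa_{\alpha}(x,y)\le 2(1-\alpha)/D$, and pass to the limit. The sign bookkeeping and the infinite-diameter caveat are handled correctly.

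For context, the paper does implicitly rederive $\mathrm{diam}(G)\cdot\kappa_{\min}\le 2$ inside the proof of Lemma~\ref{lem:xi}, but by a different mechanism: there one telescopes the Laplacian formula of Theorem~\ref{thm:curvature_laplacian} along a geodesic $x=x_0,\dots,x_L=y$, getting
\[
L\,\kappa_{\min}\ \le\ \sum_{i=1}^{L}\kappa(x_{i-1},x_i)\ \le\ \sum_{i=1}^{L}\bigl(\Delta f(x_{i-1})-\Delta f(x_i)\bigr)\ =\ \Delta f(x_0)-\Delta f(x_L)\ \le\ 2.
\]
That route never leaves the edge hypothesis and avoids both Lemma~\ref{lem:adj-pair} and the $\alpha$-walks; yours treats the two endpoints directly via transport. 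Both are short and standard.
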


Let $\kappa_{min}(G)=\min_{xy\in E(G)} \kappa(x,y)$.
We say a connected graph $G$ is {\em Bonnet-Myers sharp} if $diam(G)=\frac{2}{\kappa_{min}}$.  A pair of vertices $(x,y)$ in an Bonnet-Myers sharp graph is called {\em poles} if $d(x,y)=diam(G)$. In this case, we call $x$ (and $y$) is a {\em pole}.

Cushing-Kamtue-Koolen-Liu-M\"unch-Peyerimhoff\cite{CKKLMP} studied Bonnet-Myers sharp regular graphs.
They called a graph $G$ is
  {\em (D,L)-Bonnet-Myers sharp} if $G$ is $D$-regular, $diam(G)=L$, and is Bonnet-Myers sharp. A graph $G=(V,E)$ is called {\em self-centered} if, for every vertex $x \in V$, there exists a
vertex $\overline{x} \in V$ such that
$d(x,\overline{x}) = {\rm diam}(G)$. Here is their main result.

\begin{theorem}\cite{CKKLMP} \label{thm:selfcentered}
  Self-centered $(D,L)$-Bonnet-Myers sharp graphs are precisely the following
  graphs:
  \begin{enumerate}
  \item hypercubes $Q_n$, $n \ge 1$;
  \item cocktail party graphs $CP(n)$, $n \ge 3$; 
  \item the Johnson graphs $J(2n,n)$, $n \ge 3$;
  \item even-dimensional demi-cubes $Q_{2n}^{(2)}$, $n \ge 3$; 
  \item the Gosset graph;
  \end{enumerate}
  and Cartesian products of above graphs satisfying
  \begin{equation} \label{eq:cartprod_cond0} 
    \frac{D_1}{L_1} = \frac{D_2}{L_2} =\cdots = \frac{D_k}{L_k}.
  \end{equation}
\end{theorem}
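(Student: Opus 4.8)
The plan is to extract rigidity from the two hypotheses and then reduce to a classification of indecomposable pieces. Since $G$ is $D$-regular and Bonnet-Myers sharp, we have $\kappa_{min}(G)=2/L$, so $\kappa(x,y)=2/L$ on \emph{every} edge and the bound of Lemma \ref{lem:diam} is attained; self-centeredness supplies for each vertex $x$ an antipode $\bar x$ with $d(x,\bar x)=L$. First I would fix a pole $x_0$, grade the vertices by the spheres $S_i=\{v:d(x_0,v)=i\}$ for $0\le i\le L$, and note that along any geodesic $x_0x_1\cdots x_L$ from $x_0$ to $\bar{x}_0$, equality in Lemma \ref{lem:diam} forces $\kappa(x_i,x_{i+1})=2/L$ to be simultaneously tight for all $i$. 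I would feed this tightness into the curvature formula of Theorem \ref{thm:localstructure} (equivalently the Laplacian or $\ast$-coupling dualities of Theorems \ref{thm:curvature_laplacian} and \ref{thm:curvatureviacoupling}) to force the optimal transport between consecutive neighborhoods into a rigid matching, which controls the number of down-neighbors (in $S_{i-1}$) and up-neighbors (in $S_{i+1}$) of each vertex on a tight geodesic.

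The second step is to promote this local rigidity to global regularity. Because every vertex is a pole, the same tightness holds from every basepoint, so the local intersection data should be independent of the chosen $x_0$. I would argue from this that $G$ is distance-regular with an intersection array $\{b_0,\dots,b_{L-1};c_1,\dots,c_L\}$ whose entries are pinned down, up to finitely many parameters, by $(D,L)$ together with the sharpness constraint $\kappa=2/L$. The quantitative heart here is that curvature sharpness on a tight geodesic translates into affine relations among the $b_i$ and $c_i$; in all the target examples the array is \emph{classical}, so I expect to derive relations of that shape and use them to restrict the feasible arrays to a short list.

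The third step is the Cartesian decomposition. Degree, diameter, and curvature all behave well under the product $\square$: if $G=G_1\square\cdots\square G_k$ then $D=\sum_i D_i$, $L=\sum_i L_i$, and an edge lying in the $j$-th factor satisfies $\kappa_G=\kappa_{G_j}\cdot (D_j/D)$. Imposing $\kappa_G=2/L$ on every edge, together with $\kappa_{G_j}=2/L_j$ for each factor, yields exactly the ratio condition $D_1/L_1=\cdots=D_k/L_k$ of \eqref{eq:cartprod_cond0} (a quick consistency check: $K_2$ has $(D,L)=(1,1)$ and $\kappa=2$, and $Q_n=K_2^{\square n}$ then has $\kappa=2/n$, matching $\mathrm{diam}(Q_n)=n$). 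I would then show that any self-centered Bonnet-Myers sharp graph which is not a single prime factor splits off such a factor, via a canonical-isometric-embedding (unique-factorization) argument for $\square$ adapted to respect the pole grading, thereby reducing to the indecomposable case.

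The final and hardest step is to identify the prime factors. The intersection arrays surviving Steps one and two should leave only finitely many infinite families plus sporadic candidates, which I would match against the classification of distance-regular graphs of the relevant type (geometric distance-regular graphs, or those with prescribed smallest eigenvalue) to recognize them as $K_2$ (producing the hypercubes as products), the cocktail party graphs $CP(n)$, the Johnson graphs $J(2n,n)$, the even demi-cubes $Q_{2n}^{(2)}$, and the Gosset graph. \textbf{The main obstacle is precisely this recognition}: ruling out every other distance-regular graph that shares a feasible array, and in particular certifying the Gosset graph as the unique sporadic solution, which forces reliance on deep classification theorems for distance-regular graphs rather than on the curvature calculus alone. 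A secondary difficulty is making the splitting of Step three fully rigorous while preserving both self-centeredness and edgewise sharpness, since a naive Cartesian factorization need not respect antipodes.
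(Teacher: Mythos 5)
This theorem is quoted from \cite{CKKLMP}; the present paper contains no proof of it, so your attempt can only be measured against the original argument. That argument does not go through distance-regularity at all: Cushing--Kamtue--Koolen--Liu--M\"unch--Peyerimhoff first prove Lichnerowicz sharpness and the rigidity of the distance function as a Laplace eigenfunction (the analogue of Theorem \ref{thm:DLLich}), use the resulting rigidity of optimal transport along geodesics to show that every self-centered Bonnet--Myers sharp graph is \emph{strongly spherical}, and then invoke the pre-existing classification of strongly spherical graphs, which is exactly the list in the statement. The sufficiency direction (that the listed graphs and their ratio-balanced products are sharp) is a curvature computation using the Cartesian-product scaling $\kappa_G = (D_j/D)\,\kappa_{G_j}$ for an edge in the $j$-th factor, which you state correctly, and your derivation of condition \eqref{eq:cartprod_cond0} from it is fine.

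The concrete gap is your second step: the claim that $G$ must be distance-regular is false for the very class being classified. Take $G = \mathrm{CP}(4)\,\square\, J(6,3)$: both factors have $D_i/L_i = 3$, both are vertex-transitive (hence self-centered) and Bonnet--Myers sharp, so $G$ belongs to the classified family; yet a pair at distance $2$ inside the $\mathrm{CP}(4)$ fiber has $c_2 = 6$, inside the Johnson fiber $c_2 = 4$, and a mixed pair (distance $1$ in each coordinate) has $c_2 = 2$, so $G$ is not distance-regular --- not even $c_2$-regular. Thus your plan to pin down an intersection array before factoring cannot get started, and reordering the steps does not rescue it: even for the prime factors you never derive the promised ``affine relations'' among the $b_i, c_i$ from curvature sharpness, and your final recognition step explicitly defers to classification theorems for distance-regular graphs (feasible arrays with classical parameters, etc.) that do not exist in the generality you would need --- a feasible array does not determine a graph, and certifying, say, the Gosset graph this way is precisely the open-ended part. (Your Step three is comparatively harmless: unique prime factorization under $\square$ is classical, degrees are additive so regularity passes to factors, and self-centeredness and diameter behave additively; the hard content was always the prime case, which is where your outline has no working mechanism.) The actual proof sidesteps all of this by extracting the strongly spherical property directly from transport rigidity, which is both weaker than distance-regularity and, unlike it, actually true for the mixed products.
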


Kamtue proved that a $(D,3)$-Bonnet-Myers sharp graph is automatically self-centered (see \cite{Kamtue}). Thus, the condition of being self-centered in Theorem \ref{thm:selfcentered} can be removed for graphs with diameter $3$.

Most known results assume that $G$ is $D$-regular.
The smallest irregular and not-self-centered Bonnet-Myers sharp graph is $P_3$. We have $\kappa_{min}(P_3)=1$ and
diameter $L=2$. 

In this paper, we will omit the conditions of being regular and self-centered. To the best of our knowledge, there is only one other paper that has studied Bonnet-Myers sharp irregular graphs, providing a classification for those with a diameter of $2$ and some necessary conditions for anti-trees being Bonnet-Myers sharp (see \cite{CushingStone}). They showed that a graph $G$ is a diameter $2$ Bonnet-Myers sharp graph if and only if $G$ is a complete graph with a matching removed.

The case for diameter 3 (of irregular Bonnet-Myers sharp graphs)  remains entirely unexplored.
In this paper, we establish the following structural theorem.

\begin{theorem}\label{thm:structural thm}
Suppose $G$ is a Bonnet-Myers sharp irregular graph on $n$ vertices with diameter 3, then we have:
\begin{enumerate}
    \item $G$ has a unique pair of poles, say $x,y$. We have $d_x=d_y=\frac{n-2}{2}$.

    \item  For any vertex $u \in N(y)$ (or $v\in N(x)$), 
    the induced graph $G[N(u) \cap N(x)]$ (or $G[N(v) \cap N(y)]$) is a complete graph with a matching removed, respectively.

    \item There exists two integers $r$ and $t$ satisfying $1\leq t \leq \frac{r}{2}+2$ such that $d_x=d_y=2r+t$ and 
    $d_u=3(r+1)$    
    for all other vertices $u\not =x$ or $y$.
\end{enumerate}
\end{theorem}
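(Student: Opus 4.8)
The plan is to first read off $\kappa_{\min}=\frac23$ from Bonnet-Myers sharpness (since $3=\mathrm{diam}(G)=2/\kappa_{\min}$) and then to extract the rigidity forced by equality in Lemma~\ref{lem:diam}. Fix a pole pair $(x,y)$, so $d(x,y)=3$, and run the proof of the Bonnet-Myers bound with $\alpha$-lazy walks, $\alpha\to1$: chaining the triangle inequality for the transportation metric $W$ along a geodesic $x=v_0,v_1,v_2,v_3=y$ bounds $W(m_x^{\alpha},m_y^{\alpha})$ from above by $\sum_{i=0}^{2}\bigl(1-\kappa_\alpha(v_i,v_{i+1})\bigr)$, while testing the dual formula \eqref{eq:distance-Lipschitz} against the $1$-Lipschitz function $f=d(y,\cdot)$ bounds it from below, using $d(y,v)\ge d(x,y)-d(x,v)=2$ for $v\in N(x)$. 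Comparing the two bounds and inserting $\kappa_{\min}=\frac23$ forces every inequality to be an equality, which I would convert into two facts: (i) every edge lying on some $x$–$y$ geodesic has curvature exactly $\frac23$; and (ii) $\frac1{d_x}\sum_{v\in N(x)}d(y,v)=2$, so that every neighbor of $x$ is at distance exactly $2$ from $y$, and symmetrically every neighbor of $y$ is at distance exactly $2$ from $x$. Since any $w\in N(x)$ then satisfies $d(x,w)+d(w,y)=3$, it lies on an $x$–$y$ geodesic; hence every edge incident to $x$, and symmetrically to $y$, has curvature $\frac23$.

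Next I would feed each of these $\frac23$-edges into the explicit formula of Theorem~\ref{thm:localstructure}. For an edge $xw$ incident to the pole $x$ (so $w\in N(x)$), writing $a=d_x$ and $b=d_w$, the value $\kappa(x,w)=\frac23$ pins the optimal cost to the exact number $\min_\sigma C(\sigma)=\frac{\lcm(a,b)}{3}+\frac{\lcm(a,b)}{\max(a,b)}$. Interpreting the optimal integer coupling as a bijection between the blow-ups of the neighborhoods (as in the discussion after Theorem~\ref{thm:localstructure}), attaining this minimum requires the transport to use distance-$1$ moves almost exclusively, with only a tightly prescribed remainder of distance-$2$ moves and no distance-$3$ moves. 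Reading off which masses are forced to travel distance $1$ shows, for $u\in N(y)$, that $G[N(u)\cap N(x)]$ must be as dense as possible subject to the degree bookkeeping, namely a complete graph with a matching removed -- precisely the diameter-$2$ Bonnet-Myers sharp description of \cite{CushingStone} -- which is part~(2). Tracking the sizes in this rigid coupling (the common-neighborhood counts and the leftover masses) then yields the arithmetic identities: the pole degree $a=2r+t$ and the non-pole degree $b=3(r+1)$, with the range $1\le t\le\frac r2+2$ arising from the constraint that the removed matching be a genuine partial matching; this is part~(3).

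With fact (ii) I have $N(x)\subseteq N_2(y)$ and $N(y)\subseteq N_2(x)$, where $N_i(x)=\{v:d(x,v)=i\}$, and these layers are disjoint; the content of part~(1) is to upgrade this to $N_2(x)=N(y)$ and $N_3(x)=\{y\}$, equivalently $V=\{x\}\sqcup N(x)\sqcup N(y)\sqcup\{y\}$ together with the uniqueness of the pole pair, whence $n-2=d_x+d_y$. I would rule out a second pole $z\in N_3(x)\setminus\{y\}$ or a stray vertex $p\in N_2(x)\setminus N(y)$ by observing that either would create, at some $\frac23$-edge out of $x$, a neighbor whose transport cost exceeds the exact budget computed in the previous step, contradicting $\kappa=\frac23$. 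Finally $d_x=d_y$ would follow from the same coupling rigidity: the optimal bijection at a pole edge pairs the blow-up of $N(x)$ with that of $N(y)$, and balancing the total masses on the two sides forces $|N(x)|=|N(y)|$, i.e.\ $d_x=d_y=\frac{n-2}{2}$.

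The main obstacle is the combinatorial rigidity underlying the second and third paragraphs: converting the single scalar identity $\kappa=\frac23$ (equivalently, the exact value of $\min_\sigma C(\sigma)$) into the precise adjacency pattern. Concretely, one must prove that every optimal integer coupling is supported on distance-$1$ transports except for a rigidly determined remainder, show that any deviation from the complete-minus-matching local picture, from the common non-pole degree $3(r+1)$, or from the four-layer decomposition would strictly raise the coupling cost and hence push the curvature strictly below $\frac23$, and handle the interdependence whereby the degree count, the local structure, and the exclusion of extra poles must be bootstrapped together rather than proved in isolation.
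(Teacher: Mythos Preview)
Your opening is fine: $\kappa_{\min}=\tfrac23$, every geodesic edge has curvature exactly $\tfrac23$, and $f=d(x,\cdot)$ is an optimal Lipschitz witness at each such edge. But after that your plan has real gaps, and in two places the proposed step does not work.

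\textbf{The $d_x=d_y$ argument is wrong as stated.} At a pole edge $xu$ (with $u\in N(x)$), the coupling in Theorem~\ref{thm:localstructure} is between $\mu_x$, supported on $N(x)$, and $\mu_u$, supported on $N(u)\setminus N[x]$; these two masses are \emph{equal by construction} regardless of $|N(y)|$, so ``balancing the total masses on the two sides'' tells you nothing about $|N(x)|$ versus $|N(y)|$. The paper obtains $d_x=d_y$ only \emph{after} proving that all non-pole vertices share a common degree (Lemmas~\ref{lem:middle regular1} and \ref{lem:middle regular2}), whence $d_u^+=d_v^-$ for $u\in N(x)$, $v\in N(y)$, and double-counting the edges between $N(x)$ and $N(y)$ gives $d_x d_u^+=d_y d_v^-$. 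Your outline never establishes the equal-degree step, and it is not a byproduct of the coupling at a single edge.

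\textbf{The layer decomposition needs an independent argument.} Your fact~(ii) only gives $N(x)\subseteq N_2(y)$ and $N(y)\subseteq N_2(x)$; it does not exclude a vertex $p\in N_2(x)\setminus N(y)$ or a second antipode $z\in N_3(x)\setminus\{y\}$. The paper handles this by proving $[x,y]=V(G)$ (Lemma~\ref{lem:interval}) via a perturbation of the distance function on a hypothetical component outside $[x,y]$, which contradicts Lemma~\ref{lem:xi}. Your alternative, ruling such vertices out because they would ``create a neighbor whose transport cost exceeds the exact budget'', is circular: to evaluate that cost you already need to know the structure of $N(u)\setminus N[x]$, which is precisely what is in question.

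\textbf{Item~(2) is not a coupling-rigidity statement.} In the paper it comes from the Lipschitz-perturbation Lemma~\ref{lem:private neighbor}, giving $|(N(x)\cap N(v))\setminus N[u]|\le d_v/d_u=1$ once $d_u=d_v$ is known. To ``read off which masses are forced to travel distance~$1$'' you would need a uniqueness statement for the optimal integer coupling, which you have not proved and which is false in general (several optimal couplings can coexist). The same objection applies to your derivation of the bound $t\le\tfrac r2+2$: the paper extracts it from the existence of a perfect matching in $H_1(x,u)$ (Lemma~\ref{lem:perfect matching}) combined with item~(2), not from rigidity of a single optimal transport.
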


\begin{corollary}
    If $G$ is an irregular Bonnet-Myers sharp graph of diameter $3$, then $G$ has a unique pair of poles.
\end{corollary}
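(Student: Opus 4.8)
The plan is to read the conclusion straight off Theorem~\ref{thm:structural thm}. Under the stated hypotheses---$G$ irregular, Bonnet-Myers sharp, and of diameter $3$---part~(1) of that theorem already asserts the existence of a \emph{unique} pair of poles $x,y$. Since the corollary carries exactly the same hypotheses and its conclusion is precisely the uniqueness clause of part~(1), nothing beyond an appeal to Theorem~\ref{thm:structural thm} is required. I would therefore state the corollary as an immediate consequence of the structural theorem.

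If one wishes to exhibit the mechanism behind the uniqueness in isolation, the cleanest route is to combine parts~(1) and~(3). By part~(3) the two poles satisfy $d_x=d_y=2r+t$ with $1\le t\le \tfrac{r}{2}+2$, whereas every non-pole vertex $u$ has degree $d_u=3(r+1)$. Chaining the bound on $t$ gives
\[
2r+t \;\le\; 2r+\tfrac{r}{2}+2 \;=\; \tfrac{5r}{2}+2 \;<\; 3r+3 \;=\; 3(r+1),
\]
so the pole degree is strictly smaller than the degree of every other vertex. Hence the poles are exactly the vertices of minimum degree in $G$, and since part~(1) supplies exactly two of them, the pole pair is forced to be unique. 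This reframing makes transparent that the \emph{irregularity} is what distinguishes the poles: it is the degree gap $2r+t<3(r+1)$ that singles them out, a feature absent in the regular setting.

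The point to stress is that all of the genuine work has already been done upstream: the corollary is a true consequence of the structural theorem, not an independent statement. Accordingly, the main obstacle lies not in deducing the corollary but in establishing Theorem~\ref{thm:structural thm} itself---in particular, proving that the two poles must share the distinguished degree $\tfrac{n-2}{2}$ and that no third vertex can serve as a pole. Once that structural rigidity is secured, the uniqueness of the pole pair, and hence the corollary, follows immediately.
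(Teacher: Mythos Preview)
Your proposal is correct and matches the paper's treatment: the corollary is stated immediately after Theorem~\ref{thm:structural thm} with no separate proof, since its conclusion is literally the uniqueness clause of part~(1). Your optional second paragraph unpacking the degree gap $2r+t<3(r+1)$ is a nice gloss, though note it still leans on part~(1) to know there are exactly two minimum-degree vertices, so it is explanatory rather than an independent argument.
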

We have constructed an infinite number of Bonnet-Myers sharp irregular graphs with a diameter of 3. Specifically, there exists a Bonnet-Myers sharp graph of diameter $3$ for $t=1,2$ and all $r\geq 1$.
(We suspect that $t$ is limited to a few specific values. Determining all possible pairs of $(r,t)$ appears to be a challenging problem).
On the contrast, there are only four
Bonnet-Myers sharp regular graphs of diameter $3$: the hypercube $Q_3$, the Johnson
graph $J(6, 3)$, the demi-cubes $Q_6^{(2)}$, and the Gosset graph.

We additionally prove the following result concerning $C_3$-free Bonnet-Myers sharp graphs.
\begin{theorem}\label{thm:hypercube}
Suppose $G$ is a $C_3$-free Bonnet-Myers sharp graph with diameter $L$. If $G$ contains a pole $x$ with $d_x>\frac{2}{3}L$, then $G$ must be the hypercube $Q_L$.
\end{theorem}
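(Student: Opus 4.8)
The plan is to use the cost formula \eqref{eq:formula} to pin down the local degrees, then exploit the sharpness of Bonnet--Myers along a geodesic, and finally let the degree hypothesis rigidify the neighbourhood of the pole until the graph is forced to be $Q_L$. I would first extract from Theorem \ref{thm:localstructure} the basic triangle-free bound. For an edge $xy$ with $d_x\le d_y$, the absence of triangles gives $N(x)\cap N(y)=\emptyset$, so $N(x)\cap N[y]=\{y\}$; hence $\mu_x$ and $\mu_y$ have common total mass $\lcm(d_x,d_y)-c_y$, and as every unit of mass travels distance at least $1$ we get $\min_\sigma C(\sigma)\ge \lcm(d_x,d_y)-c_y$. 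Substituting into \eqref{eq:formula} and using $c_y/\lcm(d_x,d_y)=1/d_y$ yields
\[
\kappa(x,y)\le \frac{2}{\max(d_x,d_y)},
\]
with equality exactly when some optimal coupling moves every unit a distance exactly $1$. Since $G$ is Bonnet--Myers sharp, $\kappa(x,y)\ge\kappa_{\min}=2/L$ on every edge, so $\max(d_x,d_y)\le L$ and therefore every vertex of $G$ has degree at most $L$.

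Next I would establish rigidity at the pole. Fix the pole $x$ and an antipode $\overline{x}$ with $d(x,\overline{x})=L$. Testing the Kantorovich dual of $W(m_x^{\alpha},m_{\overline{x}}^{\alpha})$ against the $1$-Lipschitz function $d(\overline{x},\cdot)$ and letting $\alpha\to1$ forces every neighbour of $x$ to sit at distance $L-1$ from $\overline{x}$; in particular each neighbour of $x$ lies on an $x$--$\overline{x}$ geodesic, and symmetrically for $\overline{x}$. Now set $\phi=d(x,\cdot)$. For an edge $ab$ with $\phi(b)-\phi(a)=1$, Theorem \ref{thm:curvature_laplacian} gives $\kappa(a,b)\le \Delta\phi(a)-\Delta\phi(b)$ because $\phi\in Lip(1)$. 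Summing along a geodesic $x=v_0,\dots,v_L=\overline{x}$ telescopes to
\[
\sum_{i=0}^{L-1}\kappa(v_i,v_{i+1})\le \Delta\phi(x)-\Delta\phi(\overline{x})=1-(-1)=2,
\]
while the left side is at least $L\cdot(2/L)=2$; hence every geodesic edge has curvature exactly $2/L$ and each inequality above is an equality. Evaluating the equality on the first edge and using triangle-freeness (so every neighbour of $v_1$ other than $x$ lies at distance $2$ from $x$, giving $\Delta\phi(v_1)=1-2/d_{v_1}$) yields $\kappa(x,v_1)=1-(1-2/d_{v_1})=2/d_{v_1}$, whence $d_{v_1}=L$. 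As every neighbour of $x$ is such a $v_1$, every neighbour of $x$ has degree exactly $L$.

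For $u\in N(x)$ we then have $\max(d_x,d_u)=L$, so the bound of the first paragraph is attained: $\kappa(x,u)=2/L$ is realised by a coupling moving every unit of mass a distance exactly $1$. Reading this coupling as a feasible transportation between the blow-ups and applying Hall's (equivalently max-flow--min-cut) condition to a single source $a\in N(x)\setminus\{u\}$ of mass $c_x$, whose only distance-$1$ sinks are the copies of $N(a)\cap N(u)\setminus\{x\}$, forces
\[
\bigl|N(a)\cap N(u)\setminus\{x\}\bigr|\ \ge\ \frac{c_x}{c_u}=\frac{L}{d_x}.
\]
Thus any two neighbours of $x$ share at least $\lceil L/d_x\rceil$ common neighbours in the second layer $D_2=\{v:d(x,v)=2\}$. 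This is where the hypothesis enters: when $d_x>\tfrac{2}{3}L$ one has $1\le L/d_x<\tfrac{3}{2}$, so the forced count is $1$ if $d_x=L$ and $2$ if $d_x<L$, with nothing intermediate possible. Combining this lower bound with the exact edge count $\sum_{w\in D_2}\lvert N(w)\cap N(x)\rvert=d_x(L-1)$ between $N(x)$ and $D_2$, a double count is then meant to exclude the ``two common neighbours per pair'' alternative, forcing $\lceil L/d_x\rceil=1$ and hence $d_x=L$, so that every pair of neighbours of $x$ has exactly one common neighbour and the $C_4$-pattern of the hypercube appears on $\{x\}\cup N(x)\cup D_2$.

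From $d_x=L$ and the one-common-neighbour condition I would reconstruct $Q_L$ by induction on the distance layers $D_i=\{v:d(x,v)=i\}$, identifying $N(x)$ with the $L$ coordinate directions and $D_2$ with the $\binom{L}{2}$ two-element sets, and then propagating the all-distance-$1$ couplings (now available at every degree-$L$ vertex) outward to force $|D_i|=\binom{L}{i}$, the hypercube adjacencies, and bipartiteness. I expect the main obstacle to be exactly this global step: upgrading the local, pole-centred rigidity into a consistent labelling of all of $V(G)$ by $\{0,1\}^L$, i.e.\ showing that the successive layer matchings glue together without collisions, and in particular closing the double count in the previous paragraph that rules out $d_x<L$. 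The threshold $\tfrac{2}{3}L$ is the quantitative hinge that makes the Hall constraints at the pole rigid enough to start the induction; carrying that rigidity through the higher layers is where the genuine difficulty lies.
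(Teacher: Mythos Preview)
Your outline is close to the paper's and the high-level strategy is sound: bound degrees by $L$, show every $u\in N(x)$ has $d_u=L$ and that $H_1(x,u)$ has a perfect matching, use this to force $d_x=L$, then rebuild $Q_L$ layer by layer. The genuine gap is precisely the one you flag: the double count meant to exclude $d_x<L$ is not closed. Your Hall argument on a singleton $\{a\}\subseteq N(x)\setminus\{u\}$ correctly yields $|N(a)\cap N(u)\cap D_2|\ge\lceil L/d_x\rceil$, but this lower bound alone, together with the edge count $\sum_{w\in D_2}d_w^-=d_x(L-1)$, does not give a contradiction; you also need the upper bound $d_w^-\le 2$ for every $w\in D_2$. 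That bound follows from two facts you have almost, but not quite, assembled: (i) \emph{every} vertex of $G$ lies on an $x$--$\overline{x}$ geodesic (the paper's Lemma~\ref{lem:interval}), so the telescoping equality $\Delta\phi(w)=1-2d(x,w)/L$ holds at all $w$, not only along a chosen geodesic; and (ii) combining this with $d_w\le L$ gives $2d_w^-+d_w^0=(4/L)d_w\le 4$, hence $d_w^-\le 2$. Once you have $d_w^-\le 2$, your count closes: with $A=|\{w\in D_2:d_w^-=2\}|$ one gets $A=\sum_w\binom{d_w^-}{2}\ge 2\binom{d_x}{2}$ from the pair bound and $2A\le\sum_w d_w^-=d_x(L-1)$, whence $d_x\le(L+1)/2$, contradicting $d_x>2L/3$ for $L\ge 4$ (and for $L\le 3$ the hypothesis already forces $d_x\ge L$).

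For comparison, the paper organises this step from the $D_2$ side rather than the $N(x)$ side (Lemma~\ref{lem:N2bijective}): first it uses $d_x>L/2$ and the perfect matching in $H_1(x,v)$ to show $d_w^-=2$ for all $w\in D_2$, then it uses $d_x>2L/3$ and the Hall condition applied to a \emph{pair} $\{v_1,v_2\}\subseteq N(x)$ to show the map $w\mapsto N(w)\cap N(x)$ is injective; this gives $|D_2|\le\binom{d_x}{2}$ and hence $d_x=L$ directly from the edge count. Your singleton Hall argument is a legitimate variant, but both routes need the $d_w^-\le 2$ input. The inductive reconstruction of $Q_L$ is then essentially as you sketch; the paper carries it out by repeatedly invoking the distance-$1$ perfect matchings at degree-$L$ vertices (with Corollary~\ref{cor:matchingdirection} to orient them) to prove $d_v^-=i$ for $v\in N_i(x)$ and to exclude ``butterfly'' $C_4$'s between consecutive layers, which is what makes the subset labelling consistent.
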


When $d_x\leq\frac{2}{3}L$,  Bonnet-Myers sharp irregular graphs may exist. See examples in Table \ref{table1}.

The remainder of the paper is organized as follows: In Section 2, we establish a theorem on the existence of integer-valued optimal couplings and a lemma on complementary slackness. In Section 3, we present the proof of Theorem 4. Section 4 is dedicated to proving several lemmas for Bonnet-Myers sharp graphs. In Section 5, we prove Theorem \ref{thm:structural thm}  and also construct many diameter 3 Bonnet-Myers-sharp irregular graphs. Finally, in the last section, we examine the $C_3$-free Bonnet-Myers-sharp graphs.

\section{Notation and Lemmas on Ricci Curvature}
A {\em mass distribution} (over the vertex set $V=V(G)$) is a mapping $m: V\to [0,\infty)$. The {\em total mass} of $m$ is
$\|m \|_1=\sum_{v\in V}m(v)$. A probability distribution can be viewed as a mass distribution with a total mass of 1. The concepts of coupling 
(Equation \eqref{eq:coupling})
and transportation distance (Equations \eqref{eq:distance-coupling} and  \eqref{eq:distance-Lipschitz}) can be easily extended to two mass distributions with equal total mass.

For any positive constant $c$, let
$cm$ (or $cA$) be the distribution
(or coupling) scaled by a factor of $c$.  It is straightforward to verify
\begin{equation}\label{eq:scale}
    W(cm_1,cm_2)=cW(m_1,m_2)
\end{equation}

We first prove a lemma on the existence of integer-valued optimal coupling.

\begin{lemma}\label{lem:integerA}
If $m_1$ and $m_2$ are both integer-valued mass distributions (with finite supports), then there exists an integer-valued coupling (with finite supports) between $m_1$ and $m_2$, $A: V \times V \to \mathbb{Z}$ such that $A$ is an optimal solution of Equation \eqref{eq:distance-coupling}.
\end{lemma}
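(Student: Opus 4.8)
The plan is to recognize Equation~\eqref{eq:distance-coupling} as a finite linear program whose constraint matrix is totally unimodular, so that integrality of the data forces an integral optimal vertex. First I would observe that the problem is genuinely finite: any coupling $A$ between $m_1$ and $m_2$ must be supported on $\mathrm{supp}(m_1)\times\mathrm{supp}(m_2)$, because $A(x,y)\ge 0$ together with $\sum_{y}A(x,y)=m_1(x)$ forces $A(x,\cdot)\equiv 0$ whenever $m_1(x)=0$, and symmetrically for the columns. Since $m_1,m_2$ have finite support, the feasible set is a bounded polytope in $\mathbb{R}^{S_1\times S_2}$ (with $S_i=\mathrm{supp}(m_i)$), and it is nonempty since the equal total mass $\|m_1\|_1=\|m_2\|_1$ admits at least the product coupling $A(x,y)=m_1(x)m_2(y)/\|m_1\|_1$. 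A linear objective over a nonempty bounded polytope attains its minimum at a vertex, so it suffices to show every vertex is integer-valued.

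For the integrality step, the equality constraints in \eqref{eq:coupling} have coefficient matrix equal to the node--arc incidence matrix of the complete bipartite graph on $S_1\sqcup S_2$, which is totally unimodular; combined with the integral right-hand side $(m_1(x))_x,(m_2(y))_y$, every basic feasible solution is integral. Hence the vertex minimizer found above is an integer-valued optimal coupling of finite support, which proves the lemma.

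Alternatively, and to keep the proof self-contained, I would give a direct cycle-cancellation argument. Starting from any optimal coupling $A$ (which exists by the compactness observation above), suppose it has a fractional entry, and form the bipartite graph $H$ on $S_1\sqcup S_2$ whose edges are the pairs $(x,y)$ with $A(x,y)\notin\mathbb{Z}$. No vertex of $H$ can have degree exactly $1$: if a row $x$ met a single fractional entry $A(x,y)$, then $A(x,y)=m_1(x)-\sum_{y'\ne y}A(x,y')$ would be an integer, a contradiction, and the same holds for columns. Thus every non-isolated vertex of $H$ has degree $\ge 2$, so $H$ contains a cycle $C$. Perturbing $A$ by alternately adding and subtracting $\epsilon$ along $C$ preserves all marginals, and optimality forces the resulting linear cost change to vanish, since otherwise one of the two signs of $\epsilon$ strictly lowers the cost. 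I may therefore push $\epsilon$ in either direction at no cost until the first entry along $C$ reaches an integer, which strictly decreases the number of fractional entries. Iterating finitely many times yields an integer-valued optimal coupling.

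The main obstacle is verifying the feasibility and termination details of this direct route: that the perturbation stays within the polytope throughout the push (nonnegativity of every cycle entry, which holds because fractional entries are strictly positive and we stop at the nearest integer), and that the process terminates (each step eliminates at least one fractional entry and creates none, as only cycle entries change). If instead I appeal to total unimodularity, the work shifts to justifying that the transportation constraint matrix is totally unimodular, but this is classical.
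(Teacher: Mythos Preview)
Your proposal is correct. Your second, self-contained route---the cycle-cancellation argument on the bipartite graph of fractional entries---is precisely the paper's proof: the paper also forms the bipartite graph on $\mathrm{supp}(m_1)\sqcup\mathrm{supp}(m_2)$ with edges at non-integer entries, observes minimum degree $\ge 2$, finds an even cycle, orients it so that the alternating perturbation does not increase cost (hence, by optimality, leaves cost unchanged), and pushes until some entry hits an integer, strictly reducing the number of fractional entries.

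Your first route via total unimodularity of the transportation constraint matrix is a genuinely different, more structural argument. It is shorter once one is willing to invoke the classical facts (the node--edge incidence matrix of a bipartite graph is TU, and an LP with TU constraint matrix and integral right-hand side has integral vertices), and it immediately gives the stronger statement that \emph{every} vertex of the transportation polytope is integral. The paper's hands-on cycle argument trades this generality for self-containment: it needs no outside LP machinery and exposes exactly the combinatorial mechanism (alternating along a cycle) that drives integrality.
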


\begin{proof}
Let $A_0$ be an optimal solution of Equation \ref{eq:distance-coupling}.  Suppose $A_0$ is not integer-valued. Let $L=\{u \in V: m_1(u)>0\}$ and $R=\{v \in V: m_2(v)>0\}$. We consider a bipartite graph $H_0:=H(A_0)$ with vertex bi-partition $L \sqcup R$ (the disjoint union of $L$ and $R$, i.e., common vertices are duplicated), and the edge set 
\[E(H_0)=\{(u,v):  \mbox{and} \ A_0(u,v) \notin \mathbb{Z}\}.\]

Since $m_1$ and $m_2$ are both integer-valued, the minimum degree of $H_0$ is at least $2$ so $H_0$ contains a cycle $C$. 
Since $H_0$ is bipartite, this cycle $C$  must have even length, say $2k$. Let  $u_1,v_1,\ldots,u_k,v_k$ 
be the vertices of $C$ under a cyclic orientation. 

Here $u_1,u_2,\ldots, u_k\in L$ while $v_1,v_2,\ldots,v_k\in R$. (For convenience, let $u_{k+1}=u_1$ and $v_{k+1}=v_1$.)
Without loss of generality, we can assume
\begin{equation} \label{eq:oritation}
\sum_{i=1}^k d(u_i,v_i)\leq \sum_{i=1}^k d(u_{i+1},v_i).
\end{equation}
(Otherwise, simply reverse the orientation of $C$
and relabel its vertices.)

For $1\leq i\leq k$, define 
\begin{align*}
   \epsilon_i &=\lceil A_0(u_i,v_i)\rceil - A_0(u_i,v_i), \\
   \epsilon'_i &= A_0(u_{i+1},v_i) - \lfloor A_0(u_{i+1},v_i)\rfloor. 
\end{align*}
Here $\lceil x \rceil$ represents the ceil of $x$, which is the smallest integer that is greater than or equal to $x$. Similarly,
$\lfloor x\rfloor$ represents the floor of $x$, which is the greatest integer that is less than or equal to $x$.

Let $\epsilon_0$ be the minimum value among $\epsilon_1,\ldots, \epsilon_k, \epsilon'_1,\ldots, \epsilon'_k$.
By the definition of $H_0$, we have $\epsilon_0>0$.

Now we define a new coupling $A_1$:
\[A_1(u, v)=
\begin{cases}
    A_0(u, v)+\epsilon_0 & \mbox{if} \ (u,v)=(u_i, v_{i}) \in E(C);\\
    A_0(u, v)-\epsilon_0 & \mbox{if}\ (u,v)=(u_{i+1}, v_i) \in E(C);\\
    A_0(u, v) & \mbox{if}\ (u,v) \notin E(C).\\
\end{cases}\]
Observe that all entries of $A_1$ are still non-negative. In addition, we have
\begin{align*}
    \sum_{v}A_1(u,v)=\sum_v A_0(u,v)=m_1(u),\\
    \sum_{u}A_1(u,v)=\sum_u A_0(u,v)=m_2(v).
\end{align*}
Therefore $A_1$ is still a coupling between $m_1$ and $m_2$. Now we have
\begin{align*}
C(A_1)&=\sum_{x,y\in V} A_1(x,y) d(x,y)\\
&=\sum_{x,y\in V} A_0(x,y) d(x,y) + \epsilon_0 \sum_{i=1}^k d(u_i,v_i) - \epsilon_0 \sum_{i=1}^k d(u_{i+1},v_i)\\
&=C(A_0) + \epsilon_0 \left( \sum_{i=1}^k d(u_i,v_i) - \sum_{i=1}^k d(u_{i+1},v_i)\right)\\
&\leq C(A_0).
\end{align*}
Since $A_0$ is optimal, we must have $C(A_1)=C(A_0)$. Thus, $A_1$ is also optimal.
If some values of $A_1(u,v)$ are not integers, we can repeat this process to define a new bipartite graph $H_1=H(A_1)$. Note that $H_1$ will have at least one fewer edge than $H_0$. This process will terminate after a finite number of iterations, ultimately yielding an integer-valued optimal coupling $A$.
\end{proof}

Note that Equations \eqref{eq:curv_laplacian} and \eqref{eq:curv_coupling} are dual to each other.
We have the following lemma, which is the special case of the complementary slackness theorem in the linear programming.
\begin{lemma}\label{lem:coupling}
Suppose $f$ is an optimal solution of Equation \eqref{eq:curv_laplacian} and $B$ is an optimal solution of Equation \eqref{eq:curv_coupling}. Then
we have
\begin{enumerate}
    \item If $f(v)-f(u)\not =d(u,v)$, then $B(u,v)=0$.
    \item If $B(u,v)\not =0$, then $f(v)-f(u)=d(u,v)$.
\end{enumerate}
\end{lemma}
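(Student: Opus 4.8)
The plan is to invoke the complementary slackness theorem from linear programming, since the statement is precisely its instantiation for the primal-dual pair formed by Equations \eqref{eq:curv_laplacian} and \eqref{eq:curv_coupling}. First I would set up both optimization problems explicitly as a matched primal/dual pair over the variables $B(u,v)$ (the $\ast$-coupling, subject to constraints 1--4) and $f$ (the $1$-Lipschitz function with $\nabla_{yx}f=1$). The key observation is that each constraint $f(v)-f(u)\le d(u,v)$ coming from the $1$-Lipschitz condition on $f$ is dual to the nonnegativity-type constraint on the corresponding coupling variable $B(u,v)$, and the objective $\sum_{u,v}B(u,v)d(u,v)$ pairs against the slack $d(u,v)-(f(v)-f(u))$ in exactly the way complementary slackness requires.

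Second, I would establish the single identity that does all the work. Since $f$ is $1$-Lipschitz we have $f(v)-f(u)\le d(u,v)$ for every pair $(u,v)$, so $d(u,v)-(f(v)-f(u))\ge 0$ everywhere. Using the defining constraints of a $\ast$-coupling (the marginal conditions 3 and 4 together with the zero-sum condition 2), one rewrites the objective value $\sum_{u,v}B(u,v)d(u,v)$ against $\sum_{u,v}B(u,v)\big(f(v)-f(u)\big)$; the latter collapses, via the marginals, to a fixed quantity determined only by $f$ on $x$ and $y$ (essentially $d(x,y)\,\kappa(x,y)$ after normalization). Because $f$ and $B$ are both optimal, strong duality forces the two objective values to coincide, so
\begin{equation*}
\sum_{u,v\in V} B(u,v)\Big(d(u,v)-\big(f(v)-f(u)\big)\Big)=0.
\end{equation*}

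Third, I would extract the two claims termwise. Every summand has the form $B(u,v)\cdot\big(d(u,v)-(f(v)-f(u))\big)$. The Lipschitz bound makes the second factor nonnegative; however the first factor $B(u,v)$ is nonpositive for all pairs except $(x,y)$ by condition 1 of the $\ast$-coupling definition, so the sign pattern is mixed rather than uniform. The clean way around this is to note that by optimality $f$ may be assumed to satisfy $\nabla_{xy}\Delta f=\kappa(x,y)$ and, using Lemma \ref{lem:coupling}'s own hypotheses, that the Lipschitz constraint is tight at $(x,y)$ (indeed $\nabla_{yx}f=1$ forces $f(x)-f(y)=d(x,y)$, i.e.\ $d(x,y)-(f(y)-f(x))=0$ there, where $B$ is positive). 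Hence the $(x,y)$ term contributes zero to the sum, and what remains is a sum of products in which each $-B(u,v)\ge 0$ is multiplied by a nonnegative slack; a sum of nonnegative terms equal to zero must have every term zero. Thus $B(u,v)\big(d(u,v)-(f(v)-f(u))\big)=0$ for each $(u,v)$, which immediately yields both statements: if $f(v)-f(u)\neq d(u,v)$ the slack is strictly positive so $B(u,v)=0$, and conversely $B(u,v)\neq 0$ forces the slack to vanish, i.e.\ $f(v)-f(u)=d(u,v)$.

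The main obstacle I anticipate is handling the sign bookkeeping carefully at the distinguished pair $(x,y)$, since the $\ast$-coupling $B$ is sign-definite only away from $(x,y)$ and the marginal conditions 3--4 are stated with exceptions at $x$ and $y$. The delicate point is to verify that the exceptional $(x,y)$ term genuinely drops out (its slack is zero by tightness of the Lipschitz constraint along the optimal $f$), so that the remaining sum is honestly a sum of nonnegative contributions; once that is confirmed, the termwise vanishing is immediate. I would therefore spend most of the writeup making the reduction to nonnegative terms rigorous rather than on the LP-duality machinery, which is standard.
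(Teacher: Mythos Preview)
Your proposal is correct and follows essentially the same route as the paper. The paper carries out exactly the computation you outline: it rewrites $\Delta f(x)-\Delta f(y)$ via the $\ast$-coupling marginals as $\sum_{u,v}B(u,v)(f(v)-f(u))$, separates the $(x,y)$ term (whose slack vanishes because $\nabla_{yx}f=1$), bounds the remaining terms using $B(u,v)\le 0$ and $f(v)-f(u)\le d(u,v)$, and then uses optimality of both $f$ and $B$ to force equality termwise---precisely your complementary-slackness argument, presented as a direct chain of (in)equalities rather than in LP language.
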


\begin{proof}
    We have
    \begin{align*}
        \kappa(x,y) &= \nabla_{xy} \Delta f \\
        &=\Delta f(x)-\Delta f(y)\\
        &= \sum_{u}\mu_x(u) (f(u)-f(x)) -\sum_{v}\mu_y(v) (f(v)-f(y))\\
        &=\sum_{u,v} B(u,v) (f(x)-f(u)) -  \sum_{u,v} B(u,v) (f(y)-f(v))\\
        &=\sum_{u,v} B(u,v) (f(x)-f(y)) + \sum_{u,v} B(u,v) (f(v)-f(u)) \\
        &= \sum_{u,v} B(u,v) (f(v)-f(u))\\
        &= B(x,y) + \sum_{(u,v)\not=(x,y)} B(u,v) (f(v)-f(u))\\
        &\geq B(x,y) + \sum_{(u,v)\not=(x,y)} B(u,v) d(u,v)\\
        &= \sum_{u,v} B(u,v) d(u,v)\\
        &=\kappa(x,y).    
    \end{align*}

The equality holds if and only if for any pair of vertices $(u,v)\not=(x,y)$,
$$B(u,v)(f(v)-f(u)-d(u,v))=0.$$
Thus, the assertion holds for all $(u,v)\not=(x,y)$.
On the other hand, the assertion is trivial for $(u,v)=(x,y)$. 
\end{proof}

\begin{lemma}\label{lem:duality2}
Suppose $f$ is an optimal solution of Equation \eqref{eq:curv_laplacian} and $\sigma$ is an optimal solution of Equation \eqref{eq:formula}. Then for any pair of distinct vertices
$(u,v)\not =(x,y)$,
we have
\begin{enumerate}
    \item If $f(v)-f(u)\not =d(u,v)$, then $\sigma(u,v)=0$.
    \item If $\sigma(u,v)\not =0$, then $f(v)-f(u)=d(u,v)$.
\end{enumerate}
\end{lemma}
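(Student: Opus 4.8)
The plan is to prove Lemma~\ref{lem:duality2} by imitating the complementary-slackness argument of Lemma~\ref{lem:coupling}, except that the optimal Lipschitz function $f$ is now paired with the optimal coupling $\sigma$ between the reduced mass distributions $\mu_x$ and $\mu_y$ rather than with a $\ast$-coupling. Since $xy\in E(G)$ we have $d(x,y)=1$, and the constraint $\nabla_{yx}f=1$ gives $f(y)-f(x)=1$; it is convenient to set $g(u):=f(u)-f(x)$, so that $g(x)=0$ and $g(y)=1$.

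The first and main step is to re-express the curvature, computed via Theorem~\ref{thm:curvature_laplacian}, in terms of $\mu_x$ and $\mu_y$. Starting from $\kappa(x,y)=\Delta f(x)-\Delta f(y)$ and multiplying through by $\lcm(d_x,d_y)=c_xd_x=c_yd_y$, I would obtain
\[\lcm(d_x,d_y)\,\kappa(x,y)=c_x\sum_{u\in N(x)}g(u)-c_y\sum_{v\in N(y)}g(v)+\lcm(d_x,d_y).\]
The crux is to check that the contributions of the common neighbourhood $N(x)\cap N(y)$ and of the boundary vertices $x,y$ cancel in just the right way so that the full neighbourhoods may be replaced by the reduced distributions. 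Concretely, because $\mu_x$ subtracts exactly $c_y$ on $N(x)\cap N[y]$ and $\mu_y$ is supported precisely on $N(y)\setminus N[x]$, this collapses (using $g(x)=0$, $g(y)=1$) to
\[\lcm(d_x,d_y)\,\kappa(x,y)=\sum_{u}\mu_x(u)g(u)-\sum_{v}\mu_y(v)g(v)+\lcm(d_x,d_y)+c_y.\]
Rearranging and using $c_y=\lcm(d_x,d_y)/d_y$ together with the formula \eqref{eq:formula} of Theorem~\ref{thm:localstructure} yields the identity
\[\sum_{u}\mu_x(u)g(u)-\sum_{v}\mu_y(v)g(v)=\lcm(d_x,d_y)\Bigl(\kappa(x,y)-1-\tfrac{1}{d_y}\Bigr)=-\min_{\sigma'}C(\sigma').\]

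In the second step I would pass to the coupling and run the sign argument. Since $\sigma$ has marginals $\mu_x$ and $\mu_y$, the left-hand side above rewrites as $-\sum_{u,v}\sigma(u,v)\bigl(g(v)-g(u)\bigr)=-\sum_{u,v}\sigma(u,v)\bigl(f(v)-f(u)\bigr)$, and this value in fact depends only on the marginals, not on the chosen coupling. Hence $\sum_{u,v}\sigma(u,v)(f(v)-f(u))=\min_{\sigma'}C(\sigma')$. For the \emph{optimal} $\sigma$ one also has $\sum_{u,v}\sigma(u,v)d(u,v)=\min_{\sigma'}C(\sigma')$, so subtracting gives
\[\sum_{u,v}\sigma(u,v)\bigl(d(u,v)-(f(v)-f(u))\bigr)=0.\]
Each summand is the product of $\sigma(u,v)\ge 0$ with $d(u,v)-(f(v)-f(u))\ge 0$ (the latter because $f\in Lip(1)$), so every summand must vanish; that is, $\sigma(u,v)\bigl(f(v)-f(u)-d(u,v)\bigr)=0$ for all $(u,v)$, which is precisely the two assertions.

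The one delicate point is the bookkeeping in the first step: I must verify that the common-neighbour mass $c_x-c_y$ and the roles of the poles $x,y$ are exactly compatible with the definitions of $\mu_x$ and $\mu_y$, so that $\Delta f(x)-\Delta f(y)$ is reproduced up to the explicit constants $\lcm(d_x,d_y)$ and $c_y$. Once that identity is secured, everything else is the routine complementary-slackness inequality. I should also note that the excluded pair $(x,y)$ needs no separate treatment: since $x\notin N(x)$ we have $\mu_x(x)=0$, whence $\sigma(x,\cdot)=0$, and in any case $f(y)-f(x)=1=d(x,y)$.
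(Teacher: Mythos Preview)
Your proof is correct, but it takes a different route from the paper's. The paper does not work directly with $\mu_x$ and $\mu_y$; instead it constructs from $\sigma$ an explicit $\ast$-coupling $B_\sigma$ between $m_x^0$ and $m_y^0$ (setting $B_\sigma(x,y)=1+1/d_y$, $B_\sigma(u,u)=-1/d_y$ on $N[x]\cap N[y]$, and $B_\sigma(u,v)=-\sigma(u,v)/\lcm(d_x,d_y)$ elsewhere), verifies the four $\ast$-coupling axioms, observes that $\sigma(u,v)>0$ iff $B_\sigma(u,v)\neq 0$ for $(u,v)\neq(x,y)$ with $u\neq v$, and then simply invokes Lemma~\ref{lem:coupling}. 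Your approach instead unpacks $\Delta f(x)-\Delta f(y)$ directly into $\sum_u\mu_x(u)g(u)-\sum_v\mu_y(v)g(v)$ plus the constant $\lcm(d_x,d_y)+c_y$, and runs the nonnegative-summand argument on $\sigma$ itself. What you gain is independence from the $\ast$-coupling machinery and from Lemma~\ref{lem:coupling}: your argument is self-contained once the curvature formula~\eqref{eq:formula} is granted, and the bookkeeping you flag as ``delicate'' is exactly the computation showing that the common-neighbour corrections match. What the paper's approach buys is modularity: once $B_\sigma$ is shown to be a (necessarily optimal) $\ast$-coupling, the slackness conclusion is immediate from the already-proved Lemma~\ref{lem:coupling}, and the construction of $B_\sigma$ is reusable elsewhere.
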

\begin{proof}

We define a $\ast$-coupling $B_\sigma$ between $m_x^0$ and $m_y^0$ as follows.
For any $u,v\in V(G)$, we have
\begin{equation} \label{equ:Bcoupling}
B_\sigma(u,v)=
\begin{cases}
1+\frac{1}{d_y} & \mbox{ if } (u,v)=(x,y);\\
-\frac{1}{d_y} & \mbox{ if } u=v \in N[x] \cap N[y];\\
- \frac{\sigma(u,v)}{\lcm(d_x,d_y)} & \mbox{ if } (u,v)\in N[x] \times (N(y) \setminus N[x]);\\
0 & \mbox{ otherwise.}
\end{cases}
\end{equation}

Now we verify that $B_\sigma$ is a $\ast$-coupling between $m^0_x$ and $m^0_y$ by checking the four conditions in its definition. Condition (1) is satisfied by the definition.
For Condition (2), we have
\begin{align*}
   \sum\limits_{u,v\in V(G)} B(u,v) &= 1+\frac{1}{d_y} -\frac{|N[x]\cap N[y]|}{d_y}
   -\frac{\sum_{u,v}\sigma(u,v)}{\lcm(d_x,d_y)}\\
   &= 1+\frac{1}{d_y} -\frac{|N(x)\cap N(y)|+2}{d_y}
   -\frac{\|\mu_y\|_1}{\lcm(d_x,d_y)}\\
   &=1+\frac{1}{d_y} -\frac{|N(x)\cap N(y)|+2}{d_y}
   -\frac{(d_y-1-|N(x)\cap N(y)|)c_y}{\lcm(d_x,d_y)}\\
   &=0.
\end{align*}
Both sides of the equation in Condition (3) are zeros unless 
$u\in N[x]$. Since $u\not=x$,  we may assume $u\in N(x)$.
We have
\begin{align*}
    \sum\limits_{v \in V(G)} B(u, v)
    &= -\frac{\mathbf{1}_{u\in N(x)\cap N[y]}}{d_y} -
    \frac{\sum\limits_{v \in N(y)\setminus N[x] } \sigma(u,v)}{\lcm(d_x,d_y)}\\
    &= -\frac{c_y \mathbf{1}_{u\in N(x)\cap N[y]}}{\lcm(d_x,d_y)} - \frac{\mu_x(u)}{\lcm(d_x,d_y)}\\
    &= -\frac{c_x}{\lcm(d_x,d_y)}\\
    &=-\frac{1}{d_x}\\
    &=-m_x^0(u).
\end{align*}
Now we verify Condition (4). Similarly, we may assume $v\in N(y)$. We divide it into two cases. Case 1: $v\in N(y)\cap N[x]$. We have
\begin{align*}
    \sum\limits_{u \in V(G)} B(u, v)
    &= - \frac{1}{d_y}\\
    &=-m^0_y(v).
\end{align*}
Case 2: $v\in N(y)\setminus N[x]$. We have
\begin{align*}
    \sum\limits_{u \in V(G)} B(u, v)
    &= -    \frac{\sum\limits_{u \in N[x] } \sigma(u,v)}{\lcm(d_x,d_y)}\\
    &=  - \frac{\mu_y(v)}{\lcm(d_x,d_y)}\\
    &= -\frac{c_y}{\lcm(d_x,d_y)}\\
    &=-\frac{1}{d_y}\\
    &=-m_y^0(u).
\end{align*}
All four conditions in the definition of $\ast$-coupling are verified.

From our construction, we observe that for $u\not =v$ and $(u,v)\not=(x,y)$ we have $\sigma(u,v)>0$ if and only if $B_\sigma(u,v)\not =0$. The conclusion follows from Lemma \ref{lem:coupling}.
\end{proof}

\section{Proof of Theorem \ref{thm:localstructure}}
Before we prove Theorem 
\ref{thm:localstructure}, we would like to give an structural interpretation.

Assume $d_x\leq d_y$. For any $u\in N(x)\cup N(y)$, we define a blowup set $S_u$ (depending on the vertex $u$) as follows:
\[S_u=
\begin{cases}
\emptyset &\mbox{ if } u=x;\\
c_x-c_y \mbox { copies of } u & \mbox{ if }  u\in N(x)\cap N[y]\\
c_x \mbox { copies of } u & \mbox{ if } u\in N(x)\setminus N[y];\\
c_y \mbox { copies of } u & \mbox{ if } u\in N(y)\setminus N[x].
\end{cases}
\]
Define two multi-sets $X:=X(x,y)=\cup_{u\in N(x)} S_u$ and  $Y:=Y(x,y)=\cup_{u\in N(y)\setminus N[x]}S_u$. 
Then we construct an auxiliary complete bipartite edge-weighed graph $H:=H(x,y)$ with vertex-partition $X\cup Y$.
The weight of the edge $(u_i, v_j)\in X\times Y$ is set to $d(u_i,v_j)=d(u,v)$ if $u_i\in S_u$ and $v_j\in S_v$.

We have
\begin{align*}
|X|&=(c_x-c_y)(|N(x)\cap N(y)|+1) + c_x |N(x)\setminus N[y]|\\
&= c_x (|N(x)\cap N(y)|+1 + |N(x)\setminus N[y]|) -c_y (|N(x)\cap N(y)|+1)\\
&=c_x d_x -c_y (|N(x)\cap N(y)|+1)\\
&=c_y d_y -c_y (|N(x)\cap N(y)|+1)\\
&= c_y(d_y- |N(x)\cap N(y)|-1)\\
&=c_y |N(y)\setminus N[x]|\\
&=|Y|.
\end{align*}

Any integer-valued coupling $\sigma$ between $\mu_x$ and $\mu_y$ can be viewed as a perfect matching in $H$.
The cost $C(\sigma)$ is the sum of the edge-weights of the perfect matching.

Let $H_1:=H_1(x,y)$ be a bipartite graph on $(X,Y)$ whose edge set consisting of all edges with weight $1$ in $H$.
The graph $H_1$ can be viewed as a blow-up graph of the bipartite induced graph $G[N(x), N(y)\setminus N[x]]$.
We have the following corollary.

\begin{corollary}\label{cor:k upper bound}
    For any edge $xy\in E(G)$, assume $d_x\leq d_y$. We have
    \[\kappa(x,y)\leq \frac{|N(x)\cap N(y)|+2}{d_y}.\]
The equality holds if and only if there is a perfect matching in $H_1(x,y)$. 
\end{corollary}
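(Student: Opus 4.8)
The plan is to derive the inequality directly from the formula in Theorem~\ref{thm:localstructure} by bounding $\min_\sigma C(\sigma)$ from below, and then to read off the equality condition from the blow-up/matching interpretation developed just above. The whole argument rests on the trivial but crucial observation that the distances appearing in an admissible coupling are all at least $1$.

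First I would note that every pair $(u,v)$ in the support of a coupling $\sigma$ between $\mu_x$ and $\mu_y$ satisfies $d(u,v)\ge 1$: indeed $u$ ranges over $N(x)$ while $v$ ranges over $N(y)\setminus N[x]$, so $v\notin N[x]\ni u$ forces $u\neq v$. Hence
\[
C(\sigma)=\sum_{u,v}\sigma(u,v)\,d(u,v)\ \ge\ \sum_{u,v}\sigma(u,v)=\|\mu_y\|_1,
\]
where the last equality holds because $\sigma$ is a coupling, so the total transported mass is $\|\mu_x\|_1=\|\mu_y\|_1=|Y|$. Therefore $\min_\sigma C(\sigma)\ge \|\mu_y\|_1$.

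Next I would substitute this bound into Equation~\eqref{eq:formula}. Using $|N(y)\setminus N[x]|=d_y-|N(x)\cap N(y)|-1$ one gets $\|\mu_y\|_1=c_y\bigl(d_y-|N(x)\cap N(y)|-1\bigr)$, and since $\lcm(d_x,d_y)=c_y d_y$ the subtracted term simplifies to $1-\frac{|N(x)\cap N(y)|+1}{d_y}$. A one-line cancellation then gives
\[
\kappa(x,y)=1+\frac{1}{d_y}-\frac{\min_\sigma C(\sigma)}{\lcm(d_x,d_y)}\ \le\ \frac{|N(x)\cap N(y)|+2}{d_y},
\]
which is the asserted bound.

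Finally, for the equality statement I would invoke that an integer-valued coupling $\sigma$ is precisely a perfect matching of the blow-up bipartite graph $H(x,y)$ on $X\cup Y$, with $C(\sigma)$ equal to the total weight of that matching. Since every edge of $H$ has weight at least $1$ and $|X|=|Y|$, the minimum-weight perfect matching has weight $|Y|=\|\mu_y\|_1$ if and only if it uses only weight-$1$ edges, and by definition those are exactly the edges of $H_1(x,y)$; conversely any weight-$1$ perfect matching is itself an admissible $\sigma$ attaining the lower bound. Hence equality in the displayed bound is equivalent to $H_1(x,y)$ admitting a perfect matching. I do not anticipate a genuine obstacle here: the argument is just the transportation lower bound $d\ge 1$ combined with the matching dictionary, and the only care needed is the bookkeeping of the count $|N(y)\setminus N[x]|$ and the verification that the support of any coupling avoids the diagonal, so that the inequality $d(u,v)\ge 1$ is available with equality exactly on $H_1$.
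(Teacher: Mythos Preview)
Your argument is correct and follows essentially the same route as the paper: bound $C(\sigma)\ge\|\mu_y\|_1$ using $d(u,v)\ge 1$, plug into Theorem~\ref{thm:localstructure}, and read off the equality case via the $H_1$ perfect-matching interpretation. If anything, you are slightly more explicit than the paper in justifying why the support of any coupling avoids the diagonal and in spelling out the converse direction of the equality characterization.
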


\begin{proof}
\begin{align*}
    \kappa(x,y) & = 1+\frac{1}{d_y}-\frac{\min_{\sigma} C(\sigma)}{\lcm(d_x,d_y)}\\
    & \leq 1+\frac{1}{d_y}-\frac{\|\mu_y\|_1}{\lcm(d_x,d_y)}\\
    & = 1+\frac{1}{d_y}-\frac{c_y(d_y-1-|N(x) \cap N(y)|)}{\lcm(d_x,d_y)}\\
    &=1+\frac{1}{d_y}-\frac{d_y-1-|N(x) \cap N(y)|}{d_y}\\
     &=\frac{2+|N(x) \cap N(y)|}{d_y}.
\end{align*}    
If the equality holds, then $\min_{\sigma} C(\sigma)=\|\mu_y\|_1$. That is, $d(u,v)=1$ whenever $\sigma(u,v)>0$. Equivalently, there is a perfect matching in $H_1(x,y)$.
\end{proof}

We have the following corollary of the Hall theorem.
\begin{corollary}\label{cor:Hall}
The graph $H_1(x,y)$ has a perfect matching if and only if
\begin{equation}\label{eq:hallcondition}
  c_y|N(T_1\cup T_2) \cap (N(y)\setminus N[x])|\geq c_x|T_1|+(c_x-c_y)|T_2|,  
\end{equation}
holds (in $G$) for any
 $T_1 \subseteq N(x) \setminus N[y]$ and $T_2 \subseteq N(x)\cap N[y]$. 
\end{corollary}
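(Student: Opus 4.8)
The plan is to apply Hall's marriage theorem to the bipartite graph $H_1(x,y)$. Since $|X|=|Y|$ is already established above, a matching saturating $X$ is automatically perfect, so $H_1(x,y)$ has a perfect matching if and only if $|N_{H_1}(S)|\ge |S|$ for every $S\subseteq X$. The entire content of the corollary is to translate this condition on subsets of the blown-up set $X$ into the stated inequality on subsets $T_1,T_2$ of the original vertex classes of $N(x)$.

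The key reduction is that it suffices to test Hall's condition on those $S$ that are unions of complete blowup classes $S_u$. Indeed, for an arbitrary $S\subseteq X$, let $T=\{u\in N(x): S\cap S_u\ne\emptyset\}$ be the set of original vertices having at least one copy in $S$. Every copy of a fixed $u$ has exactly the same neighbors in $H_1$, namely the copies of the vertices $v\in N(y)\setminus N[x]$ with $uv\in E(G)$; hence $N_{H_1}(S)=N_{H_1}\!\big(\bigcup_{u\in T}S_u\big)$, while enlarging $S$ to $\bigcup_{u\in T}S_u$ only increases $|S|$. Thus the binding constraints come precisely from $S=\bigcup_{u\in T}S_u$ with $T\subseteq N(x)$, and Hall's condition is equivalent to
\[
\Big|N_{H_1}\big(\textstyle\bigcup_{u\in T}S_u\big)\Big|\ \ge\ \sum_{u\in T}|S_u|\qquad\text{for all }T\subseteq N(x).
\]

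Next I would evaluate the two sides. On the right, split $T$ along the two multiplicity classes, $T_1=T\cap(N(x)\setminus N[y])$ and $T_2=T\cap(N(x)\cap N[y])$; using $|S_u|=c_x$ on $T_1$ and $|S_u|=c_x-c_y$ on $T_2$ gives $\sum_{u\in T}|S_u|=c_x|T_1|+(c_x-c_y)|T_2|$. On the left, $N_{H_1}(S)$ is the union of the classes $S_v$ over $v\in N_G(T)\cap(N(y)\setminus N[x])$, and since each such class has $|S_v|=c_y$ copies, its size is $c_y\,|N_G(T_1\cup T_2)\cap(N(y)\setminus N[x])|$. Substituting these two evaluations into the displayed inequality reproduces \eqref{eq:hallcondition} exactly, and since $N(x)=(N(x)\setminus N[y])\sqcup(N(x)\cap N[y])$, letting $T_1,T_2$ range independently recovers all $T\subseteq N(x)$.

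The only genuinely non-routine point is the reduction to full blowup classes; after that the rest is bookkeeping. I would be careful about the boundary conventions: $x\notin N(x)$ and $S_x=\emptyset$, so $x$ never contributes to $T$, and the intersection with $N(y)\setminus N[x]$ on the left is automatic because $Y$ only contains copies of vertices in that set. With these caveats the equivalence of the perfect-matching condition with \eqref{eq:hallcondition} follows directly from Hall's theorem.
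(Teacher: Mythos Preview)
Your proposal is correct and follows essentially the same approach as the paper: both reduce Hall's condition on the blown-up bipartite graph $H_1(x,y)$ to subsets that are unions of full blowup classes $S_u$ (using that all copies of a fixed $u$ have identical $H_1$-neighborhoods), then split $T$ into $T_1=T\cap(N(x)\setminus N[y])$ and $T_2=T\cap(N(x)\cap N[y])$ and count copies on each side. The only cosmetic difference is that the paper writes out the two implications separately while you phrase the reduction as a direct equivalence.
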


\begin{proof}
On one direction, if $H_1$ has a perfect matching, then
$|N_{H_1}(K)|\geq |K|$, for any $K\subseteq X$.
In particular, it holds for $K=\cup_{u\in T_1\cup T_2}S_u$. We have
\[
c_x|T_1|+(c_x-c_y)|T_2|=|K|\leq |N_{H_1}(K)|=
c_y|N(T_1\cup T_2) \cap (N(y)\setminus N[x])|.
\]

On the other direction, suppose Inequality \eqref{eq:hallcondition} holds for any $T_1$ and $T_2$.
We will verify the Hall's condition for $H_1$.
For any $K\subseteq X$, define
$T=\{u\in N(x)\colon K\cap S_u\not=\emptyset\}$.
Let $T_1=T\cap  (N(x) \setminus N[y])$ and
$T_2=T\cap (N(x)\cap N[y])$. Let $\tilde T=\cup_{u\in T}S_u$ and $\widetilde {N(T)}= \cup_{u\in N(T)\cap 
(N(y)\setminus N[x])} S_u$.

Observe $N_{H_1} (\tilde T)=N_{H_1}(K)= \widetilde {N(T)}$ and
$K\subseteq \tilde T$.
We have
\begin{align*}
 |N_{H_1}(K)|&=|N_{H_1}(\tilde T)|\\
&=|\widetilde{N(T)}|\\   
&= c_y|N(T_1\cup T_2) \cap (N(y)\setminus N[x])|\\
&\geq c_x|T_1|+(c_x-c_y)|T_2|\\
&=|\tilde T|\\
&\geq |K|.
\end{align*}
By the Hall theorem, $H_1$ has a perfect matching.
\end{proof}

Now we are ready to prove Theorem \ref{thm:localstructure}.

\begin{proof}[Proof of Theorem \ref{thm:localstructure}]
 Applying Equation \eqref{eq:BCL} with $\alpha=\frac{1}{d_y+1}$, we have
 \begin{align}
     \kappa(x,y)&=\frac{\kappa_\alpha}{1-\alpha}  \nonumber \\
     &= \frac{1}{1-\alpha}-\frac{1}{1-\alpha}W(m_x^\alpha, m_y^\alpha) \nonumber \\
     &=1+\frac{1}{d_y} -\frac{1}{\lcm(d_x,d_y)} W\left(\frac{\lcm(d_x,d_y)}{(1-\alpha)} m_x^\alpha,  \frac{\lcm(d_x,d_y)}{(1-\alpha)}m_y^\alpha\right) \nonumber \\
     &=1+\frac{1}{d_y} -\frac{1}{\lcm(d_x,d_y)} W( \tilde \mu_x, \tilde \mu_y).
     \label{eq:muxy}
 \end{align}
Here $\tilde \mu_x$ and $\tilde \mu_y$ are short notations for $\frac{\lcm(d_x,d_y)}{(1-\alpha)} m_x^\alpha$ and  $\frac{\lcm(d_x,d_y)}{(1-\alpha)} m_y^\alpha$, respectively.
It is easy to calculate
\begin{align}
  \tilde \mu_x(u)=
\begin{cases}
  c_y & \mbox{ if } u=x,\\
  c_x & \mbox{ if } u\in N(x), \\
  0 & \mbox{ otherwise;}
\end{cases} \\
\tilde \mu_y(u)=
\begin{cases}
  c_y & \mbox{ if } u=y,\\
  c_y & \mbox{ if } u\in N(y), \\
  0 & \mbox{ otherwise.}
\end{cases}
\end{align}

Now we prove $W(\tilde \mu_x, \tilde \mu_y)= W(\mu_x,\mu_y)$.

First we show $W(\tilde \mu_x, \tilde \mu_y)\leq  W(\mu_x,\mu_y)$.
Suppose that $\sigma$ is an optimal coupling between $\mu_x$ and $\mu_y$, i.e.,
$W(\mu_x,\mu_y)=C(\sigma)$.

Now define a map $\tilde \sigma
\colon V\times V\to [0,\infty)$ as follows.
\[
\tilde \sigma(u,v) = \begin{cases}
\sigma(u,v) + c_y & \mbox{ if } u=v\in N[x]\cap N[y];\\
   \sigma(u,v) & \mbox{ otherwise}.
\end{cases}
\]
It is straightforward to verify $\tilde \sigma$ is indeed a coupling between
$\tilde \mu_x$ and $\tilde \mu_y$. We have
\[C(\tilde \sigma)= C(\sigma) + \sum_{u\in N[x]\cap N[y]} c_y d(u,u)= C(\sigma).\]
Thus,
\[ W(\tilde \mu_x, \tilde \mu_y)\leq  C(\tilde \sigma) =C(\sigma) =W(\mu_x, \mu_y).\]

Next, we will show $W(\tilde \mu_x, \tilde \mu_y)\geq  W(\mu_x,\mu_y)$. 
Since both $\tilde \mu_x$ and $\tilde \mu_y$ are integer-valued, by Lemma \ref{lem:integerA},
there exists an integer-valued optimal coupling $\tilde \sigma_0$ between $\tilde \mu_x$ and $\tilde \mu_y$. 
We would like to construct a new integer-valued optimal coupling $\tilde \sigma$ satisfying the following property. 

{\bf Property A:}  For any $z\in N[x]\cap N[y]$, $\tilde \sigma(z,z)=c_y$. 

Suppose $\tilde \sigma_0$ does not have Property A. There is a $z_0\in N[x]\cap N[y]$ such that $\tilde \sigma_0(z_0,z_0)\not=c_y$. Since $\tilde \sigma_0(z,z)\leq 
\tilde \mu_y(z)=c_y$, we must have 
$$\tilde \sigma_0(z_0,z_0) <c_y.$$
There exists a pair of vertices $u_0$ and $v_0$ with $\tilde \sigma_0(u_0, z_0)>0$
and $\tilde \sigma_0(z_0,v_0)>0$.

Let $c=\min\{ \tilde \sigma_0(u_0, z_0), \tilde \sigma_0(z_0,v_0)\}$.
Now define a mapping $\tilde \sigma_1: V\times V\to [0,\infty)$ as follows.

\[ \tilde \sigma_1(u,v) = \begin{cases}
  \tilde\sigma_0(u_0,z_0) -c
 & \mbox{ if } (u,v)=(u_0,z_0);\\
  \tilde\sigma_0(z_0,v_0) -c
 & \mbox{ if } (u,v)=(z_0,v_0);\\
  \tilde\sigma_0(z_0,z_0) +c 
 & \mbox{ if } u=v=z_0;\\
   \tilde\sigma_0(u_0,v_0) +c 
 & \mbox{ if } (u,v)=(u_0,v_0);\\
  \tilde \sigma_0(u,v) & \mbox{ otherwise}.
\end{cases}
\]

It is straightforward to verify $\tilde \sigma_1$ is still a coupling between
$\tilde \mu_x$ and $\tilde \mu_y$. Since $\tilde \sigma_0$ is integer-valued, $\tilde \sigma_0(u_0, z_0)$, $\tilde \sigma_0(z_0,v_0)$, and $c$ are integers.
Thus, $\tilde \sigma_1$ is integer-valued. Furthermore,
we have
\[C(\tilde \sigma_1) =C(\tilde \sigma_0) -c\left(d(u_0,z_0)+d(z_0,v_0)-d(u_0,v_0)\right)
\leq C(\tilde \sigma_0). 
\]
Since $\tilde \sigma_0$ is optimal, we must have
$ C(\tilde \sigma_1)=C(\tilde \sigma_0)$. Thus, $\tilde \sigma_1$ is also optimal.

We can iterate this process to construct a series of integer-valued optimal coupling
$\tilde \sigma_1$, $\tilde \sigma_2$, $\ldots$,
until Property A is satisfied.  Let 
$f(\tilde \sigma_i)=\sum_{z\in N[x]\cap N[y]} \tilde \sigma_i(z,z)$. Observe that
$f(\tilde \sigma_i)> f(\tilde \sigma_{i-1})$ during the process. The process must terminate in a finite number of steps because $f$ is an integer-valued increasing function and is bounded by $c_y |N[x]\cap N[y]|$.

Upon termination of the process, we obtain an integer-valued optimal coupling $\tilde \sigma$ satisfying Property A. Now we define a map $\sigma\colon V\times V\to [0,\infty)$
as follows.

\[\sigma(u,v) =
\begin{cases}
    0 & \mbox{ if } u=v\in N[x]\cap N[y];\\
  \tilde \sigma(u,v) & \mbox{ otherwise.}   
\end{cases}
\]
It is easy to confirm that $\sigma(u,v)$ is an integer-valued coupling between
$\mu_x$ and $\mu_y$.  Note that altering the values on the diagonal does not affect the cost.
We have
\[C(\tilde \sigma_0)=\cdots=C(\tilde \sigma)=C(\sigma).\]
Therefore,
\[W(\mu_x,\mu_y)\leq C(\sigma)=C(\tilde \sigma_0)=W(\tilde \mu_x, \tilde \mu_y). \]

Thus, we proved
\[ W(\tilde \mu_x, \tilde \mu_y)= W(\mu_x,\mu_y).\]
Plugging to Equation \eqref{eq:muxy}, we get

\[ \kappa(x,y)= 
1+\frac{1}{d_y}- \frac{W(\mu_x,\,u_y)}{{\lcm(d_x, d_y)}}
=
1+\frac{1}{d_y}-\frac{\min_\sigma C(\sigma)} {\lcm(d_x, d_y)}.\]
Here the minimum is achieved by some integer-valued coupling $\sigma$ between
$\mu_x$ and $\mu_y$.
\end{proof}

\section{Notation and Lemmas on Bonnet-Myers sharp graphs}

From now on, we assume $G=(V,E)$ is a simple connected finite graph so it has a finite diameter.
We always assume $diam(G)=L$ unless $L$ being specified. For any two vertices $u,v \in V(G)$, a {\em geodesic} from $u$ to $v$ is a shortest path from $u$ to $v$. 
We use an interval $[u,v]$ to denote the set of all vertices lying on geodesics from $u$ to $v$:
\[[u,v]=\{w \in V(G): d(u,w)+d(w,v)=d(u,v)\}.\]

Fix a pair of poles $(x,y)$ of $G$. We use $N_i(x)=\{u\in V(G): d(x,u)=i\}$ to denote the $i$-th neighbor of $x$. For any $1$-Lipschitz function $f\colon V\to \mathbb Z$ with $f(x)=0$ and $f(y)=L$, we define 
\begin{align*}
    N_f^+(u)&=\{v\in N(u)\colon f(v)=f(u)+1\};\\
    N_f^0(u)&=\{v\in N(u)\colon f(v)=f(u)\};\\
    N_f^-(u)&=\{v\in N(u)\colon f(v)=f(u)-1\},
\end{align*}
If $f$ is clear under context, we will drop the subscript $f$.

For this section, we assume
$G$ is a Bonnet-Myers sharp graph with diameter $L$, and $x,y$ is a pair of poles.
\begin{lemma} \label{lem:xi}
 Let $f\colon V\to \mathbb Z$ be a $1$-Lipschitz function with $f(x)=0$ and $f(y)=L$. For any shortest $xy$-path $x=x_0,x_1,\ldots, x_{L-1},x_L=y$, we have
\begin{enumerate}
    \item For $i=1,2,\ldots, L$, we have $\kappa(x_{i-1},x_{i})=\frac{2}{L}$. Moreover, $f$ is an optimal solution of Equation \eqref{eq:curv_laplacian} for computing $\kappa(x_{i-1},x_i).$
    \item We have   
      \begin{equation} \label{eq:17}
            |N_f^+(x_i)| - |N^-_f(x_i)|=(1- \frac{2i}{L}) d_{x_i},
        \end{equation}
for $0\leq i \leq L$. In particular, $2id_{x_i}$ is divisible by $L$.
\end{enumerate}
\end{lemma}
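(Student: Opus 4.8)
The plan is to use the given function $f$ directly as a test function in the Laplacian formula of Theorem \ref{thm:curvature_laplacian}, and then to sandwich a telescoping sum of edge curvatures between two copies of the number $2$. First I would record that along the geodesic the values of $f$ are completely pinned down: since $f$ is $1$-Lipschitz with $f(x)=0$, $f(y)=L$, and $d(x,x_i)=i$, $d(x_i,y)=L-i$, the two inequalities $f(x_i)\le i$ and $f(x_i)\ge L-(L-i)=i$ force $f(x_i)=i$ for every $i$. In particular $f(x_i)-f(x_{i-1})=1$, so $\nabla_{x_i x_{i-1}}f=1$ and $f$ is feasible in the infimum defining $\kappa(x_{i-1},x_i)$. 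This yields the term-wise bound $\kappa(x_{i-1},x_i)\le \nabla_{x_{i-1}x_i}\Delta f=\Delta f(x_{i-1})-\Delta f(x_i)$.

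Next I would set up the sandwich. Summing the term-wise bound over $i=1,\dots,L$ telescopes to $\sum_{i=1}^L \kappa(x_{i-1},x_i)\le \Delta f(x)-\Delta f(y)$. Two elementary estimates bound the right-hand side: because $f$ is $1$-Lipschitz with $f(x)=0$, each term $f(u)-f(x)\le 1$ gives $\Delta f(x)\le 1$, and because $f(y)=L$, each term $f(v)-f(y)\ge -1$ gives $\Delta f(y)\ge -1$, hence $\Delta f(x)-\Delta f(y)\le 2$. On the other hand, Bonnet-Myers sharpness gives $\kappa_{\min}=2/L$, so every edge satisfies $\kappa(x_{i-1},x_i)\ge 2/L$ and the sum is at least $2$. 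Chaining these, $2\le \sum_{i=1}^L\kappa(x_{i-1},x_i)\le \Delta f(x)-\Delta f(y)\le 2$ forces equality throughout. Equality in the first inequality, with $L$ summands each at least $2/L$, forces each $\kappa(x_{i-1},x_i)=2/L$; equality in the telescoped inequality, holding term by term, forces $\kappa(x_{i-1},x_i)=\nabla_{x_{i-1}x_i}\Delta f$, i.e.\ $f$ attains the infimum and is optimal. This is exactly Part (1).

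For Part (2) I would extract the Laplacian values. The forced equalities also pin the endpoints: $\Delta f(x)=1$ and $\Delta f(y)=-1$. Combining $\kappa(x_{i-1},x_i)=\Delta f(x_{i-1})-\Delta f(x_i)=2/L$ with $\Delta f(x_0)=1$ shows that $\Delta f(x_i)=1-\tfrac{2i}{L}$ forms an arithmetic progression. Finally, since $f$ is integer-valued and $1$-Lipschitz, for $u\in N(x_i)$ we have $f(u)-f(x_i)\in\{-1,0,1\}$, so $\sum_{u\in N(x_i)}(f(u)-f(x_i))=|N_f^+(x_i)|-|N_f^-(x_i)|$ and therefore $d_{x_i}\,\Delta f(x_i)=|N_f^+(x_i)|-|N_f^-(x_i)|$. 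Substituting the value of $\Delta f(x_i)$ gives Equation \eqref{eq:17}; since the left side is an integer, $\tfrac{2i}{L}d_{x_i}$ must be an integer, which is the divisibility claim.

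I expect the only delicate point to be justifying the two endpoint Laplacian estimates and confirming that each equality in the chain propagates term by term rather than merely on average; the remainder is bookkeeping. The feasibility facts needed (that $f$ lies in $Lip(1)$ and satisfies the gradient normalization on each edge) are immediate from $f(x_i)=i$, so no machinery beyond Theorem \ref{thm:curvature_laplacian}, Lemma \ref{lem:diam}, and the definition of Bonnet-Myers sharpness is required.
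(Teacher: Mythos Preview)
Your proposal is correct and follows essentially the same approach as the paper: both sandwich the telescoping sum $\sum_{i=1}^L\kappa(x_{i-1},x_i)$ between $L\kappa_{\min}=2$ and $\Delta f(x)-\Delta f(y)\le 2$, then read off term-by-term equality and the arithmetic progression $\Delta f(x_i)=1-\tfrac{2i}{L}$. You are slightly more explicit than the paper in verifying $f(x_i)=i$ and the gradient normalization $\nabla_{x_i x_{i-1}}f=1$ needed to invoke Theorem~\ref{thm:curvature_laplacian}, but the substance is identical.
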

\begin{proof} 
   We have
    \begin{align*}
         2=L\kappa_{min} &\leq \sum_{i=1}^L\kappa(x_{i-1},x_i)\\
         &\leq \sum_{i=1}^L (\Delta f(x_{i-1}) -\Delta f(x_i))\\
         &= \Delta f(x_0) -\Delta f(x_L)\\
         &\leq 1 -(-1)\\
         &=2.
    \end{align*}
Therefore, all of the inequalities occurred above are indeed equalities. Specifically, for $1\leq i\leq L$, we have
$$\frac{2}{L}=\kappa_{min}=\kappa(x_{i-1},x_i)=\Delta f(x_{i-1}) -\Delta f(x_i).$$
Thus, $f$ is an optimal solution of Equation \eqref{eq:curv_laplacian} for computing $\kappa(x_{i-1},x_i)$.

Since $\Delta f(x_0)=1$, we have
$$\Delta f(x_i) = 1-\frac{2i}{L}, \mbox { for } 0\leq i \leq L.$$
Equation \eqref{eq:17} follows from the fact that $$\Delta f(x_i)=\frac{|N_f^+(x_i)|- |N_f^-(x_i)|}{d_{x_i}}.$$
\end{proof}

Lemma \ref{lem:xi} leads directly to the next lemma, which provides a formula for the ratio of $|N_f^+(u)|$ and $|N_f^-(u)|$ for a vertex $u$ when $|N_f^0(u)|=0$.  This lemma states that the ratio depends solely on the distance between $u$ and $x$.

\begin{lemma} \label{lem:biratio}
 Let $f\colon V\to \mathbb Z$ be a $1$-Lipschitz function with $f(x)=0$ and $f(y)=L$.  Then for $u \in N_i(x)$ with $|N_f^0(u)|=0$, we have
\[\frac{|N_f^+(u)|}{|N_f^-(u)|}=\frac{L}{i}-1,\]
for $i=1,\ldots,L$.
\end{lemma}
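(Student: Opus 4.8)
The plan is to collapse the whole statement onto the single scalar $\Delta f(u)$ and then read off the ratio. First I would record the two linear relations supplied by the hypotheses. By the definition of the Laplacian, $\Delta f(u)=\frac{|N_f^+(u)|-|N_f^-(u)|}{d_u}$, so $|N_f^+(u)|-|N_f^-(u)|=\Delta f(u)\,d_u$; and the assumption $|N_f^0(u)|=0$ says every neighbor of $u$ is counted in $N_f^+(u)$ or $N_f^-(u)$, giving $|N_f^+(u)|+|N_f^-(u)|=d_u$. These are two linear equations in the two unknowns $|N_f^+(u)|,|N_f^-(u)|$; adding and subtracting yields $|N_f^+(u)|=\tfrac{1+\Delta f(u)}{2}d_u$ and $|N_f^-(u)|=\tfrac{1-\Delta f(u)}{2}d_u$, hence $\frac{|N_f^+(u)|}{|N_f^-(u)|}=\frac{1+\Delta f(u)}{1-\Delta f(u)}$.

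It therefore suffices to establish the single identity $\Delta f(u)=1-\frac{2i}{L}$ for $u\in N_i(x)$. Substituting it makes the ratio $\frac{2-2i/L}{2i/L}=\frac{L-i}{i}=\frac{L}{i}-1$, which is exactly the claim (and the denominator $1-\Delta f(u)=\frac{2i}{L}$ is nonzero since $i\ge 1$). To obtain the identity I would realize $u$ as the $i$-th vertex of a shortest $xy$-path: concatenating a geodesic from $x$ to $u$ with a geodesic from $u$ to $y$ gives a walk of length $d(x,u)+d(u,y)$, and when this equals $L$ it is a shortest $xy$-path through $u$ sitting at position $i$. Lemma~\ref{lem:xi}(2), that is Equation~\eqref{eq:17} applied to this path, reads $|N_f^+(u)|-|N_f^-(u)|=(1-\frac{2i}{L})d_u$, which is precisely $\Delta f(u)=1-\frac{2i}{L}$, and the computation of the previous paragraph finishes the proof.

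The main obstacle is justifying that $u$ actually lies on a shortest $xy$-path, i.e.\ that $d(x,u)+d(u,y)=L$ (equivalently, that $f(u)=d(x,u)=i$). The curvature bound by itself is one-sided: since $\kappa\ge\kappa_{min}=\frac{2}{L}$ on every edge and $f$ is $1$-Lipschitz, Theorem~\ref{thm:curvature_laplacian} gives $\Delta f(a)-\Delta f(b)\ge\frac{2}{L}$ for each edge $ab$ with $f(b)=f(a)+1$; telescoping this along an $x$-to-$u$ geodesic produces only the inequality $\Delta f(u)\le 1-\frac{2i}{L}$, and the matching lower bound genuinely needs the geodesic to be extendable all the way to the far pole $y$. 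I would therefore derive $d(x,u)+d(u,y)=L$ from the Bonnet-Myers sharp structure of $G$ together with the boundary conditions $f(x)=0,\ f(y)=L$ (which already force every neighbor of a pole onto the next level, via $\Delta f(x)=1$ from the proof of Lemma~\ref{lem:xi}); this geodesic-through-$u$ claim is the technical heart, after which the remainder is the elementary bookkeeping above.
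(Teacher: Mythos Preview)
Your argument is the paper's argument: invoke Equation~\eqref{eq:17} from Lemma~\ref{lem:xi}, combine it with $|N_f^+(u)|+|N_f^-(u)|=d_u$ from the hypothesis $|N_f^0(u)|=0$, and solve for the ratio. The obstacle you correctly flag---that $u$ must lie on a shortest $xy$-path before Lemma~\ref{lem:xi} applies---is not addressed in the paper's own three-line proof either; it is precisely the content of Lemma~\ref{lem:interval} ($[x,y]=V(G)$), stated and proved immediately afterward, so you may simply cite that rather than reconstruct it.
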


\begin{proof}
By Lemma \ref{lem:xi}, we have 
\[|N_f^+(u)|-|N_f^-(u)|=\left(1-\frac{2}{L}i\right)d_{u}.\]    
Since $d_{u}=|N_f^+(u)|+|N_f^-(u)|+|N_f^0(u)|=|N_f^+(u)|+|N_f^-(u)|$, we have
\[|N_f^+(u)|-|N_f^-(u)|=\left(1-\frac{2}{L}i\right)\left(|N_f^+(u)|+|N_f^-(u)| \right).\] 
It implies
\[\frac{|N_f^+(u)|}{|N_f^-(u)|}=\frac{L}{i}-1.\]
\end{proof}

\begin{lemma}\label{lem:interval}
     We have $[x,y]=V(G)$.
\end{lemma}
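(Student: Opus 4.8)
The plan is to exploit Bonnet–Myers sharpness through the Laplacian formulation of curvature (Theorem \ref{thm:curvature_laplacian}), applied to the two natural distance functions $f(v)=d(x,v)$ and $g(v)=d(y,v)$, which are both $1$-Lipschitz with $f(x)=g(y)=0$ and $f(y)=g(x)=L$. Recall that sharpness means $\mathrm{diam}(G)=L=2/\kappa_{\min}$, so $\kappa_{\min}=2/L$ and every edge $uv$ satisfies $\kappa(u,v)\ge 2/L$.

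First I would establish the pointwise bound $\Delta f(v)\le 1-\tfrac{2d(x,v)}{L}$ for \emph{every} vertex $v$, upgrading the geodesic-only statement of Lemma \ref{lem:xi}. The key observation is that whenever $u\sim v$ with $d(x,u)=d(x,v)+1$, the function $f$ is feasible in \eqref{eq:curv_laplacian} for the edge $vu$: it is $1$-Lipschitz and $\nabla_{uv}f=f(u)-f(v)=1$. Since $\kappa(v,u)$ is an infimum over such feasible functions, $\Delta f(v)-\Delta f(u)=\nabla_{vu}\Delta f\ge \kappa(v,u)\ge 2/L$. Choosing any shortest $x$–$v$ path $x=w_0,w_1,\dots,w_k=v$ (so $d(x,w_j)=j$ and each step increases $f$ by exactly $1$) and telescoping these inequalities gives $\Delta f(x)-\Delta f(v)\ge 2k/L$; combined with $\Delta f(x)=1$ (all neighbors of $x$ are one step farther from $x$), this yields the claimed bound. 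The symmetric argument with $g$ gives $\Delta g(v)\le 1-\tfrac{2d(y,v)}{L}$ for all $v$.

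Next I would add the two estimates. Writing $\ell(v)=d(x,v)+d(y,v)$, which satisfies $\ell(v)\ge d(x,y)=L$ by the triangle inequality since $(x,y)$ are poles, I obtain $\Delta(f+g)(v)\le 2-\tfrac{2\ell(v)}{L}\le 0$ for every $v$, with equality in the last inequality precisely when $v\in[x,y]$. The final step is the elementary identity $\sum_{v}d_v\,\Delta h(v)=0$, valid for any function $h$ by antisymmetry of $h(u)-h(v)$ over oriented edges, applied to $h=f+g$. Each summand $d_v\,\Delta(f+g)(v)$ is nonpositive and they sum to zero, so each vanishes; hence $\Delta(f+g)(v)=0$, which forces $\ell(v)\le L$ and therefore $\ell(v)=L$ for all $v$. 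Thus every vertex lies on an $x$–$y$ geodesic, i.e.\ $[x,y]=V(G)$.

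The main obstacle is the first step: converting the curvature information from a statement about vertices on a single fixed geodesic (Lemma \ref{lem:xi}) into a bound that holds at every vertex. The correct viewpoint is to read Theorem \ref{thm:curvature_laplacian} as supplying, for each ``upward'' edge, a one-sided inequality $\Delta f(v)-\Delta f(u)\ge 2/L$ that can be chained along an \emph{arbitrary} shortest path terminating at $v$, not just along a prescribed $x$–$y$ geodesic. Once both distance functions are controlled this way, the global sum-to-zero identity closes the argument essentially for free; the only points requiring care are verifying $\Delta f(x)=\Delta g(y)=1$ and that the feasibility condition $\nabla_{uv}f=1$ is exactly what a distance-increasing edge provides.
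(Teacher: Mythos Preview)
Your argument is correct and genuinely different from the paper's. The paper argues by contradiction: assuming some vertex lies outside $[x,y]$, it takes a connected component $C$ of $G\setminus[x,y]$, perturbs the distance function $f$ by subtracting $1$ on $C$ to obtain another admissible $1$-Lipschitz function $\tilde f$, and then applies Lemma~\ref{lem:xi} at a boundary vertex $x_i\in[x,y]$ adjacent to $C$ with both $f$ and $\tilde f$; the resulting pair of equalities conflict because the perturbation strictly decreases $|N^+(x_i)|$ while leaving $|N^-(x_i)|$ unchanged. Your route instead telescopes the edge inequality $\Delta f(v)-\Delta f(u)\ge\kappa(v,u)\ge 2/L$ along an arbitrary shortest path from $x$ to get the global bound $\Delta f(v)\le 1-\tfrac{2}{L}d(x,v)$ (and symmetrically for $g=d(y,\cdot)$), then closes with the degree-weighted identity $\sum_v d_v\,\Delta(f+g)(v)=0$. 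Your approach is arguably cleaner: it avoids the component/perturbation bookkeeping, and as a byproduct it simultaneously yields the equality $\Delta f(v)=1-\tfrac{2}{L}d(x,v)$ at \emph{every} vertex, which the paper only obtains after combining Lemma~\ref{lem:xi} with this very lemma. The one mild caveat is that your final summation step uses finiteness of $V(G)$, which is explicitly assumed in this section; the paper's perturbation argument would go through for locally finite graphs with bounded diameter as well.
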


\begin{proof}
   We will prove by contradiction. Suppose there are some vertex in $V\setminus [x,y]$. Let $C$ be a connected component of the induced subgraph of $G$ on $V\setminus [x,y]$. Let $f(v)=d(x,v)$ be the distance function from $x$ to $v$. Clearly $f$ is a $1$-Lipschitz function with $f(x)=0$ and $f(y)=L$.
    Now we define a new function $\tilde f$:
    $$\tilde f(v)=\begin{cases}
        f(v) -1 & \mbox{ if } v\in V(C),\\
        f(v) & \mbox{otherwise}.
    \end{cases}$$

    We claim that $\tilde f$ is also a $1$-Lipschitz function with $f(x)=0$ and $f(y)=L$. If not, there exists a pair of vertex $(u,v)$ such that 
    \[|\tilde f(u) -\tilde f(v)|>d(u,v).\]
    This can only occur when one vertex is in $C$ and the other vertex is in $[x,y]$, say $v\in V(C)$ and $u\in [x,y]$. It is also necessary to have $f(u)>f(v)$ and
    \[\tilde f(u) -\tilde f(v) \geq d(u,v)+1.\]
This implies $f(u)-f(v)=d(u,v)$.
Since $u\in [x,y]$, we have $d(u,y)=f(y)-f(u)$. 
Hence,
$$d(x,v)+ d(v,u) +d(u,y)=f(v)-f(x) + f(u)-f(v) + f(y)-f(u)=f(y)-f(x)=d(x,y).$$
Thus $x-v-u-y$ is a shortest path from $x$ to $y$. Contradiction!

Consider $v_0$ as a vertex in $C$ that
attains the minimum value of $f$, say $f(v_0)=i+1$ for some $i$.
Along a shortest path from $x$ to $v_0$, there exists a vertex, denoted as $x_{i}$, which is located within the interval $[x,y]$ and satisfies the condition $f(x_{i})=i$.
Let $x=x_0, x_1, \ldots, x_{i},\ldots, x_L=y$ be a shortest $xy$-path passing through $x_{i}$. 
Applying Lemma \ref{lem:xi} to $x_i$ with respect to $f$, we have
\begin{equation}\label{eq:6}
 |N_f^+(x_i)| - |N^-_f(x_i)|=(1- \frac{2i}{L}) d_{x_i}.
\end{equation}
Now Applying Lemma \ref{lem:xi} to $x_i$ with respect to $\tilde f$, we have
\begin{equation} \label{eq:7}
|N_{\tilde f}^+(x_i)| - |N^-_{\tilde f}(x_i)|=(1- \frac{2i}{L}) d_{x_i}.
\end{equation}
By the choice of $x_i$, we have
\begin{equation}\label{eq:8}
|N^-_{\tilde f}(x_i)|=|N^-_f(x_i)| \mbox{ but }  |N^+_{\tilde f}(x_i)|<|N^+_f(x_i)|.
\end{equation}
Equations \eqref{eq:6}, \eqref{eq:7}, and \eqref{eq:8} conflict to one another. Contradiction!

Therefore $[x,y]=V(G)$.
\end{proof}

This implies that for any pole $x$, there is a unique vertex $y$, such that $d(x,y)=L$. (Otherwise, there is another vertex, say $y'$,  satisfying $d(x,y')=L$. Then $y'$ is not in the interval $[x,y]$. Contradiction!) We call $y$ is the {\em anti-pole} of $x$. 

\begin{corollary}\label{cor:4}
The distance function from $x$, i.e., $f(z):=d(x,z)$ (for $z \in V(G)$), is the unique $1$-Lipschitz function satisfying $f(x)=0$ and $f(y)=L$.
\end{corollary}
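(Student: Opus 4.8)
The plan is to prove uniqueness by a sandwich argument: I would bound any admissible function both above and below by the distance function $d(x,\cdot)$, and then invoke Lemma \ref{lem:interval} to force these two bounds to coincide. Existence is immediate and should be dispatched first — the distance function $d(x,\cdot)$ is $1$-Lipschitz by the triangle inequality, vanishes at $x$, and takes the value $d(x,y)=L$ at $y$ because $(x,y)$ is a pair of poles. So only uniqueness carries content.

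Next I would fix an arbitrary $1$-Lipschitz function $f$ with $f(x)=0$ and $f(y)=L$, and extract two bounds at each vertex $z$. Applying the Lipschitz condition between $x$ and $z$ gives $f(z)=f(z)-f(x)\le d(x,z)$, the upper bound. Applying it between $y$ and $z$ gives $L-f(z)=f(y)-f(z)\le d(y,z)$, that is $f(z)\ge L-d(y,z)$, the lower bound. Both are completely elementary and hold in any graph, independent of curvature.

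The main step — and the only place where Bonnet-Myers sharpness enters — is closing the gap between these two bounds. By Lemma \ref{lem:interval} we have $[x,y]=V(G)$, so every vertex $z$ lies on a geodesic from $x$ to $y$; equivalently $d(x,z)+d(z,y)=d(x,y)=L$. Hence $L-d(y,z)=d(x,z)$, so the lower bound equals the upper bound and $f(z)=d(x,z)$ is forced. Since $z$ was arbitrary, $f$ coincides with the distance function everywhere, which is exactly the claimed uniqueness.

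I do not anticipate a genuine obstacle: the entire substance is already packaged in Lemma \ref{lem:interval}, and once the additivity identity $d(x,z)+d(z,y)=L$ is available, the two Lipschitz bounds pinch automatically. The only point requiring care is to make sure the Lipschitz inequality is invoked in the correct direction from each pole (upper bound from $x$, lower bound from $y$), since it is the interaction of both that produces the exact value.
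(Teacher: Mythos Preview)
Your argument is correct and is exactly the natural derivation the paper has in mind: Corollary~\ref{cor:4} is stated without proof, as an immediate consequence of Lemma~\ref{lem:interval}, and your sandwich bound $L-d(y,z)\le f(z)\le d(x,z)$ together with the geodesic identity $d(x,z)+d(z,y)=L$ is precisely how one unpacks that implication.
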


From now on, unless specified otherwise, $f$ is always referred to the distance function $d(x, \bullet)$. 

\begin{corollary}\label{cor:matchingdirection}
   Consider an edge $uv$ with $d(x,u)<d(x,v)$.
   Let $\sigma$ be any optimal coupling of Equation \eqref{eq:formula} for computing $\kappa(u,v)$. For any $w\in X(u,v)$ and $z\in Y(u,v)$ with $\sigma(w,z)>0$, 
   we have
   \[d(x,w)<d(x,z).\]
\end{corollary}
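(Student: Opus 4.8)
The plan is to combine the optimality of the distance function $f=d(x,\bullet)$ (Corollary \ref{cor:4}) with the complementary slackness relation between $f$ and any optimal coupling $\sigma$ (Lemma \ref{lem:duality2}). Since we are computing $\kappa(u,v)$ for the edge $uv$ with $d(x,u)<d(x,v)$, I would first record that $G$ is Bonnet-Myers sharp and that $f$ restricted to the relevant neighborhoods is a $1$-Lipschitz function optimal for Equation \eqref{eq:curv_laplacian}. The subtle point is that Lemma \ref{lem:duality2} is stated for the optimization computing $\kappa(x,y)$ between the two poles, so the first step is to argue that the same duality holds for the edge $uv$: namely, that the distance function $d(u,\bullet)$ (or an appropriate shift of $f$) is an optimal Lipschitz function for $\kappa(u,v)$, which follows from Lemma \ref{lem:xi}(1) applied to a shortest $xy$-path through the edge $uv$.

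Next I would unwind the definitions. For $w\in X(u,v)$ and $z\in Y(u,v)$ with $\sigma(w,z)>0$, the vertices $w\in N(u)$ and $z\in N(v)\setminus N[u]$ are genuine vertices of $G$ (the blowup just duplicates them). Lemma \ref{lem:duality2}(2) then forces $g(z)-g(w)=d(w,z)$, where $g$ is the optimal Lipschitz function for $\kappa(u,v)$. Taking $g=d(x,\bullet)=f$ (up to the constant shift that does not affect differences), this reads $f(z)-f(w)=d(w,z)\geq 1$, since $w\neq z$ and $\sigma(w,z)>0$ means the cost edge is used. In particular $f(z)>f(w)$, i.e. $d(x,z)>d(x,w)$, which is exactly the claim.

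The main obstacle I anticipate is the bookkeeping in the first step: verifying that the distance function $f$ really is the optimal Lipschitz function appearing in Lemma \ref{lem:duality2} for the edge $uv$, rather than just for the pole pair $(x,y)$. The clean way to handle this is to invoke Lemma \ref{lem:xi}(1), which says that along any shortest $xy$-path the function $f$ is simultaneously optimal for \eqref{eq:curv_laplacian} at every consecutive edge $(x_{i-1},x_i)$; since $uv$ can be embedded as such a consecutive pair on a geodesic from $x$ to $y$ (using $[x,y]=V(G)$ from Lemma \ref{lem:interval} to guarantee $u,v$ lie on a common geodesic), the hypotheses of Lemma \ref{lem:duality2} are met with $g=f$. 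Once this identification is in place, the conclusion is immediate from complementary slackness and the Lipschitz bound $f(z)-f(w)\le d(w,z)$ becoming an equality. I would close by remarking that the strict inequality, as opposed to $\ge$, comes precisely from $d(w,z)\ge 1$ together with $z\neq w$, so no separate argument is needed.
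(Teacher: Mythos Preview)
Your proposal is correct and follows exactly the same route as the paper: invoke Lemma~\ref{lem:xi}(1) to see that $f=d(x,\bullet)$ is optimal for Equation~\eqref{eq:curv_laplacian} at the edge $uv$, then apply Lemma~\ref{lem:duality2}(2) to force $f(z)-f(w)=d(w,z)>0$. Your extra care in checking that $uv$ lies on an $xy$-geodesic (via $[x,y]=V(G)$) and that $w\neq z$ (since $z\in N(v)\setminus N[u]$ while $w\in N(u)$) fills in steps the paper leaves implicit; note also that the ``subtle point'' you raise is not actually an issue, since the $x,y$ in Lemma~\ref{lem:duality2} are generic placeholders for any edge, not the poles.
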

\begin{proof}
    By Lemma \ref{lem:xi}, $d(x,\bullet)$ is an optimal function of Equation \eqref{eq:curv_laplacian}. Then applying Lemma \ref{lem:duality2}, we have
    \[d(x,z)-d(x,w)=d(z,w)>0.\]
\end{proof}

We typically draw the graph $G$ such that the function $d(x, \bullet)$ increases from left to right. For simplicity, we say that $\sigma$ transfers masses from left to right.

We define $d^+_u = |N_f^+(u)|$, $d_u^0 =| N_f^0(u)|$, and $d_u^-=|N_f^-(u)|$.
\begin{lemma} \label{lem:curvature}
 Let $u,v$ be two vertices on a shortest $xy$-path, i.e. $d(x,y)=d(x,u)+d(u,v)+d(v,y)$. 
Then, $\kappa(u,v)=\frac{2}{L}$. 
\end{lemma}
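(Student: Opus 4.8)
The plan is to reduce the statement to the case of adjacent vertices that was already handled in Lemma \ref{lem:xi}, and then bridge from the adjacent case to arbitrary $u,v$ on a geodesic using Lemma \ref{lem:adj-pair}. First I would fix a shortest $xy$-path that passes through both $u$ and $v$; such a path exists precisely because the hypothesis $d(x,y)=d(x,u)+d(u,v)+d(v,y)$ says $u$ and $v$ both lie on a common geodesic from $x$ to $y$ (concatenate a geodesic $x\to u$, a geodesic $u\to v$, and a geodesic $v\to y$, and the distance additivity guarantees the concatenation is itself a geodesic of length $L$). Write this path as $x=x_0,x_1,\ldots,x_L=y$ with $u=x_i$ and $v=x_j$ for some $0\le i<j\le L$.

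Next I would invoke part (1) of Lemma \ref{lem:xi}: every consecutive pair $x_{k-1},x_k$ on a shortest $xy$-path satisfies $\kappa(x_{k-1},x_k)=\frac{2}{L}$. In particular each edge of our path has curvature exactly $\frac{2}{L}=\kappa_{min}$. The remaining task is to upgrade this edge-wise equality to the pair $(x_i,x_j)=(u,v)$, which are in general not adjacent when $j>i+1$. This is where Lemma \ref{lem:adj-pair} enters: since $\kappa(x',y')\ge \kappa_{min}$ for \emph{every} edge $x'y'\in E(G)$, that lemma yields $\kappa(u,v)\ge \kappa_{min}=\frac{2}{L}$ for the (possibly non-adjacent) pair $u,v$ as well. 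So the lower bound $\kappa(u,v)\ge \frac{2}{L}$ is immediate.

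For the matching upper bound I would reuse the telescoping-sum argument from the proof of Lemma \ref{lem:xi}, but restricted to the subpath from $u=x_i$ to $v=x_j$. Using the limit-free Laplacian formulation (Theorem \ref{thm:curvature_laplacian}) with the distance function $f=d(x,\bullet)$, which by Corollary \ref{cor:4} is the unique admissible $1$-Lipschitz function, we have $\Delta f(x_k)=1-\frac{2k}{L}$ from Lemma \ref{lem:xi}. Then
\[
d(u,v)\,\kappa(u,v)\le \nabla_{uv}\Delta f \cdot d(u,v)=\Delta f(x_i)-\Delta f(x_j)=\frac{2(j-i)}{L}=\frac{2}{L}\,d(u,v),
\]
where the first inequality is the fact that $f$ is a feasible (not necessarily optimal) function in the infimum defining $\kappa(u,v)$, after checking $\nabla_{vu}f=\frac{f(v)-f(u)}{d(u,v)}=\frac{j-i}{j-i}=1$. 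Dividing by $d(u,v)=j-i$ gives $\kappa(u,v)\le\frac{2}{L}$. Combining with the lower bound yields $\kappa(u,v)=\frac{2}{L}$.

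The main obstacle I anticipate is the upper bound rather than the lower bound: one must confirm that $f=d(x,\bullet)$ is genuinely an admissible test function for the pair $(u,v)$, meaning it is $1$-Lipschitz (clear, as it is a graph distance) and satisfies the normalization $\nabla_{vu}f=1$, which holds exactly because $u$ and $v$ sit on a common geodesic so $f(v)-f(u)=d(u,v)$. Once admissibility is in place, the telescoping of $\Delta f$ values along the subpath is routine, since those values were already computed in Lemma \ref{lem:xi}. A cleaner alternative avoiding the Laplacian entirely is to note that the edge-wise equalities from Lemma \ref{lem:xi} already force each $\kappa(x_{k-1},x_k)=\frac{2}{L}$, and then combine with the general inequality $\kappa(u,v)\le \frac{1}{d(u,v)}\sum_{k=i+1}^{j}d(x_{k-1},x_k)\kappa(x_{k-1},x_k)$-type superadditivity of curvature along geodesics; I would present the Laplacian computation as the primary route since it is self-contained given the results already established.
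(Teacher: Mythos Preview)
Your proposal is correct and follows essentially the same approach as the paper: the upper bound $\kappa(u,v)\le\frac{2}{L}$ comes from plugging the distance function $f=d(x,\bullet)$ into the Laplacian formulation and using the values $\Delta f(w)=1-\frac{2}{L}d(x,w)$ established in Lemma~\ref{lem:xi}, while the lower bound comes from Lemma~\ref{lem:adj-pair}. The paper's write-up is slightly leaner in that it does not bother to thread a single geodesic through both $u$ and $v$ (it just invokes Lemma~\ref{lem:interval} to know each of $u,v$ lies on \emph{some} $xy$-geodesic, which is all Lemma~\ref{lem:xi} needs), and it does not appeal to Corollary~\ref{cor:4}; but these are cosmetic differences, not substantive ones.
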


\begin{proof}
By definition, we have 
\begin{align*}
\kappa(u,v) &\leq \frac{1}{d(u,v)}\left(\Delta f(u)-\Delta f(v)\right)\\
&=\frac{1}{d(u,v)}\left(\left(1-\frac{2}{L}d(x,u)\right)-\left(1-\frac{2}{L}d(x,v)\right)\right)\\
&=\frac{1}{d(u,v)}\left(\frac{2}{L}\left(d(x,v)-d(x,u)\right)\right)\\
&=\frac{1}{d(u,v)}\left(\frac{2}{L}d(u,v)\right)\\
&=\frac{2}{L}.
\end{align*}    
The second line comes from Lemma \ref{lem:xi} since any vertex in $G$ lies on a geodesic from $x$ to $y$. Since $\kappa(u,v) \geq \kappa_{min}=\frac{2}{L}$ by Lemma \ref{lem:adj-pair}, we have $\kappa(u,v)=\frac{2}{L}$. 
\end{proof}

{\bf Remark:} Cushing-Kamtue-Koolen-Liu-M\"unch-Peyerimhoff  \cite{CKKLMP} proved similar results to our Lemma \ref{lem:xi}, \ref{lem:interval}, and \ref{lem:curvature} on D-regular Bonnet-Myers sharp graphs. See Theorem 5.4, Theorem 5.5, and Lemma 5.3 in \cite{CKKLMP} respectively.

The next lemma gives a degree upper bound for the poles in a Bonnet-Myers sharp graph.
\begin{lemma}\label{lem:pole}
 For any vertex $u \in N(x)$, we have $d_x \leq d_u$.
\end{lemma}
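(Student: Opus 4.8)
The plan is to reduce everything to the single edge $xu$ and combine the curvature value $\frac{2}{L}$ on that edge with the local degree information encoded in Lemma \ref{lem:xi}. First I would note that since $[x,y]=V(G)$ by Lemma \ref{lem:interval}, the vertex $u$ lies on some shortest $xy$-path; hence Lemma \ref{lem:curvature} applies with $u$ as a vertex on a geodesic from $x$ to $y$, giving $\kappa(x,u)=\frac{2}{L}$.

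Next I would extract the degree structure of $u$ from Lemma \ref{lem:xi}. Taking a shortest $xy$-path through $u$ and applying Equation \eqref{eq:17} with $i=1$ yields $|N_f^+(u)|-|N_f^-(u)|=\left(1-\frac{2}{L}\right)d_u$, where $f=d(x,\bullet)$. Because the only vertex at distance $0$ from $x$ is $x$ itself, and $x\in N(u)$, we have $N_f^-(u)=\{x\}$, so $|N_f^-(u)|=1$. Substituting into $d_u=|N_f^+(u)|+|N_f^0(u)|+|N_f^-(u)|$ gives $|N_f^0(u)|=\frac{2d_u}{L}-2$. The key identification is that a vertex $w$ is a common neighbor of $x$ and $u$ exactly when $w\in N(u)$ with $f(w)=1=f(u)$; that is, $N(x)\cap N(u)=N_f^0(u)$, so that $|N(x)\cap N(u)|=\frac{2d_u}{L}-2$.

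Finally I would feed this into Corollary \ref{cor:k upper bound}. Writing $D=\max(d_x,d_u)$ for the larger of the two degrees, the corollary gives
\[
\frac{2}{L}=\kappa(x,u)\le \frac{|N(x)\cap N(u)|+2}{D}=\frac{2d_u/L}{D}.
\]
This simplifies to $D\le d_u$; since $D=\max(d_x,d_u)\ge d_u$ automatically, we must have $D=d_u$, which forces $d_x\le d_u$, as desired.

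The argument is essentially computational once the right objects are lined up, so the only real obstacle is the bookkeeping: verifying that Lemma \ref{lem:xi} genuinely applies to $u$ (which requires $u$ on a geodesic, hence Lemma \ref{lem:interval}), and correctly identifying $N(x)\cap N(u)$ with $N_f^0(u)$ so that the numerator of Corollary \ref{cor:k upper bound} collapses to $\frac{2d_u}{L}$. One must also keep in mind that Corollary \ref{cor:k upper bound} always places the \emph{larger} degree in the denominator; this is precisely what makes the resulting inequality resolve in the correct direction no matter which of $d_x,d_u$ happens to be larger.
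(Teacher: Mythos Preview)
Your argument is correct and takes a genuinely different route from the paper. The paper proves the lemma by a direct variational argument: it builds a modified $1$-Lipschitz function $\tilde f$ (subtracting $1$ from $f$ at every vertex except $x$ and $u$), computes $\Delta\tilde f(x)-\Delta f(x)=-\frac{d_x-1}{d_x}$ and $\Delta\tilde f(u)-\Delta f(u)=-\frac{d_u-1}{d_u}$, and uses optimality of $f$ for $\kappa(x,u)$ together with $\kappa(x,u)\le \Delta\tilde f(x)-\Delta\tilde f(u)$ to force $\frac{1}{d_u}-\frac{1}{d_x}\le 0$.

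Your proof instead leverages the machinery already built in the paper: Lemma~\ref{lem:xi} to pin down $|N(x)\cap N(u)|=\frac{2d_u}{L}-2$, and then Corollary~\ref{cor:k upper bound} with $D=\max(d_x,d_u)$ in the denominator to get $D\le d_u$. This is essentially the computation the paper carries out in the proof of Lemma~\ref{lem:perfect matching}, except that there the paper first \emph{invokes} Lemma~\ref{lem:pole} to justify putting $d_u$ in the denominator; your observation that the corollary always places the larger degree downstairs shows this invocation is unnecessary and recovers Lemma~\ref{lem:pole} as a byproduct. The paper's approach is self-contained and does not rely on Theorem~\ref{thm:localstructure} or its corollaries, while yours makes fuller use of the paper's own toolkit and in effect merges Lemmas~\ref{lem:pole} and~\ref{lem:perfect matching} into a single argument.
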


\begin{proof}
 Take any $u \in N(x)$. Now we define a new function $\tilde f$:
    $$\tilde f(z)=\begin{cases}
       f(z)  & \mbox{ if } z=x \ \mbox{or} \ u,\\
       f(z)-1  & \mbox{otherwise}.
    \end{cases}$$
Clearly, $\tilde f$ is a $1$-Lipschitz function. We have 
\begin{align*}
\Delta \tilde f(x)-\Delta f(x)&=-\frac{d_x-1}{d_x},\\
\Delta \tilde f(u)-\Delta f(u)&=-\frac{d_u-1}{d_u}.
\end{align*}

From the proof of Lemma \ref{lem:xi}, we know that $f$ is an optimal solution of equation \eqref{eq:curv_laplacian}. Thus,

   \begin{align*}
        \kappa(x,u) &= \nabla_{xu} \Delta f \\
        &=\Delta f(x)-\Delta f(u)\\
        &=\Delta \tilde f(x)-\Delta \tilde f(u)+\frac{1}{d_u}-\frac{1}{d_x}.\\
    \end{align*}

By the definition of Lin-Lu-Yau curvature, we have $\kappa(x,u) \leq \Delta \tilde f(x)-\Delta \tilde f(u)$. Hence, 
\begin{align*}
\frac{1}{d_u}-\frac{1}{d_x} \leq 0,\\
d_x \leq d_u.
\end{align*}
\end{proof}

\begin{lemma} \label{lem:d upper bound}
 For any edge $uv$, we have $$d_u \leq \left(\frac{|N(u) \cap N(v)|+2}{2}\right)L.$$ 
 Moreover, if $d_u = \left(\frac{|N(u) \cap N(v)|+2}{2}\right)L$ for some $v \in N(u)$ with $d_v \leq d_u$, then there is a perfect matching in $H_1(v,u)$.
\end{lemma}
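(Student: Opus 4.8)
The plan is to sandwich $\kappa(u,v)$ between the Bonnet--Myers sharp lower bound $\kappa_{min}=\frac{2}{L}$ and the combinatorial upper bound of Corollary \ref{cor:k upper bound}, and then read off both assertions from the resulting chain of inequalities. Since $G$ is Bonnet--Myers sharp with diameter $L$, the defining relation $L=\mathrm{diam}(G)=\frac{2}{\kappa_{min}}$ gives $\kappa_{min}=\frac{2}{L}$; hence every edge $uv$ satisfies $\kappa(u,v)\geq \kappa_{min}=\frac{2}{L}$ simply by the definition of $\kappa_{min}$ as a minimum over edges. This is the only input needed from the global structure of $G$.

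For the first inequality, I would apply Corollary \ref{cor:k upper bound} to the edge $uv$, being careful to place the larger of the two degrees in the denominator: writing the corollary with the smaller-degree endpoint in the role of ``$x$'' and the larger in the role of ``$y$'', it yields
\[
\kappa(u,v)\leq \frac{|N(u)\cap N(v)|+2}{\max(d_u,d_v)}.
\]
Combining this with $\kappa(u,v)\geq \frac{2}{L}$ gives $\frac{2}{L}\leq \frac{|N(u)\cap N(v)|+2}{\max(d_u,d_v)}$, that is, $\max(d_u,d_v)\leq \left(\frac{|N(u)\cap N(v)|+2}{2}\right)L$. Since $d_u\leq \max(d_u,d_v)$, the claimed bound on $d_u$ follows at once, for any orientation of the edge.

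For the ``moreover'' part, suppose $d_v\leq d_u$ and $d_u=\left(\frac{|N(u)\cap N(v)|+2}{2}\right)L$. Then $\max(d_u,d_v)=d_u$, and substituting this value of $d_u$ shows that $\frac{|N(u)\cap N(v)|+2}{d_u}=\frac{2}{L}$. Thus the sandwich collapses: $\frac{2}{L}\leq \kappa(u,v)\leq \frac{|N(u)\cap N(v)|+2}{d_u}=\frac{2}{L}$ forces $\kappa(u,v)=\frac{|N(u)\cap N(v)|+2}{d_u}$, which is precisely the equality case of Corollary \ref{cor:k upper bound} for the edge with smaller-degree vertex $v$ and larger-degree vertex $u$. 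By that corollary, equality holds if and only if $H_1(v,u)$ admits a perfect matching, giving the conclusion.

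I do not anticipate a genuine obstacle here, since the argument is just a two-sided estimate; the one point requiring care is the orientation of Corollary \ref{cor:k upper bound}, whose denominator must be the \emph{larger} degree. This is exactly why the hypothesis $d_v\leq d_u$ is imposed in the second statement, and why the relevant bipartite graph is $H_1(v,u)$ rather than $H_1(u,v)$; keeping this bookkeeping consistent is the only subtlety.
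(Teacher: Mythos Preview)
Your proposal is correct and follows essentially the same approach as the paper: both sandwich $\kappa(u,v)$ between $\kappa_{min}=\frac{2}{L}$ and the upper bound $\frac{|N(u)\cap N(v)|+2}{\max(d_u,d_v)}$ from Corollary~\ref{cor:k upper bound}, then invoke the equality case of that corollary for the second assertion. Your explicit use of $\max(d_u,d_v)$ and discussion of the orientation is slightly more careful than the paper's phrasing, but the argument is identical.
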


\begin{proof}
Let $v \in N(u)$. We can assume $d_v \leq d_u$ since otherwise we can let $u \in N(v)$ and use the vertex $v$ instead. By Corollary \ref{cor:k upper bound}, we have 
$$\kappa(v,u)\leq \frac{|N(u)\cap N(v)|+2}{d_u}.$$
Since $\kappa(v,u) \geq \kappa_{min}=\frac{2}{L}$, we have 
$$d_u \leq \left(\frac{|N(u) \cap N(v)|+2}{2}\right)L.$$
It is easy to see that when $d_u = \left(\frac{|N(u) \cap N(v)|+2}{2}\right)L$, the upper bound of $\kappa(v,u)$ is achieved and equals to $\frac{2}{L}$. The conclusion follows from Corollary \ref{cor:k upper bound}. 
\end{proof}

\begin{lemma}\label{lem:perfect matching}
For any $u \in N(x)$, there is a perfect matching in $H_1(x,u)$.
\end{lemma}

\begin{proof}
We have $d_x \leq d_u$ by Lemma \ref{lem:pole}. From Lemma \ref{lem:xi}, we have $d^+_u - d^-_u=\left(1- \frac{2}{L}\right) d_{u}$. Since $d^-_u=1$, we have $d^+_u=\left(1- \frac{2}{L}\right) d_{u}+1$. Then, we have \[|N(x) \cap N(u)|=d^0_u=d_u-d^-_u-d^+_u=\frac{2}{L}d_u-2.\]

By Corollary \ref{cor:k upper bound}, we have 
\begin{align*}
\kappa(x,u)& \leq \frac{|N(x) \cap N(u)|+2}{d_u}\\
&=\frac{\frac{2}{L}d_u-2+2}{d_u}\\
&=\frac{2}{L}.
\end{align*}
On the other hand, since $u$ lies on a geodesic from $x$ to $y$, we have $\kappa(x,u)=\frac{2}{L}$ by Lemma \ref{lem:curvature}. Hence, the conclusion follows from Corollary \ref{cor:k upper bound} since the upper bound is achieved. 
\end{proof}

\begin{lemma}\label{lem:private neighbor}
 For any vertex $v \in N_2(x)$, we have $|(N(x) \cap N(v)) \setminus N[u]| \leq \frac{d_v}{d_u}$ for any $u \in N(x) \cap N(v)$.
\end{lemma}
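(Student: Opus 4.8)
The plan is to reduce the statement to a Hall-type inequality for the perfect matching that Lemma~\ref{lem:perfect matching} provides on the pole edge $xu$, and then to bound the size of the neighborhood that appears. Since $u\in N(x)\cap N(v)$, in particular $u\in N(x)$; hence Lemma~\ref{lem:pole} gives $d_x\le d_u$, and Lemma~\ref{lem:perfect matching} furnishes a perfect matching in $H_1(x,u)$. By Corollary~\ref{cor:Hall}, the inequality
\[
c_u\,\bigl|N(T_1\cup T_2)\cap(N(u)\setminus N[x])\bigr|\ \ge\ c_x|T_1|+(c_x-c_u)|T_2|
\]
holds for all $T_1\subseteq N(x)\setminus N[u]$ and $T_2\subseteq N(x)\cap N[u]$. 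Set $T=(N(x)\cap N(v))\setminus N[u]$, the set whose cardinality we must control, and note $T\subseteq N(x)\setminus N[u]$. Because $v\in N_2(x)$ we have $v\in N(u)\setminus N[x]$, and since every $w\in T$ is adjacent to $v$, the vertex $v$ belongs to $N(T)\cap(N(u)\setminus N[x])$.

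Next I would take $T_1=T$ and $T_2=\emptyset$ in the inequality above, which gives
\[
c_x\,|T|\ \le\ c_u\,\bigl|N(T)\cap(N(u)\setminus N[x])\bigr|.
\]
Using $c_xd_x=c_ud_u=\lcm(d_x,d_u)$, so that $c_u/c_x=d_x/d_u$, this becomes
\[
|T|\ \le\ \frac{d_x}{d_u}\,\bigl|N(T)\cap(N(u)\setminus N[x])\bigr|.
\]
Comparing with the desired bound $|T|\le d_v/d_u$, it suffices to prove that $\bigl|N(T)\cap(N(u)\setminus N[x])\bigr|\le d_v/d_x$.

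The hard part is this last inequality, and it is where Bonnet--Myers sharpness must enter. Unwinding the definitions, any $z\in N(T)\cap(N(u)\setminus N[x])$ lies at distance $2$ from $x$ and is adjacent both to $u$ and to some $w\in T$; together with $x$ and $v$ this exhibits $x,v,z$ as common neighbors of the non-adjacent pair $u,w\in N(x)$ and produces the $4$-cycle $u\,z\,w\,v$. My plan is to use the exact curvature $\kappa(x,u)=\kappa(x,w)=\frac{2}{L}$ (Lemma~\ref{lem:curvature}) and the matching description of Lemma~\ref{lem:perfect matching} to show that such configurations are rigid enough that the distance-$2$ common neighbors of $u$ and $w$ cannot proliferate: concretely, that each such $z$ can be charged to $d_x$ distinct neighbors of $v$, giving $d_x\cdot\bigl|N(T)\cap(N(u)\setminus N[x])\bigr|\le d_v$. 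In the regular and extremal cases one checks directly that the set reduces to $\{v\}$ while $d_x=d_v$, so the inequality is tight; confirming it in full irregular generality --- that is, controlling the extraneous common neighbors $z\neq v$ --- is the step I expect to be the main obstacle. Once it is established, the lemma follows immediately from the displayed reduction.
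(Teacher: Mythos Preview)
Your reduction via Hall's condition on the edge $xu$ leaves a genuine gap: the inequality $\bigl|N(T)\cap(N(u)\setminus N[x])\bigr|\le d_v/d_x$ is not proved, and the sketched ``charging'' argument does not go through. That set contains $v$, but nothing prevents it from containing further vertices $z\in N_2(x)\cap N(u)$ adjacent to some $w\in T$; your proposal to charge each such $z$ to $d_x$ distinct neighbors of $v$ has no evident mechanism, since $z$ need not be adjacent to $v$ at all. In particular, when $d_x=d_v$ your target becomes $\bigl|N(T)\cap(N(u)\setminus N[x])\bigr|\le 1$, asserting that $v$ is the \emph{only} vertex of $N^+(u)$ adjacent to a member of $T$ --- a statement that the Hall inequality on $H_1(x,u)$ alone cannot produce, since Hall only lower-bounds such neighborhoods.

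The underlying issue is that you are computing with the wrong edge. The paper works directly with the edge $uv$ and its curvature, not with $xu$. Concretely, perturb the distance function $f=d(x,\bullet)$ by setting $\tilde f(z)=f(z)+1$ for $z\in\{x\}\cup T$ (where $T=(N(x)\cap N(v))\setminus N[u]$) and $\tilde f=f$ elsewhere; one checks $\tilde f$ is $1$-Lipschitz. Since no vertex of $T$ is adjacent to $u$, the only neighbor of $u$ affected is $x$, so $\Delta\tilde f(u)-\Delta f(u)=\tfrac{1}{d_u}$; since $v\in N_2(x)$ is adjacent to every vertex of $T$ but not to $x$, $\Delta\tilde f(v)-\Delta f(v)=\tfrac{|T|}{d_v}$. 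Because $f$ is optimal for $\kappa(u,v)$ (Lemma~\ref{lem:xi}) while $\tilde f$ is merely admissible, $\kappa(u,v)=\nabla_{uv}\Delta f\le \nabla_{uv}\Delta\tilde f$, which rearranges to $\tfrac{1}{d_u}-\tfrac{|T|}{d_v}\ge 0$. This yields $|T|\le d_v/d_u$ in one step, with no Hall-type detour.
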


\begin{proof}
 We define a new function $\tilde f$:
    $$\tilde f(z)=\begin{cases}
       f(z)+1  & \mbox{ if } z=x \ \mbox{or} \ z \in (N(x) \cap N(v)) \setminus N[u],\\
       f(z)  & \mbox{otherwise}.
    \end{cases}$$    
Clearly $\tilde f$ is a $1$-Lipschitz function. We have
\begin{align*}
\Delta \tilde f(u)-\Delta f(u)&=\frac{1}{d_u},\\
\Delta \tilde f(v)-\Delta f(v)&=\frac{|(N(x) \cap N(v)) \setminus N[u]|}{d_v}.
\end{align*}
Taking a difference of the two equations,  we have
\[(\Delta \tilde f(u)-\Delta \tilde f(v))-(\Delta f(u)-\Delta f(v))=\frac{1}{d_u}-\frac{|(N(x) \cap N(v)) \setminus N[u]|}{d_v}.\]

By definition, $\kappa(u,v) \leq \Delta \tilde f(u)-\Delta \tilde f(v)$. Since $f$ is the optimal solution of equation \eqref{eq:curv_laplacian}, $\kappa(u,v)=\Delta f(u)-\Delta f(v)$. We have
\[\frac{1}{d_u}-\frac{|(N(x) \cap N(v)) \setminus N[u]|}{d_v} \geq 0.\]
It implies
\[|(N(x) \cap N(v)) \setminus N[u]| \leq \frac{d_v}{d_u}.\]

\end{proof}

\subsection{Lichnerowicz sharp graphs}
The Ricci curvature is known to be related to the first non-zero eigenvalue, $\lambda_1$, of the normalized combinatorial Laplacian $L=I -D^{-\frac{1}{2}}AD^{-\frac{1}{2}}$. Here $D$ represents the diagonal matrix of degrees, while $A$ denotes the adjacency matrix. It is important to note that $L$ and $-\Delta$ are similar matrices, sharing the same set of eigenvalues.

\begin{lemma}[Lichnerowicz type Theorem on graphs]
\cite{LLY, Ollivier}\label{lem:lambda1}
If for every edge $xy \in E(G)$, $\kappa(x,y) \geq \kappa_1 > 0$, then we have
$$\lambda_1(G)\geq \kappa_1.$$
\end{lemma}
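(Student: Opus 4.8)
The plan is to use the Laplacian characterization of curvature from Theorem \ref{thm:curvature_laplacian} together with a suitably normalized eigenfunction. First I would record that the random-walk Laplacian $-\Delta$ and the normalized Laplacian $L$ are similar matrices (as already noted in the text), so they share the same spectrum; in particular $\lambda_1(G)$ is the smallest positive eigenvalue of $-\Delta$, and on a finite connected graph there is a real, non-constant eigenfunction $g$ with $-\Delta g = \lambda_1 g$, equivalently $\Delta g = -\lambda_1 g$.

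Next I would normalize $g$ to make it $1$-Lipschitz. Set $M = \max_{uv \in E(G)} |g(u) - g(v)|$; since $g$ is non-constant and $G$ is connected, $M > 0$. Choose an edge $xy$ attaining this maximum, oriented so that $g(y) - g(x) = M$, and put $f = g/M$. Then $|f(u) - f(v)| \le 1$ across every edge, and summing along a shortest path upgrades this to $|f(u) - f(v)| \le d(u,v)$ for all pairs, so $f \in Lip(1)$. Moreover $\nabla_{yx} f = f(y) - f(x) = 1$, so $f$ is feasible in the infimum defining $\kappa(x,y)$ in \eqref{eq:curv_laplacian}.

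Applying Theorem \ref{thm:curvature_laplacian} to this feasible $f$ on the edge $xy$ (where $d(x,y)=1$) yields $\Delta f(x) - \Delta f(y) = \nabla_{xy}\Delta f \ge \kappa(x,y) \ge \kappa_1$. On the other hand, since $\Delta f = \Delta g / M = -\lambda_1 g / M = -\lambda_1 f$, I compute $\Delta f(x) - \Delta f(y) = \lambda_1 (f(y) - f(x)) = \lambda_1$. Combining the two inequalities gives $\lambda_1 \ge \kappa_1$, which is the assertion.

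The routine steps (similarity of the two Laplacians, the telescoping that promotes the edge-wise bound to a genuine $1$-Lipschitz bound, and the eigenvalue computation) are standard. The one point that genuinely requires care is the choice of the maximizing edge together with its orientation: the argument hinges on $f$ being exactly $1$-Lipschitz, so that it is admissible in \eqref{eq:curv_laplacian}, while simultaneously realizing $\nabla_{yx} f = 1$ on the tested edge, which is precisely why the maximizing edge is the correct one to probe. For locally finite infinite graphs one would additionally need to guarantee the existence of an eigenfunction realizing $\lambda_1$, but for the finite graphs relevant here (and in the cited references) this is automatic.
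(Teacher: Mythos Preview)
The paper does not supply its own proof of this lemma; it is simply quoted from \cite{LLY, Ollivier} as a known result and used as a black box in the proof of Theorem~\ref{thm:Lich}. So there is no in-paper argument to compare against.

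Your argument is correct and is essentially the standard one. The only comment is that you are implicitly using Theorem~\ref{thm:curvature_laplacian} (the M\"unch--Wojciechowski Laplacian formulation) rather than the original transportation-distance proofs in \cite{LLY, Ollivier}; this is perfectly legitimate in the context of the present paper, and arguably cleaner, since the inequality $\kappa(x,y)\le \nabla_{xy}\Delta f$ for any admissible $f$ is exactly what is needed. Your handling of the one genuine subtlety---choosing the edge that maximizes $|g(u)-g(v)|$ so that the rescaled eigenfunction is simultaneously $1$-Lipschitz and attains $\nabla_{yx}f=1$ on that edge---is spot on.
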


We say a connected graph $G$ is {\em Lichnerowicz sharp} if $\lambda_1(G)=\kappa_{min}$. 
Cushing-Kamtue-Koolen-Liu-M\"unch-Peyerimhoff  \cite{CKKLMP} 
proved the following theorem.
\begin{theorem} \cite{CKKLMP} 
\label{thm:DLLich}
  Every $(D,L)$-Bonnet-Myers sharp graph $G = (V,E)$ is Lichnerowicz
  sharp with Laplace eigen-function $g(\bullet) = d(x,\bullet) - \frac{L}{2}$,
  where $x$ is a pole of $G$.
\end{theorem}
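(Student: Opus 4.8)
The plan is to exhibit $g(\bullet) = d(x,\bullet) - \frac{L}{2}$ as an eigenfunction of the normalized Laplacian with eigenvalue exactly $\frac{2}{L} = \kappa_{min}$, and then to pin $\lambda_1(G)$ down to this value by a two-sided bound. First I would record two preliminary facts. Since $G$ is Bonnet-Myers sharp, the defining identity $L = \frac{2}{\kappa_{min}}$ gives $\kappa_{min} = \frac{2}{L}$. Since $G$ is $D$-regular, the diagonal degree matrix $D$ is scalar, so $D^{-1/2} A D^{-1/2} = D^{-1} A$; hence the normalized Laplacian coincides with $-\Delta = I - D^{-1} A$, and it suffices to work with $-\Delta$ throughout.

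The core step is a pointwise formula for $\Delta f$, where $f = d(x,\bullet)$. By Lemma \ref{lem:interval} every vertex $u$ lies on a geodesic from $x$ to its anti-pole $y$, so Lemma \ref{lem:xi} applies at $u$ and yields
\[
\Delta f(u) = 1 - \frac{2\,d(x,u)}{L} \qquad \text{for every } u \in V(G).
\]
Because $g$ and $f$ differ by the constant $\frac{L}{2}$, we have $\Delta g = \Delta f$, and substituting $d(x,u) = g(u) + \frac{L}{2}$ gives
\[
-\Delta g(u) = \frac{2\,d(x,u)}{L} - 1 = \frac{2}{L}\Big(g(u) + \frac{L}{2}\Big) - 1 = \frac{2}{L}\, g(u).
\]
Thus $g$ is an eigenfunction of $-\Delta$ with eigenvalue $\frac{2}{L}$. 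Since $g(x) = -\frac{L}{2} \neq \frac{L}{2} = g(y)$, the function $g$ is non-constant, and as $G$ is connected the kernel of $-\Delta$ consists only of constants; hence $\frac{2}{L}$ is a \emph{nonzero} eigenvalue, so that $\lambda_1(G) \leq \frac{2}{L}$.

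To obtain the matching lower bound I would invoke the Lichnerowicz-type estimate of Lemma \ref{lem:lambda1}: every edge satisfies $\kappa \geq \kappa_{min} = \frac{2}{L}$, so $\lambda_1(G) \geq \frac{2}{L}$. Combining the two inequalities gives $\lambda_1(G) = \frac{2}{L} = \kappa_{min}$, which is precisely the assertion that $G$ is Lichnerowicz sharp, with $g$ the associated eigenfunction. The only real obstacle is upgrading Lemma \ref{lem:xi} — stated only for the vertices of a single fixed shortest $xy$-path — into the \emph{pointwise} identity valid at every vertex; this is supplied exactly by Lemma \ref{lem:interval}, which guarantees $[x,y] = V(G)$ so that every vertex sits on some geodesic. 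Once that pointwise formula is in hand, the eigen-equation is a one-line substitution and the spectral sandwich closes immediately.
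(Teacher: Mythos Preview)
Your proposal is correct and follows essentially the same route as the paper: the paper does not prove Theorem~\ref{thm:DLLich} separately (it is cited from \cite{CKKLMP}), but its proof of the more general Theorem~\ref{thm:Lich} proceeds exactly as you do, using Lemma~\ref{lem:xi} together with $[x,y]=V(G)$ (Lemma~\ref{lem:interval}/Corollary~\ref{cor:4}) to compute $\Delta g(u)=-\tfrac{2}{L}g(u)$ pointwise, and then invoking the Lichnerowicz lower bound. Your explicit check that $g$ is non-constant, ensuring $\tfrac{2}{L}$ is genuinely a nonzero eigenvalue, is a detail the paper's proof leaves implicit.
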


We extend their result to all Bonnet-Myers sharp graphs.
\begin{theorem} \label{thm:Lich}
  Every Bonnet-Myers sharp graph $G = (V,E)$ is Lichnerowicz
  sharp with Laplace eigen-function $g(\bullet) = d(x,\bullet) - \frac{L}{2}$,
  where $x$ is a pole of $G$.
\end{theorem}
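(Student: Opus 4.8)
The plan is to show that $g(\bullet)=d(x,\bullet)-\frac{L}{2}$ is an eigenfunction of $-\Delta$ with eigenvalue exactly $\kappa_{\min}=\frac{2}{L}$, which is the smallest possible nonzero eigenvalue by the Lichnerowicz bound (Lemma \ref{lem:lambda1}), thereby establishing $\lambda_1(G)=\kappa_{\min}$ and Lichnerowicz sharpness simultaneously. Writing $f(\bullet)=d(x,\bullet)$ as in Corollary \ref{cor:4}, we have $g=f-\frac{L}{2}$, so $-\Delta g=-\Delta f$ pointwise (the constant shift is annihilated by $\Delta$). Hence it suffices to prove that for every vertex $u\in V(G)$,
\begin{equation}\label{eq:eig-target}
-\Delta f(u)=\frac{2}{L}\left(f(u)-\frac{L}{2}\right)=\frac{2}{L}g(u).
\end{equation}

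The key input is Lemma \ref{lem:xi}, whose computation is not merely stated for vertices on a fixed $xy$-geodesic: since $[x,y]=V(G)$ by Lemma \ref{lem:interval}, \emph{every} vertex $u$ of $G$ lies on some shortest $xy$-path, so the identity $\Delta f(u)=1-\frac{2}{L}d(x,u)$ (derived in the proof of Lemma \ref{lem:xi}, and reused in the proof of Lemma \ref{lem:curvature}) holds for all $u\in V(G)$. First I would invoke this to write $\Delta f(u)=1-\frac{2}{L}f(u)$ for every $u$. Then a direct rearrangement gives
\[
-\Delta f(u)=\frac{2}{L}f(u)-1=\frac{2}{L}\left(f(u)-\frac{L}{2}\right)=\frac{2}{L}g(u),
\]
which is exactly \eqref{eq:eig-target}. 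Thus $g$ is an eigenfunction of $-\Delta$ with eigenvalue $\frac{2}{L}$.

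To conclude, I would verify that $\frac{2}{L}$ is genuinely the first nonzero eigenvalue. The eigenvalue $\frac{2}{L}$ is nonzero and $g$ is nonconstant (since $f$ ranges over $0,1,\ldots,L$ by connectedness and $d(x,y)=L$), so $g$ is not proportional to the constant eigenfunction belonging to eigenvalue $0$; hence $\frac{2}{L}$ is an eigenvalue of $-\Delta$ on the space orthogonal to constants. On the other hand, Lemma \ref{lem:lambda1} applied with $\kappa_1=\kappa_{\min}=\frac{2}{L}$ gives $\lambda_1(G)\ge\frac{2}{L}$, so no smaller positive eigenvalue exists; therefore $\lambda_1(G)=\frac{2}{L}=\kappa_{\min}$ with eigenfunction $g$, proving Lichnerowicz sharpness. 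The only subtlety to be careful about — and the step I expect to require the most attention — is confirming that the pointwise identity $\Delta f(u)=1-\frac{2}{L}f(u)$ is valid at \emph{every} vertex rather than only along a single chosen geodesic; this is precisely where the global structural fact $[x,y]=V(G)$ (Lemma \ref{lem:interval}) is essential, since it guarantees each $u$ sits on an $xy$-geodesic and hence falls under the scope of Lemma \ref{lem:xi}.
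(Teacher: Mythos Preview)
Your proposal is correct and follows essentially the same approach as the paper: both compute $\Delta g(u)=1-\frac{2}{L}d(x,u)=-\frac{2}{L}g(u)$ via Lemma~\ref{lem:xi}, relying on $[x,y]=V(G)$ (Lemma~\ref{lem:interval}) so that the identity applies at every vertex, and then invoke the Lichnerowicz lower bound (Lemma~\ref{lem:lambda1}) to conclude $\lambda_1=\frac{2}{L}$. Your write-up is in fact slightly more careful than the paper's in explicitly naming Lemma~\ref{lem:interval} as the reason the computation is valid globally and in checking that $g$ is nonconstant, but these are presentational refinements rather than a different argument.
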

\begin{proof}
Since $\lambda_1\geq \kappa_{min}=\frac{2}{L}$, it suffices to show $\frac{2}{L}$ is an eigenvalue for $-\Delta$.

Let $f(\bullet)=d(x, \bullet)$ denote the distance function to $x$.
Applying Lemma \ref{lem:xi} and Corollary \ref{cor:4},
we have
\begin{align*}
    \Delta g(u) &= \frac{1}{d_u} \sum_{v\in N(u)} (g(v)-g(u))\\
    &= \frac{1}{d_u} \sum_{v\in N(u)} (f(v)-f(u))\\
    &=\frac{1}{d_u} (d^+_u -d^-_u)\\
    &=1-\frac{2d(x,u)}{L}\\
    &=1- \frac{2(g(u)+\frac{L}{2})}{L}\\
    &=-\frac{2}{L}g(u).
\end{align*}
Therefore $g$ is the eigenvector of the Laplace $\Delta$ with eigenvalue $-\kappa_{min}$.

   \end{proof}

\section{Bonnet-Myers sharp graphs of diameter 3}

Cushing-Kamtue-Koolen-Liu-M\"unch-Peyerimhoff completely classified self-centered Bonnet-Myers sharp graphs of diameter $3$ in \cite{CKKLMP}. There are only four graphs:  the hypercube $Q_3$, the Johnson graph $J(6,3)$, the demi-cubes $Q_{(2)}^{6}$, and the Gosset graph. In this section, we discard the assumptions of self-centeredness and regularity, and outline the necessary conditions for a general diameter $3$ graph to be Bonnet-Myers sharp. We will show that an irregular Bonnet-Myers sharp graph of diameter 3 has a single unique pair of poles; otherwise, it is regular. Kamtue demonstrated in \cite{Kamtue} that every regular Bonnet-Myers sharp graph of diameter $3$ is self-centered. Thus, $G$ must be one of the four graphs if it is regular.
We will conclude this section by presenting infinitely many examples of irregular Bonnet-Myers sharp graphs of diameter $3$.

In this section, we assume $G$ is a Bonnet-Myers sharp graph of diameter $3$ and $(x,y)$ is a pair of poles.

\begin{lemma}\label{lem:neighbor relation}
For any $u \in N(x)$ and $v \in N(y)$, we have
\begin{align}
d^0_u &= 2d^+_u-4,\\
d^0_v &= 2d^-_v-4.
\end{align}
In particular, $d^0_u$ and $d^0_v$ are divisible by $2$.
\end{lemma}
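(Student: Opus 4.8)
The plan is to apply Lemma~\ref{lem:xi} to the vertex $u\in N(x)$, which lies at distance $1$ from the pole $x$, and separately to $v\in N(y)$, which lies at distance $1$ from the pole $y$. Since $diam(G)=L=3$ here, and since $[x,y]=V(G)$ by Lemma~\ref{lem:interval}, every vertex lies on some geodesic from $x$ to $y$, so the hypotheses needed to invoke the degree-balance identity \eqref{eq:17} are in place. Throughout I take $f(\bullet)=d(x,\bullet)$, the distance function from $x$, which is the optimal $1$-Lipschitz function by Corollary~\ref{cor:4}.

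First I would handle $u\in N(x)=N_1(x)$. Applying Equation~\eqref{eq:17} with $i=1$ and $L=3$ gives $d^+_u-d^-_u=(1-\tfrac{2}{3})d_u=\tfrac{1}{3}d_u$. The key observation is that $u\in N(x)$ has exactly one neighbor at distance $0$ from $x$, namely $x$ itself; since $x$ is a pole and no other vertex is at distance $0$, we get $d^-_u=|N^-_f(u)|=1$. Combining this with $d_u=d^+_u+d^0_u+d^-_u$, I would eliminate $d_u$: from $d^+_u-1=\tfrac{1}{3}d_u$ we get $d_u=3(d^+_u-1)=3d^+_u-3$, hence $d^0_u=d_u-d^+_u-d^-_u=(3d^+_u-3)-d^+_u-1=2d^+_u-4$, which is the first claimed identity.

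For $v\in N(y)=N_{L-1}(x)=N_2(x)$, I would run the symmetric argument. Applying Equation~\eqref{eq:17} with $i=2$ and $L=3$ gives $d^+_v-d^-_v=(1-\tfrac{4}{3})d_v=-\tfrac{1}{3}d_v$, i.e.\ $d^-_v-d^+_v=\tfrac{1}{3}d_v$. Here the role of the unique extreme neighbor is played by the anti-pole: since $v\in N(y)$ and $y$ is the unique vertex at distance $3$ from $x$ (so $f(y)=3$ is the maximum), $v$ has exactly one neighbor $w$ with $f(w)=f(v)+1=3$, namely $y$, giving $d^+_v=1$. Then $d_v=3(d^-_v-1)=3d^-_v-3$ and $d^0_v=d_v-d^+_v-d^-_v=(3d^-_v-3)-1-d^-_v=2d^-_v-4$, as claimed. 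The divisibility-by-$2$ conclusion is then immediate from both identities being twice an integer.

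The main thing to get right, rather than a genuine obstacle, is the justification that $d^-_u=1$ and $d^+_v=1$; these rest on $(x,y)$ being a pair of \emph{poles} with $[x,y]=V(G)$ and on $L=3$ forcing $x$ and $y$ to be the unique vertices at distances $0$ and $3$ respectively (uniqueness of the anti-pole was established just after Lemma~\ref{lem:interval}). One should also confirm that Lemma~\ref{lem:xi} applies to $u$ and $v$ through their containing geodesics, which is guaranteed since every vertex lies on an $xy$-geodesic; the remaining steps are purely the linear elimination of $d_u$ and $d_v$ carried out above.
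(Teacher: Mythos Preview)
Your proof is correct and follows essentially the same approach as the paper: apply the degree-balance identity from Lemma~\ref{lem:xi} with $L=3$, use $d^-_u=1$ (respectively $d^+_v=1$), and eliminate $d_u$ (respectively $d_v$) from the two linear relations. If anything, you supply more justification than the paper does for the facts $d^-_u=1$ and $d^+_v=1$.
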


\begin{proof}
By Lemma \ref{lem:xi}, we have 
\begin{equation}\label{eq:15}
d^+_u - d^-_u=\frac{1}{3}d_{u}.
\end{equation}
Observe that 
\begin{equation}\label{eq:16}
d_{u}=d^+_u+d^-_u+d^0_u.
\end{equation}
Since $d^-_u=1$, combining the Equations \ref{eq:15} and \ref{eq:16}, we have
\[d^0_u = 2d^+_u-4\]
The proof for the second item follows a symmetric argument.
\end{proof}

\begin{lemma} \label{lem:middle regular1}
 For any $u \in N(x)$ and $v \in N(y)$ such that $uv \in E(G)$, we have $d_u=d_v$.      
\end{lemma}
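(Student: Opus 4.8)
The plan is to use the fact that both $u\in N(x)$ and $v\in N(y)$ lie on a shortest $xy$-path, which forces $d(x,u)=1$, $d(x,v)=2$, and $\kappa(u,v)=\frac{2}{L}=\frac{2}{3}$ by Lemma~\ref{lem:curvature}. I would assume for contradiction that $d_u\neq d_v$, and by symmetry (swapping the roles of $x,y$ and $u,v$) suppose $d_u<d_v$. Applying Lemma~\ref{lem:neighbor relation} to $u$ gives $d^0_u=2d^+_u-4$ with $d^-_u=1$, so $d_u=1+d^+_u+(2d^+_u-4)=3d^+_u-3$, hence $d_u=3(d^+_u-1)$; applying it to $v$ gives $d^0_v=2d^-_v-4$ with $d^+_v=1$, so $d_v=3(d^-_v-1)$. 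Thus both degrees are divisible by $3$, and the strict inequality $d_u<d_v$ will translate into $d^+_u-1<d^-_v-1$.

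Next I would exploit the perfect-matching structure coming from Lemma~\ref{lem:d upper bound} together with Corollary~\ref{cor:k upper bound}. Since $\kappa(u,v)=\frac{2}{L}$ attains the upper bound of Corollary~\ref{cor:k upper bound}, there is a perfect matching in the blow-up graph $H_1(u,v)$ (taking $u$ as the smaller-degree endpoint, after the WLOG reduction $d_u\le d_v$). The matching in $H_1$ encodes a mass-transfer between the blow-up multisets $X(u,v)$ and $Y(u,v)$ in which every transported unit travels distance exactly $1$, i.e.\ along an actual edge of $G$. The key is to count the total masses: $\|\mu_u\|_1=\|\mu_v\|_1$ by the computation in Section~3, and these total masses are $c_u\cdot|N(u)\setminus N[v]|+(c_u-c_v)\cdot|N(u)\cap N(v)|$ and $c_v\cdot|N(v)\setminus N[u]|$ respectively, where $c_u,c_v$ are the relatively prime integers with $\lcm(d_u,d_v)=c_ud_u=c_vd_v$.

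The main obstacle, and the heart of the argument, is to convert the matching/counting identities into a contradiction with $d_u<d_v$. My approach would be to track the vertex $x$ and its behaviour: $x\in N(u)\setminus N[v]$ contributes $c_u$ copies to $X(u,v)$, but since $d(x,w)\ge 1$ for every $w\in Y(u,v)$ and in fact the relevant neighbours of $v$ on the far side are at distance $2$ from $x$, Corollary~\ref{cor:matchingdirection} constrains where the copies of $x$ can be matched (they must be sent rightward, i.e.\ to strictly larger $f$-value). This rigidity, combined with the exact equality of total masses and the requirement that the matching use only distance-$1$ edges, should over-determine the system unless $c_u=c_v$, which (as $c_u,c_v$ are coprime) forces $c_u=c_v=1$ and hence $d_u=d_v$. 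The delicate point I expect to wrestle with is ruling out the possibility that the asymmetry in the blow-up factors $c_u\ne c_v$ is absorbed by the $N(u)\cap N(v)$ term; here I would use Lemma~\ref{lem:private neighbor} applied at the pole $x$ to bound $|(N(x)\cap N(v))\setminus N[u]|$ by $\frac{d_v}{d_u}$, which when $d_u<d_v$ is a genuine constraint, and push this against the degree formulas $d_u=3(d^+_u-1)$, $d_v=3(d^-_v-1)$ to close the gap and conclude $d_u=d_v$.
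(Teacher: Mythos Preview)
Your proposal has a genuine gap at the step where you claim a perfect matching in $H_1(u,v)$. Corollary~\ref{cor:k upper bound} gives equality (and hence the matching) only when $\kappa(u,v)=\frac{|N(u)\cap N(v)|+2}{d_v}$. You know $\kappa(u,v)=\frac{2}{3}$ from Lemma~\ref{lem:curvature}, but you have not shown $|N(u)\cap N(v)|=\frac{2}{3}d_v-2$; the bound $\kappa\leq\frac{|N(u)\cap N(v)|+2}{d_v}$ could be strict. Lemma~\ref{lem:d upper bound} has the same issue: equality there is an extra hypothesis, not a consequence of Bonnet--Myers sharpness at this edge. So the perfect-matching structure you want to exploit is not available, and the rest of your plan (tracking $x$'s mass, invoking Lemma~\ref{lem:private neighbor}) never gets off the ground. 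Note also that Corollary~\ref{cor:matchingdirection} applied to $x\in X(u,v)$ only says $d(x,w)>0$, which is vacuous.

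The paper's argument avoids this by working directly with an optimal coupling $\sigma$ for $\kappa(u,v)$ and applying Lemma~\ref{lem:duality2} with the distance function $f=d(x,\cdot)$. The decisive observation concerns $N^+(u)$, not $x$: every $z\in N^+(u)$ has $f(z)=2$, so if $\sigma(z,w)>0$ then $f(w)=2+d(z,w)\geq 3$, forcing $w=y$. Thus \emph{all} mass sitting on $N^+(u)$ in $\mu_u$ must be delivered to the single vertex $y$, whose capacity in $\mu_v$ is only $c_v$. This immediately forces $N^+(u)\setminus N[v]=\emptyset$ (each such vertex would contribute $c_u>c_v$), and then yields $(c_u-c_v)d^+_u\leq c_v$, i.e.\ $\bigl(\tfrac{d_v}{d_u}-1\bigr)d^+_u\leq 1$. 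Combining with your own identities $d_u=3(d^+_u-1)$, $d_v=3(d^-_v-1)$ gives $d^+_u<d^-_v<d^+_u+1$, contradicting integrality. The moral: look at the vertices of $N(u)$ at the \emph{highest} $f$-level, where the ``must move rightward'' constraint is most restrictive, rather than at $x$ where it is weakest.
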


\begin{proof}
Suppose $d_u \neq d_v$. Without loss of generality, assume $d_u < d_v$. Let $\lcm(d_u,d_v)=c_ud_u=c_vd_v$. We have $c_u >c_v$ since $d_u < d_v$. Let $\sigma$ be an optimal coupling between $\mu_u$ and $\mu_v$. 

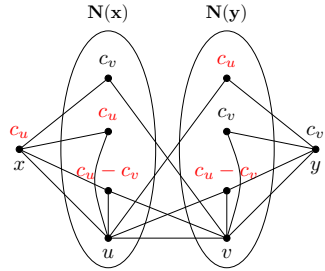
\begin{figure}[htb]
\begin{center}
     \resizebox{4.5cm}{!}{\begin{tikzpicture}[scale=1, Wvertex/.style={circle, draw=black, fill=white, scale=3}, bvertex/.style={circle, draw=black, fill=black, scale=0.3},rvertex/.style={circle, draw=red, fill=red, scale=0.2}]

{
\node[bvertex, label={[font=\small] below:$x$},  label={[font=\small] above:\textcolor{red}{$c_u$}}] (v1) at (-0.5,0) {};
\node[bvertex, label={[font=\small] below:$u$}] (v2) at (1,-1.5) {};
\node[bvertex, label={[font=\small] below:$v$}] (v3) at (3,-1.5) {};
\node[bvertex, label={[font=\small] below:$y$}, label={[font=\small] above:{$c_v$}}] (v4) at (4.5,0) {};
\node[bvertex, label={[font=\small] above:\textcolor{red}{$c_u-c_v$}}] (v5) at (3,-0.7) {};
\node[bvertex, label={[font=\small] above:\textcolor{red}{$c_u$}}] (v6) at (3,1.2) {};
\node[bvertex, label={[font=\small] above:{$c_v$}}] (v7) at (3,0.3) {};
\node[bvertex, label={[font=\small] above:\textcolor{red}{$c_u-c_v$}}] (v8) at (1,-0.7) {};
\node[bvertex, label={[font=\small] above:{$c_v$}}] (v9) at (1,1.2) {};
\node[bvertex, label={[font=\small] above:\textcolor{red}{$c_u$}}] (v10) at (1,0.3) {};
\draw(1,0) ellipse (0.8 and 2);
\draw (3,0) ellipse (0.8 and 2);
\draw (v2)--(v3);
\draw (v2)--(v5);
\draw (v2)--(v6);
\draw (v3)--(v5);
\draw (v3)--(v8);
\draw (v3)--(v9);
\draw (v2)--(v8);
\draw (v1)--(v8);
\draw (v1)--(v2);
\draw (v1)--(v9);
\draw (v1)--(v10);
\draw (v4)--(v3);
\draw (v4)--(v5);
\draw (v4)--(v6);
\draw (v4)--(v7);

\node[scale=0.8] (l1) at (1,2.3) {{$\bf N(x)$}};
\node[scale=0.8] (l1) at (3,2.3) {{$\bf N(y)$}};
\draw (v3) .. controls (3.3,-0.5) .. (v7);
\draw (v2) .. controls (0.7,-0.5) .. (v10);
}
\end{tikzpicture}}    
\end{center}
    \caption{ Mass distributions $\mu_u$ (in red color) and $\mu_v$ (in black color).}
    \label{fig:example2}
\end{figure}

For $z \in N^+(u)$, if $\sigma(z,w) \ne 0$, then by Lemma \ref{lem:duality2} we have $f(w)-f(z)=d(z,w)$. Since $f(z)=2$ and $d(z,w)\geq 1$, we have $f(w)=3$ so we must have $w=y$. This implies that the masses from $N^+(u)$ are all transported to $y$. We have 
\[c_u|(N(u) \setminus N[v]) \cap N(y)|+(c_u-c_v)(d^+_u-|(N(u) \setminus N[v]) \cap N(y)|)\leq c_v.\]
Since $c_u >c_v$, we must have $|(N(u) \setminus N[v]) \cap N(y)|=0$. Thus, we have 
\[(c_u-c_v)d^+_u\leq c_v.\]
It implies
\begin{equation}\label{eq:dvdu1}
   \left(\frac{d_v}{d_u}-1\right)d^+_u\leq 1. 
\end{equation}
By Lemma \ref{lem:xi}, we have 
\begin{align}
\frac{1}{3}d_u&=d^+_u-d^-_u=d^+_u-1,\\
\frac{1}{3}d_v&=d^-_v-d^+_v=d^-_v-1.
\end{align}
It implies
\begin{equation}\label{eq:dvdu2}\frac{d_v}{d_u}=\frac{d^-_v-1}{d^+_u-1}.
\end{equation}
Plugging Equation \eqref{eq:dvdu2} into Equation \eqref{eq:dvdu1}, we get
\begin{equation}\label{eq:dvdu3}
   \left(\frac{d^-_v-1}{d^+_u-1}-1\right)d^+_u\leq 1. 
\end{equation}
Solving $d^-_v$, we get
\[d^-_v\leq d^+_u + \frac{d^+_u-1}{d^+_u}< d^+_u+1.\]

On the other hand, since $d_v>d_u$, we have 
\[\frac{d^-_v-1}{d^+_u-1}>1.\]
This gives
\[d^-_v>d^+_u.\]
Since both $d^+_u$ and $d^-_v$ are integers, it is impossible to have $d^+_u<d^-_v<d^+_u+1$. Contradiction! Hence, we have $d_u=d_v$.

\end{proof}

\begin{lemma}\label{lem:middle regular2}
 All vertices in $V(G) \setminus \{x,y\}$ have the same degree.     
\end{lemma}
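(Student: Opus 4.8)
The plan is to reduce the statement to a connectivity question about the ``cross edges'' of $G$, and then settle that connectivity using the perfect matchings supplied by Lemma \ref{lem:perfect matching}.

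First I would fix the global picture. Since $\mathrm{diam}(G)=3$, Lemma \ref{lem:interval} gives $[x,y]=V(G)$, so every vertex $z$ satisfies $d(x,z)+d(z,y)=3$; together with the uniqueness of the anti-pole this yields $V(G)\setminus\{x,y\}=N(x)\sqcup N(y)$, where $N(y)=N_2(x)$ and $N(x)\cap N(y)=\emptyset$. Writing $f=d(x,\bullet)$ and using $d^-_u=1$ for $u\in N(x)$, Lemma \ref{lem:neighbor relation} gives $d_u=d^+_u+d^0_u+1=3(d^+_u-1)$, and symmetrically $d_v=3(d^-_v-1)$ for $v\in N(y)$; in particular $d^+_u\ge 2$ and $d^-_v\ge 2$. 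Let $B$ be the bipartite graph on $N(x)\cup N(y)$ whose edges are the edges of $G$ joining $N(x)$ to $N(y)$; the $B$-degree of $u\in N(x)$ is $d^+_u$ and that of $v\in N(y)$ is $d^-_v$, so $B$ has no isolated vertices. By Lemma \ref{lem:middle regular1} every edge of $B$ joins two vertices of equal degree. Hence if $B$ is connected, all middle vertices share one degree and we are done; the whole problem is therefore to prove that $B$ is connected, together with one degenerate case.

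The main step will use Lemma \ref{lem:perfect matching}. Suppose first that some middle vertex has degree strictly larger than its pole's degree; by the $x\leftrightarrow y$ symmetry (both are poles, each the anti-pole of the other) we may assume there is $u\in N(x)$ with $d_u>d_x$. Apply Lemma \ref{lem:perfect matching} to the edge $xu$ to obtain a perfect matching $M$ in $H_1(x,u)$. Here $X=\bigcup_{w\in N(x)}S_w$ and $Y=\bigcup_{w\in N^+(u)}S_w$ (since $N(u)\setminus N[x]=N^+(u)$), and the edges of $H_1(x,u)$ are exactly the blow-ups of the cross edges between $N(x)$ and $N^+(u)$. Because $d_x<d_u$ we have $c_x-c_u\ge 1$ and $c_x\ge 2$, so $S_w\neq\emptyset$ for \emph{every} $w\in N(x)$. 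As $M$ is a perfect matching using only weight-$1$ edges, each copy in each $S_w$ is matched along a cross edge; thus every $w\in N(x)$ is adjacent to some vertex of $N^+(u)$. Since $N^+(u)$ lies in the $B$-component of $u$, this forces all of $N(x)$ into that component, and since each $v\in N(y)$ has a cross-neighbor in $N(x)$ (as $d^-_v\ge 2$), all of $N(y)$ lies there too. Hence $B$ is connected, and Lemma \ref{lem:middle regular1} propagates the degree equality across $B$.

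It remains to treat the complementary case, where no middle vertex exceeds its pole in degree, i.e.\ $d_u=d_x$ for all $u\in N(x)$ and $d_v=d_y$ for all $v\in N(y)$ (recall $d_x\le d_u$ and $d_y\le d_v$ by Lemma \ref{lem:pole}). Choosing any cross edge $uv$, which exists because $d^+_u\ge 2$, and applying Lemma \ref{lem:middle regular1} gives $d_x=d_u=d_v=d_y$, so every middle vertex has degree $d_x=d_y$. I expect the perfect-matching step of the previous paragraph to be the crux: the key point is that with $d_x<d_u$ the blow-up $X$ contains a copy of \emph{every} neighbor of $x$, so a perfect matching in the weight-$1$ graph $H_1(x,u)$ cannot exist unless every neighbor of $x$ reaches the cross-neighborhood $N^+(u)$, which is precisely what globally connects $B$. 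Verifying carefully that $S_w\neq\emptyset$ for all $w\in N(x)$, and that $H_1(x,u)$ uses only cross edges, is the delicate bookkeeping underlying this argument.
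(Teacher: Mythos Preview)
Your proof is correct and uses the same core idea as the paper: apply Lemma~\ref{lem:perfect matching} to an edge $xu$ so that the perfect matching in $H_1(x,u)$ forces every $w\in N(x)$ to have a cross-edge into $N^+(u)$, yielding connectivity and hence constant degree via Lemma~\ref{lem:middle regular1}.

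Your treatment is in fact somewhat more careful than the paper's. The paper argues directly that $G\setminus\{x,y\}$ is connected and asserts that ``for any $u_2\in N(x)$ there exists $v\in N(u_1)\cap N(y)$ with $u_2v\in E(G)$,'' but when $d_x=d_{u_1}$ the blow-up copy $S_{u_2}$ is empty for $u_2\in N(x)\cap N(u_1)$, so the matching does not literally give such an edge. You avoid this by (i) working with the bipartite cross-edge graph $B$, to which Lemma~\ref{lem:middle regular1} applies edge-by-edge, and (ii) splitting off the degenerate case $d_u=d_x$ for all $u\in N(x)$ and $d_v=d_y$ for all $v\in N(y)$, where a single cross edge and Lemma~\ref{lem:middle regular1} already give $d_x=d_y$. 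This is a refinement rather than a different route; the decisive observation---that $d_u>d_x$ makes $c_x-c_u\ge 1$, so \emph{every} $S_w$ is nonempty and the perfect matching in $H_1(x,u)$ reaches every $w\in N(x)$---is exactly what drives both arguments.
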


\begin{proof}
By Lemma \ref{lem:middle regular1}, the degree of vertices are the same for any connected component of $G\setminus\{x,y\}$.
It suffices to show $G\setminus \{x,y\}$ is connected.
For any vertex $u_1\in N(x)$, by Lemma \ref{lem:perfect matching}, there is a perfect matching in $H_1(x,u_1)$. This implies that
for any $u_2\in N(x)$ there exists a vertex $v\in N(u_1)\cap N(y)$ such that
$u_2v$ is an edge. Hence, $G\setminus\{x,y\}$ is connected.
\end{proof}

\begin{proof}[Proof of theorem \ref{thm:structural thm}]
Proof of Item 1: For $u \in N(x)$, we have $d^+_u-d^-_u=\frac{1}{3}d_u$ by Lemma \ref{lem:xi}. We observe that $d^-_u=1$ so $d_u=3d^+_u-3$. Similarly, we have $d_v=3d^-_v-3$ for $v \in N(y)$. By Lemma \ref{lem:middle regular2}, we have $d_u=d_v$ so $d^+_u=d^-_v$. Consider the edges between $N(x)$ and $N(y)$. On one hand, each vertex in $N(x)$ has $d^+_u$ neighbors in $N(y)$. On the other hand, each vertex in $N(y)$ has $d^-_v$ neighbors in $N(x)$. Thus, we have
\[d_xd^+_u=d_yd^-_v.\] 
Hence, we must have $d_x=d_y=\frac{n-2}{2}$. 

Now we will show that $G$ only has a unique pair of poles $x,y$. If $G$ has another pair of poles, say $x',y' \in V(G) \setminus \{x,y\}$, then we have that $d_{x'}=d_{y'}=\frac{n-2}{2}$ using the same argument above. Since all vertices in $ V(G) \setminus \{x,y\}$ have the same degree, we have $G$ is $\frac{n-2}{2}$-regular. Contradiction to $G$ being irregular.

Proof of Item 2: Let $v \in N(y)$. We observe that it is sufficient to show that $|(N(x) \cap N(v)) \setminus N[u]| \leq 1$ for any $u \in N(x) \cap N(v)$. Let $u \in N(x) \cap N(v)$. Then from Lemma \ref{lem:middle regular2}, we have $d_u=d_v$. By Lemma \ref{lem:private neighbor}, we have
\[|(N(x) \cap N(v)) \setminus N[u]| \leq \frac{d_v}{d_u}=1.\]

Proof of Item 3:  We can only consider a vertex $u \in N(x)$ because all vertices in $V(G) \setminus \{x,y\}$ have the same degree. From Lemma \ref{lem:neighbor relation}, we have $d^0_u=2d^+_u-4$ and $d^0_u$ is divisible by $2$. Thus, we have $d^0_u=2r$ and $d^+_u=r+2$ for some $r \in \mathbb{N}$. Then we have $d_u=d^+_u+d^0_u+d^-_u=r+2+2r+1=3(r+1)$. Let $t=|N(x) \setminus N(u)|$. Then $d_x=2r+t$.

Now we will show that $r$ and $t$ must satisfy $1\leq t \leq \frac{r}{2}+2$. Let $u \in N(x)$. Since $G$ is irregular, by Lemma \ref{lem:pole} we must have $d_x < d_u$. Let $\lcm(d_x,d_u)=c_xd_x=c_ud_u$. We have $c_x > c_u$. We consider the curvature of the edge $xu$. By Lemma \ref{lem:perfect matching}, there is a perfect matching in $H_1(x,u)$. Suppose $N(x) \setminus N[u] \ne \emptyset$. Each vertex in $N(x) \setminus N[u]$ transfers its masses to at least two vertices in $N(u) \setminus N[x]=N^+(u)$ because of $c_x>c_u$. This implies each vertex in $N(x) \setminus N[u]$ is adjacent to at least two vertices in $N^+(u)$. We observe that for any two different vertices $u'_1,u'_2 \in N(x) \setminus N[u]$, the two sets $N(u'_1) \cap N^+(u)$ and $N(u'_2) \cap N^+(u)$ must be disjoint. If not, then there exists a $v \in N^+(u)$ such that $|(N(x) \cap N(v)) \setminus N[u]| \geq 2$. This is impossible by item 2. Therefore, we have
\[2(t-1)=2|N(x) \setminus N[u]| \leq \sum_{u' \in N(x) \setminus N[u]} |N(u') \cap N^+(u)| \leq |N^+(u)|=r+2.\]
This gives us the upper bound of $t$. The case of $N(x) \setminus N[u]=\emptyset$ gives us the lower bound.
\end{proof}

\subsection{Construction of irregular diameter 3 Bonnet-Myers sharp graphs}
Theorem \ref{thm:structural thm} states that all irregular diameter 3 Bonnet-Myers sharp graphs have certain structures. In this section, we will show such graphs $G$ exist for $t=1,2$ and any $r\geq 1$. Let $x$ and $y$ be a pair of poles of $G$.  
We label the neighbors of $x$ by $\{u_0,u_1,.....,u_{2r+t-1}\}$, and the neighbors of $y$ by $\{v_0,v_1,.....,v_{2r+t-1}\}$.
Here we assume that indexes are in $\mathbb{Z}_{2r+t}$, i.e., $u_{2r+t+i}=u_i$ and $v_{2r+t+j}=v_j$.
We further assume $u_iv_j\in \E(G)$ if $i\leq j\leq i+r+1$. Let $G[N(x)]$ (and $G[N(y)]$) be a $2r$-regular graph as follows:

\begin{description}
    \item[$t=1$]: We set $G[N(x)]$ (and $G[N(y)]$) being the complete graph $K_{2r+1}$.
    \item[$t=2$]: We set $G[N(x)]$ (and $G[N(y)]$) being the complete graph $K_{2r+2}$ with a perfect matching $M_1$ (and $M_2$) removed, respectively.
    Here the matching $M_1$ in $N(x)$ is given by $u_iu_{i+r+1}$ for $i=0,1,2,\ldots, r$ while  the matching $M_2$ in $N(y)$ is given by $v_iv_{i+r+1}$ for $i=0,1,2,\ldots, r$. 
\end{description}

\begin{proposition}
    The graphs $G$ constructed above for $t=1,2$, and any $r\geq 1$ are diameter 3 Bonnet-Myers sharp graphs.
\end{proposition}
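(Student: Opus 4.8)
The plan is to verify directly that the constructed graph $G$ satisfies $\mathrm{diam}(G)=3$ and that $\kappa_{\min}(G)=\frac{2}{3}$, which together give Bonnet-Myers sharpness. First I would confirm the distance structure: by construction $x$ is adjacent to all $u_i$, $y$ is adjacent to all $v_j$, the edge $u_iv_j$ exists exactly when $i\le j\le i+r+1$ (cyclically), and $G[N(x)]$, $G[N(y)]$ are the prescribed regular graphs. One checks that every $u_i$ has a neighbor among the $v_j$ (so $d(x,y)\le 3$ via $x\,u_i\,v_j\,y$) and that $d(x,y)\ge 3$ since $N(x)$ and $N(y)$ are disjoint with no edge from $x$ into $N(y)$; hence $d(x,y)=3$ and, after checking the few remaining vertex pairs, $\mathrm{diam}(G)=3$.

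Next I would compute the curvature on all edge types and show each is at least $\frac{2}{3}$, with equality on at least one edge (namely any $xu_i$). The edges fall into a small number of classes by symmetry of the cyclic construction: (i) the pole edges $xu_i$ and $yv_j$; (ii) the ``middle'' edges $u_iv_j$ between $N(x)$ and $N(y)$; and (iii) edges inside $N(x)$ (of the form $u_iu_k$) and inside $N(y)$. For each class I would apply Theorem~\ref{thm:localstructure}: identify the degrees $d_x=d_y=2r+t$ and $d_{u}=3(r+1)$, form the mass distributions $\mu$, and either produce an explicit optimal integer-valued coupling $\sigma$ or, more efficiently, invoke Corollary~\ref{cor:k upper bound} together with the Hall-type condition of Corollary~\ref{cor:Hall}. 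Concretely, for a pole edge $xu_i$ one has $|N(x)\cap N(u_i)|=d^0_{u_i}=2r$, so the upper bound $\frac{|N(x)\cap N(u_i)|+2}{d_{u_i}}=\frac{2r+2}{3(r+1)}=\frac{2}{3}$; it then suffices to exhibit a perfect matching in $H_1(x,u_i)$, i.e.\ verify the Hall condition~\eqref{eq:hallcondition}, which is where the explicit cyclic adjacency pattern $i\le j\le i+r+1$ is used. The same bound with equality $\frac{2}{3}$ should hold for the middle edges once the relevant intersection sizes are computed, and for the internal edges I would show $\kappa\ge\frac{2}{3}$ (they need not be tight).

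The main obstacle is verifying the Hall condition of Corollary~\ref{cor:Hall} for the tight edges, since this is exactly the step that uses the combinatorial design of the construction and must be done separately (though analogously) for the $t=1$ and $t=2$ cases. For $t=1$ the blow-up factors satisfy $c_x=c_u$ is not the case---one has $d_x=2r+1<3(r+1)=d_u$---so the coupling genuinely spreads mass, and one must check that the cyclically-structured bipartite graph between $N(x)$ and $N^+(u_i)$ admits the required matching; the removed matchings $M_1,M_2$ in the $t=2$ case slightly perturb the intersection counts and degrees inside $N(x)$, so the Hall verification there is the most delicate. I expect a clean way to handle this uniformly is to exploit the cyclic $\mathbb{Z}_{2r+t}$ symmetry: the automorphism $u_i\mapsto u_{i+1}$, $v_j\mapsto v_{j+1}$ acts transitively on each edge class, so it suffices to verify the matching condition for a single representative edge in each class and to check that the neighborhoods $N(u_i)\cap N^+(u_j)$ have the sizes forced by Theorem~\ref{thm:structural thm}. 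Once every edge is shown to have curvature $\ge\frac{2}{3}$ with equality attained, Lemma~\ref{lem:diam} gives $\mathrm{diam}(G)\le\frac{2}{\kappa_{\min}}=3$, matching the computed diameter, so $G$ is Bonnet-Myers sharp.
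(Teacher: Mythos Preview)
Your overall plan (verify $\mathrm{diam}=3$, break into three edge classes by the cyclic symmetry, and show $\kappa\ge\frac{2}{3}$ on each via Theorem~\ref{thm:localstructure}) matches the paper, and your treatment of the pole edges $xu_i$ via the upper bound $\frac{|N(x)\cap N(u_i)|+2}{d_{u_i}}=\frac{2r+2}{3(r+1)}=\frac{2}{3}$ together with a perfect matching in $H_1(x,u_i)$ is correct; the paper does the same thing but phrases the matching construction as a greedy mass-transfer algorithm rather than a Hall verification.

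The gap is in your handling of the middle edges $u_iv_j$. Your expectation that ``the same bound with equality $\frac{2}{3}$ should hold'' is wrong: for $t=1$ one computes $|N(u_0)\cap N(v_j)|=2r+2$, so the Corollary~\ref{cor:k upper bound} bound is $\frac{2r+4}{3(r+1)}>\frac{2}{3}$, and the Hall approach cannot give you the lower bound you need. Worse, there is \emph{no} perfect matching in $H_1(u_0,v_j)$ in the generic case, because $x\in N(u_0)\setminus N[v_j]$ while $N(v_j)\setminus N[u_0]\subseteq\{y\}\cup N(y)$ contains no neighbor of $x$. The paper's proof instead constructs an explicit coupling that sends the one unit of mass at $x$ all the way to $y$ at cost $3$ (or, in the $t=2$ boundary cases $j=0,r+1$, sends $x\to u_{r+1}$ and $v_{*}\to y$ at cost $1$ each), and moves the remaining $r-1$ units along edges, for total cost exactly $r+2$; plugging into Theorem~\ref{thm:localstructure} then yields $\kappa(u_0,v_j)\ge\frac{2}{3}$. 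Recognizing that a non-distance-$1$ move is forced (and identifying it as the $x\to y$ transfer) is the missing idea in your plan for this edge class. The internal edges $u_iu_k$ do admit perfect matchings in $H_1$ as you anticipate, so your approach is fine there.
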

\begin{proof}
    The graph $G$ clearly has diameter $3$. It suffices to show $\kappa(x,y)\geq \frac{2}{3}$ for any edge $xy$ in $G$. (This implies $\kappa_{min}\geq \frac{2}{3}$ and $diam(G)\leq \frac{2}{\kappa_{min}}\leq 3$. Since $diam(G)=3$, we must have
$\kappa_{min}=\frac{2}{3}.$)

Graph $G$ has three type of edges.
\begin{enumerate}
    \item $xu_i$ and $yv_j$, for $i,j\in \mathbb{Z}_{2r+t}$.
    \item $u_iv_j$, for $i\leq j\leq i+r+1$.
    \item $u_{i_1}u_{i_2}$ and $v_{j_1}v_{j_2}$.
\end{enumerate}

For the first type of edges, by the symmetry of $G$, it suffices to show
$\kappa(x,u_0)\geq \frac{2}{3}$.
We have $d_x=2r+t$ and $d_{u_0}=3r+3$. Since $d_{u_0}<2d_x$, we have
$c_x<2c_{u_0}$ and 
\begin{align*}
\mu_x(u)&= \begin{cases}
    c_x & \mbox{ if } t=2 \mbox{ and } u=u_{r+1};\\
    c_x-c_{u_0}& u\in N(x) \mbox{ but not the first case};\\ 
    0& \mbox{ otherwise.}
\end{cases}\\
\mu_{u_0}(v)&= \begin{cases}
      c_{u_0} &\mbox{ if $v=v_j$ for } 0\leq j \leq r+1;\\
      0 & \mbox{ otherwise.}
\end{cases}
\end{align*}

We use the following greedy algorithm to assign the coupling $\sigma$.
Assume $u_i$ has $\mu_x(u_i)$ units of masses at the beginning of the algorithm while for each
$0\leq j\leq r+1$, $v_j$ has an empty bin of capacity $c_{u_0}$.

\begin{table}[ht]
\centering
\begin{minipage}{0.7\textwidth}
    \begin{tabbing}
    MMMM\=MMMM\=MMMM\=\kill
    {\bf Algorithm:}\\
    \+\>$i \leftarrow -r$\\
    $j \leftarrow 0$ \\
    while (not complete)\{\\
    \>Transfer as many masses from $u_i$ to $v_j$ as possible\\
    \>if $u_i$ is empty, $i\leftarrow i+1$\\
    \>if $v_j$ is full, $j\leftarrow j+1$\\
    \}
    \end{tabbing}
   \end{minipage}
\caption{A greedy algorithm to transfer masses from $\mu_x$ to $\mu_{u_0}$.}
\end{table}

We claim that during the process, we always have $0\leq j-i\leq r+1$.

The claim is true at the beginning. Since $c_x-c_{u_0}\leq c_{u_0}$, $i$ increases faster that $j$.  
Thus $j-i$ decreases during the process. When the algorithm stops, we have $i=r+t-1$ and $j=r+1$. Thus $j-i=2-t\geq 0$. The proof of the claim is finished. Therefore, $u_iv_j$ is always an edge of $G$. 

Therefore, we show there is a coupling $\sigma$ with cost $(r+2)c_{u_0}$.
By Theorem \ref{thm:localstructure}, we have

\begin{align*}
    \kappa(x,u_i)&\geq 1+\frac{1}{d_{u_0}}-\frac{C(\sigma)}{\lcm(d_x, d_{u_0})}.\\
    &= 1+\frac{1}{d_{u_0}}-\frac{c_{u_0}(r+2)}{\lcm(d_x, d_{u_0})}\\
    &= 1+\frac{1}{3(r+1)}-\frac{r+2}{3(r+1)}\\
     &= \frac{2}{3}.
\end{align*}

Now consider second kind of edges. By symmetry, it suffices to show $\kappa(u_0,v_j)\geq \frac{2}{3}$ for $0\leq j \leq r+1$.
Since $d_{u_0}=d_{v_j}=3r+3$, we have $c_{u_0}=c_{v_j}=1$. Note that for any $u \in N(u_0)\setminus N[v_j]$, have $\mu_{u_0}(u)=1$. For any $v \in N(v_j)\setminus N[u_0]$, we have $\mu_{v_j}(v)=1$. 

For the case of $t=1$, we have
\begin{align*}
    X(u_0,v_j)&=N(u_0)\setminus N[v_j]=\{x\}\cup \{u_{j+1},u_{j+2},\ldots, u_{j+r-1} \},\\
    Y(u_0,v_j)&=N(v_j)\setminus N[u_0]=\{y\}\cup \{v_{r+2},v_{r+3},\ldots, v_{2r} \}.
\end{align*}
Transfer $1$ unit of mass from $u_{j+k}$ to 
$v_{r+1+k}$ for $k=1,2,\ldots,r-1$. Since \[0\leq (r+1+k)-(j+k)\leq r+1,\] each move has cost $1$. Finally, transfer $1$ unit of mass from $x$ to $y$ with cost 3.

The total cost is \[C(\sigma)=(r-1)+3=r+2.\]

For the case of $t=2$, we have the following three cases: 
\begin{enumerate}
\item For $1 \leq j \leq r$, we have
\begin{align*}
    X(u_0,v_j)&=N(u_0)\setminus N[v_j]=\{x\}\cup \{u_{j+1},u_{j+2},\ldots, u_{j+r} \} \setminus \{u_{r+1}\},\\
    Y(u_0,v_j)&=N(v_j)\setminus N[u_0]=\{y\}\cup \{v_{r+2},v_{r+3},\ldots, v_{2r+1} \} \setminus \{v_{j+r+1}\}.
\end{align*}

Transfer $1$ unit of mass from the vertex in set $X(u_0,v_j)\setminus \{x\}$ with the lowest non-empty index to the vertex in set $Y(u_0,v_j)\setminus \{y\}$ with the lowest non-full index. Continue this process until all vertices in $X(u_0,v_j)\setminus \{x\}$ are empty and all vertices in $Y(u_0,v_j)\setminus \{y\}$ are full. Observe that the difference of indexes during this process is 
$r+1-j+\varepsilon$ where $\varepsilon \in \{-1,0,1\}$ depending
on the relative positions of the exclusive
vertices $u_{r+1}$ and $v_{j+r+1}$ in the sequence. Since
\[ 0\leq r+1-j+\varepsilon\leq r+1,\]
each move has cost 1. By the end, a total of $r-1$ units of masses will have been moved from $X(u_0,v_j)\setminus \{x\}$ to $Y(u_0,v_j)\setminus \{y\}$. Finally, transfer $1$ unit of mass from $x$ to $y$ with cost $3$.
The total cost is \[C(\sigma)=r-1+3=r+2.\]

\item
For $j=0$, we have
\begin{align*}
    X(u_0,v_0)&=N(u_0)\setminus N[v_0]=\{x\}\cup \{u_{1},u_{2},\ldots, u_{r} \} \cup \{v_{r+1}\},\\
    Y(u_0,v_0)&=N(v_0)\setminus N[u_0]=\{y\}\cup \{v_{r+2},v_{r+3},\ldots, v_{2r+1}\} \cup \{u_{r+1}\}.
\end{align*}
Transfer $1$ unit of mass from 
$u_{k}$ to $v_{r+1+k}$ for $k=1,2,\ldots, r$. Transfer $1$ unit of mass from $x$ to $u_{r+1}$. Transfer $1$ unit of mass from $v_{r+1}$ to $y$.
The total cost is
\[C(\sigma)=r+1+1=r+2.\]

\item For $j=r+1$, we have
\begin{align*}
    X(u_0,v_{r+1})&=N(u_0)\setminus N[v_{r+1}]=\{x\}\cup \{u_{r+2},u_{r+3},\ldots, u_{2r+1} \} \cup \{v_0\},\\
    Y(u_0,v_{r+1})&=N(v_{r+1})\setminus N[u_0]=\{y\}\cup \{v_{r+2},v_{r+3},\ldots, v_{2r+1}\} \cup \{u_{r+1}\}.
\end{align*}

Transfer $1$ unit of mass from 
$u_{r+1+k}$ to $v_{r+1+k}$ for $k=1,2,\ldots, r$. Transfer $1$ unit of mass from $x$ to $u_{r+1}$. Transfer $1$ unit of mass from $v_0$ to $y$.
The total cost is 
\[C(\sigma)=r+1+1=r+2.\]
\end{enumerate}

By Theorem \ref{thm:localstructure}, we have
\begin{align*}
\kappa(u_0,v_j)&\geq 1+\frac{1}{d_{v_j}}-\frac{C(\sigma)}{\lcm(d_{u_0}, d_{v_j})}\\
&=1+\frac{1}{3(r+1)}-\frac{r+2}{3(r+1)}\\
&=\frac{2}{3}.
\end{align*}  

Now consider third kind of edges. By symmetry, it suffices to show $\kappa(u_0,u_i)\geq \frac{2}{3}$.
We can further assume $i\leq \frac{2r+t}{2}-1$. (If $i>\frac{2r+t}{2}-1$, we can rotate the pair of vertices by
$2r+t-i$ so that $u_0u_i$ is a shorter arc on the $(2r+t)$-cycle.)

Since $d_{u_0}=d_{u_i}=3r+3$, we have $c_{u_0}=c_{u_i}=1$. Note that for any $u \in N(u_0)\setminus N[u_i]$, have $\mu_{u_0}(u)=1$. For any $v \in N(u_i)\setminus N[u_0]$, we have $\mu_{u_i}(v)=1$. 

For the case of $t=1$, we have
\begin{align*}
    X(u_0,u_i)&=N(u_0)\setminus N[u_i]=\{v_0,v_1,\ldots, v_{i-1} \},\\
    Y(u_0,u_i)&=N(u_i)\setminus N[u_0]=\{v_{r+2},v_{r+3},\ldots, v_{i+r+1} \}.
\end{align*}
For $j=0,1,\ldots, i-1$, we transfer
$1$ unit of mass for $v_j$ to $v_{r+2+j}$. Since $G[N(y)]$ is a complete graph, each move has cost $1$. Since $i\leq \frac{2r+1}{2}-1=r-\frac{1}{2}$ and $i$ is an integer,
we have $i\leq r-1$. We have
\[C(\sigma)\leq i\leq r-1<r+2.\]

For the case of $t=2$, we have
\begin{align*}
    X(u_0,u_i)&=N(u_0)\setminus N[u_i]=\{u_{i+r+1}, v_0,v_1,\ldots, v_{i-1} \},\\
    Y(u_0,u_i)&=N(u_i)\setminus N[u_0]=\{u_{r+1}, v_{r+2},v_{r+3},\ldots, v_{i+r+1} \}.
\end{align*}
For $j=0,1,\ldots, i-1$, we transfer
$1$ unit of mass from $v_j$ to $v_{r+2+j}$.
Since $r+2+j-j=r+2$,
$v_jv_{r+2+j}$ is an edge.
Each move has cost $1$.

Finally, we transfer $1$ unit of mass
from $u_{i+r+1}$ to $u_{r+1}$. Since $i+r+1-(r+1)=i$ and $i\leq \frac{2r+2}{2}-1=r$, $u_{i+r+1}u_{r+1}$ is an edge.
The cost of this move is $1$.
We have
\[C(\sigma)\leq i+1\leq r+1<r+2.\]

In either case, we have
\begin{align*}
\kappa(u_0,u_i)&\geq 1+\frac{1}{d_{u_0}}-\frac{C(\sigma)}{\lcm(d_{u_0}, d_{u_i})}\\
&> 1+\frac{1}{3(r+1)}-\frac{r+2}{3(r+1)}\\
&=\frac{2}{3}.
\end{align*}  
The proof of this proposition is finished.
\end{proof}

\section{$C_3$-free Bonnet-Myers sharp graphs}
In this section, we assume $G$ is a $C_3$-free Bonnet-Myers sharp graph with $diam(G)=L$, and $x$ is a pole of $G$.

\begin{lemma} \label{lem:C3free}
We have
\begin{enumerate}
    \item For any vertex $u\in V(G)$, we have $d_u \leq L$. If $d_u=L$, then for any $v\in N^+(u)\cup N^-(u)$, there is a perfect matching in $H_1(v,u)$.
    \item For any $u\in N(x)$, $d_u=L$.
    \item For any $u\in N_i(x)$, we have $d^-_u\leq i$. Moreover, if $d^-_u=i$, then 
    $d^0_u=0$ and $d^+_u=L-i$.
\end{enumerate}
\end{lemma}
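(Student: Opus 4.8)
The plan is to handle the three parts sequentially, in each case converting the $\mathcal{C}_3$-free hypothesis into a statement about common neighborhoods and then feeding it into the degree identities already available for Bonnet-Myers sharp graphs (Lemma \ref{lem:xi}) together with the edge degree bound (Lemma \ref{lem:d upper bound}).

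For part (1), I would first observe that $\mathcal{C}_3$-freeness means every edge $uv$ has $N(u)\cap N(v)=\emptyset$, so $|N(u)\cap N(v)|=0$. Feeding this into Lemma \ref{lem:d upper bound} immediately gives $d_u\le \tfrac{0+2}{2}L=L$ for every vertex $u$. For the matching statement, suppose $d_u=L$ and pick any $v\in N^+(u)\cup N^-(u)$; the bound just proved gives $d_v\le L=d_u$, so the equality case of Lemma \ref{lem:d upper bound} applies (we have $d_u=\tfrac{|N(u)\cap N(v)|+2}{2}L$ with $d_v\le d_u$), yielding a perfect matching in $H_1(v,u)$.

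For part (2), I would fix $u\in N(x)=N_1(x)$. The $\mathcal{C}_3$-free condition forces $d^0_u=0$, since a neighbor of $u$ lying inside $N(x)$ would complete a triangle with $x$; and it forces $d^-_u=1$, since the only vertex at distance $0$ from $x$ is $x$ itself, which is adjacent to $u$. Hence $d_u=d^+_u+1$. Substituting $d^+_u=d_u-1$ and $d^-_u=1$ into the identity $d^+_u-d^-_u=(1-\tfrac{2}{L})d_u$ of Lemma \ref{lem:xi} gives $d_u-2=(1-\tfrac{2}{L})d_u$, i.e. $\tfrac{2}{L}d_u=2$, so $d_u=L$. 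For part (3), let $u\in N_i(x)$. Eliminating $d^+_u$ between $d_u=d^+_u+d^0_u+d^-_u$ and the Lemma \ref{lem:xi} identity $d^+_u-d^-_u=(1-\tfrac{2i}{L})d_u$ produces the single clean relation $d^0_u+2d^-_u=\tfrac{2i}{L}d_u$. Part (1) gives $d_u\le L$, so the right-hand side is at most $2i$, whence $2d^-_u\le d^0_u+2d^-_u\le 2i$ and $d^-_u\le i$. If moreover $d^-_u=i$ with $i\ge 1$, then $\tfrac{2i}{L}d_u=d^0_u+2i\ge 2i$ forces $d_u\ge L$; combined with $d_u\le L$ this gives $d_u=L$, and then $d^0_u=0$ and $d^+_u=d_u-d^0_u-d^-_u=L-i$.

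These deductions are short once the reductions are in place, so the real content is conceptual rather than computational: recognizing that $\mathcal{C}_3$-freeness annihilates $N(u)\cap N(v)$ (removing all the common-neighbor slack in Lemma \ref{lem:d upper bound}) and pins down $d^0_u=0$, $d^-_u=1$ for $u\in N(x)$. The one place that needs care is the equality analysis in part (3): I must use the bound $d_u\le L$ from part (1) to split the single equation $d^0_u+2d^-_u=\tfrac{2i}{L}d_u$ into the three separate conclusions $d_u=L$, $d^0_u=0$, and $d^+_u=L-i$. This final step relies on $i\ge 1$; the case $i=0$ is the pole $x$ itself and is degenerate.
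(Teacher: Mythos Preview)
Your proof is correct and follows essentially the same approach as the paper's: both arguments exploit $\mathcal{C}_3$-freeness to set $|N(u)\cap N(v)|=0$ and then feed the resulting identities into Lemma~\ref{lem:d upper bound} and Lemma~\ref{lem:xi}. The only cosmetic difference is that in parts (2) and (3) the paper routes the computation through Lemma~\ref{lem:biratio} (the ratio $d^+_u/d^-_u=L/i-1$), whereas you work directly with the identity $d^+_u-d^-_u=(1-\tfrac{2i}{L})d_u$; your equality analysis in part (3), including the explicit note that $i\ge 1$ is needed to conclude $d_u=L$, is in fact slightly more careful than the paper's.
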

\begin{proof}
We first prove item 1.
Since $G$ is triangle-free, we have $|N(u)\cap N(v)|=0$ for any edge $uv$.
Applying Lemma \ref{lem:d upper bound}, we have
\[d_u \leq \left(\frac{|N(u) \cap N(v)|+2}{2}\right)L =L.\]
The equality holds if there is a perfect matching in $H_1(v,u)$.
 
Now we prove item 2.
For $u \in N(x)$, we have $d^-_u=1$. Since $G$ is triangle-free, we must have $d^0_u=0$. Applying Lemma \ref{lem:biratio},
we get $d^+_u=L-1$.
Thus $d_u=L$. 

Finally, we prove item 3.
By Lemma \ref{lem:xi}, we have
\[d^+_u-d^-_u=\left(1-\frac{2i}{L}\right)d_{u}.\]
Since $d_u=d^+_u+ d^0_u+ d^-_u$, we have
\[2d^-_u + d^0_u= d_u - (d^+_u-d^-_u) = \frac{2i}{L}d_{u} \leq 2i.\]
Therefore, we have $d^-_u \leq i$. If $d^-_u = i$, then $d^0_u=0$. By Lemma \ref{lem:biratio}, we have $d^+_u=L-i$ and $d_{u}=L$.
\end{proof}

\begin{lemma}\label{lem:N2bijective}
We have
\begin{enumerate}
    \item If $d_x >\frac{1}{2}L$, then for any $u \in N_2(x)$, we have $d^-_u=2$.
    \item If $d_x >\frac{2}{3}L$, then for any two vertices $u_1, u_2 \in N_2(x)$,
    we have $N(u_1)\cap N(x) \not =N(u_2)\cap N(x)$.
\end{enumerate}

\end{lemma}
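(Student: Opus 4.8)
The plan is to extract Hall-type inequalities from the perfect matchings in the blow-up graphs $H_1(x,w)$ that Lemma~\ref{lem:perfect matching} guarantees for every $w\in N(x)$. Throughout I keep the pole $x$ fixed and take $f=d(x,\bullet)$. For $w\in N(x)$ we have $d_w=L$ by Lemma~\ref{lem:C3free}(2), and since $G$ is triangle-free the relevant set intersections collapse: $N(x)\cap N[w]=\{w\}$ and $N(w)\setminus N[x]=N^+(w)\subseteq N_2(x)$. Writing $\lcm(d_x,L)=c_xd_x=c_wL$ (so $c_x/c_w=L/d_x$ and $c_x\ge c_w$ because $d_x\le L$ by Lemma~\ref{lem:C3free}(1)), the source side of $H_1(x,w)$ is the blow-up of $N(x)$ with multiplicities $\mu_x(w)=c_x-c_w$ and $\mu_x(p)=c_x$ for every $p\in N(x)\setminus\{w\}$, while the target side is $N^+(w)$ blown up with multiplicity $c_w$. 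The local fact I would repeatedly use is that a target $u\in N^+(w)$ is joined in $H_1(x,w)$ only to copies of the vertices in $N^-(u)=N(u)\cap N(x)$, since those are exactly its distance-$1$ sources inside $N(x)$.

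For part (1), recall $1\le d^-_u\le 2$ for $u\in N_2(x)$ by Lemma~\ref{lem:C3free}(3), so it suffices to rule out $d^-_u=1$. I would assume $d^-_u=1$, let $w$ be the unique vertex of $N^-(u)$, and observe $u\in N^+(w)$. In the perfect matching of $H_1(x,w)$ the $c_w$ copies of $u$ can only be matched to copies of $w$, which forces $\mu_x(w)\ge c_w$, i.e. $c_x-c_w\ge c_w$. This gives $c_x/c_w=L/d_x\ge 2$, hence $d_x\le L/2$, contradicting the hypothesis $d_x>\tfrac12 L$. Therefore $d^-_u=2$.

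For part (2), since $d_x>\tfrac23 L>\tfrac12 L$, part (1) applies and each $N(u)\cap N(x)=N^-(u)$ is a $2$-element subset of $N(x)$. I would argue by contradiction: suppose two distinct $u_1,u_2\in N_2(x)$ share the same lower-neighborhood $N^-(u_1)=N^-(u_2)=\{w_1,w_2\}$, and form $H_1(x,w_1)$. Both $u_1,u_2$ lie in $N^+(w_1)$, and each is adjacent in $H_1(x,w_1)$ only to copies of $w_1$ and $w_2$. Applying Hall's condition (available because a perfect matching exists) to the target set $S$ consisting of all $c_w$ copies of $u_1$ together with all $c_w$ copies of $u_2$, we have $|S|=2c_w$, while its neighborhood is the blow-up of $\{w_1,w_2\}$, of size $(c_x-c_w)+c_x=2c_x-c_w$. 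Thus $2c_x-c_w\ge 2c_w$, so $c_x/c_w=L/d_x\ge \tfrac32$ and $d_x\le\tfrac23 L$, a contradiction. Hence $N(u_1)\cap N(x)\ne N(u_2)\cap N(x)$.

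The main obstacle is the blow-up bookkeeping rather than any deep idea: one must keep track of which source vertex carries the reduced mass $c_x-c_w$ and verify, using triangle-freeness, that the $N[\cdot]$-intersections are trivial so that each $N_2(x)$-target admits precisely its lower-neighbors as sources. The one subtlety worth flagging is the choice of the center of the blow-up graph: in part (2) I deliberately take the center $w_1$ to be one of the two shared lower-neighbors, so that the mass defect $c_x-c_w$ lands on a source that genuinely participates in the Hall count; choosing an unrelated center would weaken the resulting inequality. Once these choices are made, both parts collapse to a single Hall inequality.
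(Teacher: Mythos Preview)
Your proof is correct and follows essentially the same approach as the paper: both arguments pick a neighbor $w\in N(x)$ of the target vertex (or vertices), invoke the perfect matching in $H_1(x,w)$ from Lemma~\ref{lem:perfect matching}, and then derive the mass/Hall inequality $c_x-c_w\ge c_w$ (respectively $(c_x-c_w)+c_x\ge 2c_w$) to contradict the hypothesis on $d_x$. The only cosmetic difference is that you phrase the key step explicitly as Hall's condition on the target side, whereas the paper phrases it as ``masses insufficient to fill up'' and cites Corollary~\ref{cor:matchingdirection} for the left-to-right direction (which, as you implicitly note, is automatic here since the targets lie in $N_2(x)$ and their only distance-$1$ sources in $N(x)$ are their lower-neighbors).
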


\begin{proof}
For item 1, let $u \in N_2(x)$, we have $d^-_u \leq 2$ from Lemma \ref{lem:C3free}. It is sufficient to show $d^-_u \geq 2$. We will prove it by contradiction. Otherwise we have $d^-_u=1$. Let $v$ be the only vertex in $N(u)\cap N(x)$. Consider the curvature of the edge $xv$.
Let $\lcm(d_x,d_v)=c_xd_x=c_vd_v$. Since $d_x \in \left(\frac{1}{2}L, L\right]$ and $d_v=L$, we have $c_{v} \leq c_x < 2c_{v}$. By Lemma \ref{lem:perfect matching}, there exists a perfect matching in $H_1(x,v)$, i.e., all non-zero entries of the optimal coupling $\sigma$ are equal to 1. However, the masses from $v$ are not sufficient to fill up
the vertex $u$ because of $c_x-c_v<c_v$ (see Figure \ref{fig:lemma18} (i)). Contradiction!

\begin{figure}[h]
   \begin{center}
        \begin{minipage}{.35\textwidth}
        		\resizebox{4.5cm}{!}{\begin{tikzpicture}[scale=1, Wvertex/.style={circle, draw=black, fill=white, scale=3}, bvertex/.style={circle, draw=black, fill=black, scale=0.3},rvertex/.style={circle, draw=red, fill=red, scale=0.2}]

{
\node[bvertex, label={[font=\small] below:$x$}] (v1) at (-0.5,0) {};
\node[bvertex, label={[font=\small] below:$v$}, label={[font=\small] above:\textcolor{red}{$c_x-c_{v}$}}] (v2) at (1,-1) {};
\node[bvertex, label={[font=\small] below:$u$}, label={[font=\small] above:{$c_{v}$}}] (v4) at (3,-1) {};
\node [label={[font=\small] above:$......$}] (uk'') at (4.5,-0.3) {};

\draw(1,0) ellipse (0.8 and 2);
\draw (3,0) ellipse (0.8 and 2);
\draw (v1)--(v2);
\draw (v2)--(v4);

\node[scale=0.8] (l1) at (1,2.3) {{$\bf N(x)$}};
\node[scale=0.8] (l1) at (3,2.3) {{$\bf N_2(x)$}};

}
\end{tikzpicture}}
        \end{minipage}
            \hfil
        \begin{minipage}{.35\textwidth}
        		\resizebox{4.5cm}{!}{\begin{tikzpicture}[scale=1, Wvertex/.style={circle, draw=black, fill=white, scale=3}, bvertex/.style={circle, draw=black, fill=black, scale=0.3},rvertex/.style={circle, draw=red, fill=red, scale=0.2}]

{
\node[bvertex, label={[font=\small] below:$x$}] (v1) at (-0.5,0) {};
\node[bvertex, label={[font=\small] below:$v_1$}, label={[font=\small] above:\textcolor{red}{$c_x-c_{v_1}$}}] (v2) at (1,-1) {};
\node[bvertex, label={[font=\small] below:$v_2$}, label={[font=\small] above:\textcolor{red}{$c_x$}}] (v3) at (1,1) {};
\node[bvertex, label={[font=\small] below:$u_1$}, label={[font=\small] above:{$c_{v_1}$}}] (v4) at (3,-1) {};
\node[bvertex, label={[font=\small] below:$u_2$}, label={[font=\small] above:{$c_{v_1}$}}] (v5) at (3,1) {};
\node [label={[font=\small] above:$......$}] (uk'') at (4.5,-0.3) {};

\draw(1,0) ellipse (0.8 and 2);
\draw (3,0) ellipse (0.8 and 2);
\draw (v1)--(v2);
\draw (v1)--(v3);
\draw (v2)--(v4);
\draw (v2)--(v5);
\draw (v3)--(v4);
\draw (v3)--(v5);

\node[scale=0.8] (l1) at (1,2.3) {{$\bf N(x)$}};
\node[scale=0.8] (l1) at (3,2.3) {{$\bf N_2(x)$}};

}
\end{tikzpicture}}
        \end{minipage}
    \end{center}
    \caption{Two impossible cases: (i) $N(u)\cap N(x)=\{v\}$.
   (ii) $N(u_1)\cap N(x) =N(u_2)\cap N(x)=\{v_1,v_2\}$.}
    \label{fig:lemma18}
\end{figure}
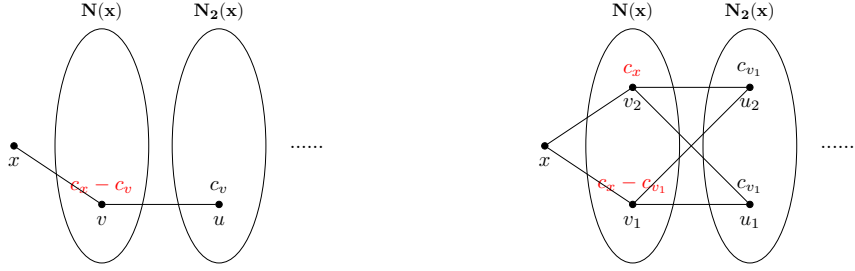

Now we prove item 2 by contradiction.  Suppose that there exist two vertices 
$u_1, u_2 \in N_2(x)$, such that $N(u_1)\cap N(x) =N(u_2)\cap N(x)=\{v_1,v_2\}$ (see Figure \ref{fig:lemma18} (ii)).

Consider the curvature of the edge $xv_1$.
Let $\lcm(d_x,d_{v_1})=c_xd_x=c_{v_1}d_{v_1}$. Since $d_x \in \left(\frac{2}{3}L,L\right]$ and $d_{v_1}=L$, we have $c_{v_1} \leq c_x < \frac{3}{2}c_{v_1}$. By Lemma \ref{lem:perfect matching}, there is a perfect matching in $H_1(x,v_1)$. The matching edges are from left to right (see Corollary \ref{cor:matchingdirection}).
Since the masses from $v_1, v_2$ need to be sufficient to fill up $u_1,u_2$, we have
\[c_x-c_{v_1}+c_x= |S_{v_1}\cup S_{v_2}| \geq |S_{u_1}\cup S_{u_2}| 
=2c_{v_1}.\]
It implies $c_x\geq \frac{3}{2}c_{v_1}$. Contradiction!
\end{proof}

\begin{proof}[Proof of Theorem \ref{thm:hypercube}]
We first show $d_x=L$. By Lemma \ref{lem:N2bijective}, all vertices in $N_2(x)$
have exactly two neighbors in $N(x)$, and these pairs of neighbors are distinct.
Thus, we have
\begin{equation} \label{eq:n2up}
   |N_2(x)|\leq \binom{d_x}{2}. 
\end{equation}

Now consider the number of edges between $N(x)$ and $N_2(x)$. On one hand,
each vertex in $N(x)$ has $L-1$ neighbors in $N_2(x)$. On the other hand, each
vertex in $N_2(x)$ has $2$ neighbors in $N(x)$. We have
\begin{equation}\label{eq:n2}
    d_x(L-1)=2|N_2(x)|.
\end{equation}

Combining Equations \eqref{eq:n2up} and \eqref{eq:n2}, we get
  \[ d_x(L-1) \leq 2 \binom{d_x}{2}. \]
  It implies $L\leq d_x$. By Lemma \ref{lem:C3free}, $d_x\leq L$. Therefore, we must have
  $d_x=L$.

We will establish an isomorphism $g$ between $G$ and $Q_L$. Label the vertices of $Q_L$ by the subsets of $[L]$. First, we define $g(x)=\emptyset$. Denote the set of neighbors of $x$ by $u_1,u_2,\dots,u_L$ and define $g(u_k)=\{k\}$ for $1 \leq k \leq L$. 

By Lemma \ref{lem:N2bijective}, every vertex in $N_2(x)$ has a distinct pair of neighbors in $N(x)$. For $v \in N_2(x)$, we define $g(v)=g(u_1) \cup g(u_2)$ whenever $N(v) \cap N(x)=\{u_1,u_2\}$. By Equation \ref{eq:n2}, we have $|N_2(x)|=\binom{L}{2}$ so every distinct pair of vertices in $N(x)$ have a common neighbor in $N_2(x)$. Thus, $g$ is an isomorphism between $G\left[\cup_{k=0}^{2} N_k(x)\right]$ and $Q_L\left[\cup_{k=0}^{2} \binom{[L]}{k}\right]$.

Now we will continue to define the map $g$ from $N_k(x)$ to $\binom{[L]}{k}$ for $3 \leq k \leq L$ by induction on $k$. Suppose that the isomorphism $g$ from $G\left[\cup_{k=0}^{i-1} N_k(x)\right]$ to $Q_L\left[\cup_{k=0}^{i-1} \binom{[L]}{k}\right]$ has already defined. In particular, the induced bipartite graph on $(N_{i-2}(x), N_{i-1}(x))$ is $C_4$-free.
(Such a $C_4$ is called a {\em butterfly} graph. There is no butterfly graph between two consecutive levels in the hypercube.) Now we need to define the map $g$ from $N_i(x)$ to $\binom{[L]}{i}$. 

First, we will show that $d^-_v=i$ for any $v \in N_i(x)$.  By Lemma \ref{lem:C3free}, it suffices to show $d^-_v\geq i$. 
Let $w \in N^-(v) \subseteq N_{i-1}(x)$. Take any $z \in N^-(w) \subseteq N_{i-2}(x)$. Since the isomorphism $g$ from $G[\cup_{k=0}^{i-1} N_k(x)]$ to $Q_L[\cup_{k=0}^{i-1} \binom{[L]}{k}]$ has already defined, we must have $d^-_w=i-1$ and $d^-_z=i-2$. Then by Lemma \ref{lem:C3free}, we have $d_w=d_z=L$. Consider the curvature of the edge $zw$. Let $\lcm(d_z,d_w)=c_zd_z=c_wd_w$. We have $c_z=c_w=1$. Since $d_w=d_z=L$, we know there is a perfect matching in $H_1(z,w)$ by Lemma \ref{lem:C3free}. The matching edges are from left to right (see Corollary \ref{cor:matchingdirection}).
In particular, a vertex $w':=w'(z,w,v) \in N^+(z) \subseteq N_{i-1}(x)$ is matched to $v$ where $w' \ne w$, for each $z \in N^-(w)$.

Now we will show that different $z$ give different choices of $w'$. Suppose $z_1,z_2$ where $z_1 \ne z_2$ give the same choice of $w'$. Then we get a butterfly graph $z_1,z_2, w',w$
(see Figure \ref{fig:theorem7} (i)).
However, $Q_L$ does not contain the butterfly subgraph. Contradiction!
Since there are $i-1$ choices of $z$, we get $i-1$ choices of $w'$. These $i-1$ vertices together with $w$ are the neighbors of $v$ in $N_{i-1}(x)$. Therefore, $d^-_v\geq i$. 
Thus, $d^-_v=i$. This implies $d_v=L$ by Lemma \ref{lem:C3free}.

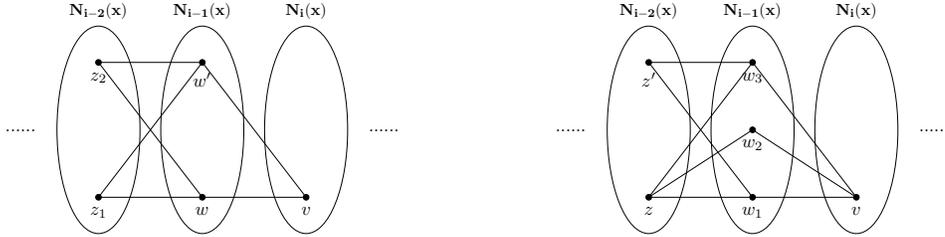
\begin{figure}[h!]
 \begin{center}
        \begin{minipage}{.40\textwidth}
        		\resizebox{5.5cm}{!}{\begin{tikzpicture}[scale=1, Wvertex/.style={circle, draw=black, fill=white, scale=0.3}, bvertex/.style={circle, draw=black, fill=black, scale=0.3},rvertex/.style={circle, draw=red, fill=red, scale=0.2}]

{
\node[bvertex, label={[font=\small] below:$z_1$}, label={[font=\small]}] (v1) at (-1,-1.3) {};
\node[bvertex, label={[font=\small] below:$z_2$}, label={[font=\small] }] (v2) at (-1,1.3) {};
\node[bvertex, label={[font=\small] below:$w$}] (v3) at (1,-1.3) {};
\node[bvertex, label={[font=\small] below:$w'$}] (v5) at (1,1.3) {};
\node[bvertex, label={[font=\small] below:$v$}] (v6) at (3,-1.3) {};
\node [label={[font=\small] above:$......$}] (uk'') at (4.5,-0.3) {};
\node [label={[font=\small] above:$......$}] (uk'') at (-2.5,-0.3) {};

\draw(-1,0) ellipse (0.8 and 2);
\draw(1,0) ellipse (0.8 and 2);
\draw (3,0) ellipse (0.8 and 2);
\draw (v1)--(v3);
\draw (v1)--(v5);
\draw (v2)--(v3);
\draw (v2)--(v5);
\draw (v6)--(v3);
\draw (v6)--(v5);

\node[scale=0.8] (l1) at (-1,2.3) {{$\bf N_{i-2}(x)$}};
\node[scale=0.8] (l1) at (1,2.3) {{$\bf N_{i-1}(x)$}};
\node[scale=0.8] (l1) at (3,2.3) {{$\bf N_i(x)$}};

}
\end{tikzpicture}}
        \end{minipage}
            \hfil
        \begin{minipage}{.40\textwidth}
        		\resizebox{5.5cm}{!}{\begin{tikzpicture}[scale=1, Wvertex/.style={circle, draw=black, fill=white, scale=3}, bvertex/.style={circle, draw=black, fill=black, scale=0.3},rvertex/.style={circle, draw=red, fill=red, scale=0.2}]

{
\node[bvertex, label={[font=\small] below:$z$}] (v1) at (-1,-1.3) {};
\node[bvertex, label={[font=\small] below:$z'$}] (v2) at (-1,1.3) {};
\node[bvertex, label={[font=\small] below:$w_1$}] (v3) at (1,-1.3) {};
\node[bvertex, label={[font=\small] below:$w_2$}] (v4) at (1,0) {};
\node[bvertex, label={[font=\small] below:$w_3$}] (v5) at (1,1.3) {};
\node[bvertex, label={[font=\small] below:$v$}] (v6) at (3,-1.3) {};
\node [label={[font=\small] above:$......$}] (uk'') at (4.5,-0.3) {};
\node [label={[font=\small] above:$......$}] (uk'') at (-2.5,-0.3) {};

\draw(-1,0) ellipse (0.8 and 2);
\draw(1,0) ellipse (0.8 and 2);
\draw (3,0) ellipse (0.8 and 2);
\draw (v1)--(v3);
\draw (v1)--(v4);
\draw (v1)--(v5);
\draw (v2)--(v3);
\draw (v2)--(v5);
\draw (v6)--(v3);
\draw (v6)--(v4);
\draw (v6)--(v5);

\node[scale=0.8] (l1) at (-1,2.3) {{$\bf N_{i-2}(x)$}};
\node[scale=0.8] (l1) at (1,2.3) {{$\bf N_{i-1}(x)$}};
\node[scale=0.8] (l1) at (3,2.3) {{$\bf N_i(x)$}};

}
\end{tikzpicture}}
        \end{minipage}
    \end{center}
\caption{Two impossible cases: (i) $z_1,z_2, w', w$ forms a butterfly subgraph. (ii) $z,z', w_1, w_3$ forms a butterfly graph.
}
    \label{fig:theorem7}
\end{figure}

For $v \in N_i(x)$, we define $g(v)=\cup_{k=1}^{i} g(w_k)$ whenever $N^-(v)=\{w_k: 1 \leq k \leq i\}$. We claim
that $|g(v)|=i$. We need to show that any two distinct vertices in $N^-(v) \subseteq N_{i-1}(x)$ have a common neighbor in $N_{i-2}(x)$ and no three distinct vertices in $N^-(v) \subseteq N_{i-1}(x)$ have a common neighbor in $N_{i-2}(x)$. Let $w_1,w_2 \in N^-(v) \subseteq N_{i-1}(x)$ where $w_1 \ne w_2$. Consider the curvature of the edge $w_1v$. Since $d_{w_1}=d_v=L$, there is a perfect matching in $H_1(w_1,v)$ by Lemma \ref{lem:C3free}. The matching edges are from left to right (see Corollary \ref{cor:matchingdirection}).
In particular, a vertex denoted by $z:=z(w_1,v,w_2)$, in $N^-(w_1) \subseteq N_{i-2}(x)$, is matched to $w_2$. Thus, $z$ is a common neighbor of $w_1$ and $w_2$ in $N_{i-2}(x)$. Suppose $z$ is a common neighbor of $w_1, w_2, w_3 \in N^-(v) \subseteq N_{i-1}(x)$ where $w_1,w_2,w_3$ are distinct (see Figure \ref{fig:theorem7} (ii)).
We observe that $z$ can only be matched to one of $w_2$ and $w_3$ in the perfect matching in $H_1(w_1,v)$ because of $c_{w_1}=c_v=1$. If $z$ is matched to $w_2$, then another vertex, denoted by $z'=:z'(w_1,v,w_3)$, in $ N^-(w_1) \subseteq N_{i-2}(x)$, must be matched to $w_3$. Then we get a butterfly graph $z,z',w_1,w_3$
(see Figure \ref{fig:theorem7} (ii)).
However, $Q_L$ does not contain the butterfly subgraph. Contradiction!
Hence, any two distinct vertices in $N^-(v) \subseteq N_{i-1}(x)$ have a common neighbor in $N_{i-2}(x)$ and no three distinct vertices in $N^-(v) \subseteq N_{i-1}(x)$ have a common neighbor in $N_{i-2}(x)$. Then this property implies $|g(v)|=i$. Hence, $g$ is an isomorphism from $G\left[\cup_{k=0}^{i} N_k(x)\right]$ to $Q_L\left[\cup_{k=0}^{i} \binom{[L]}{k}\right]$. The inductive proof is finished.

\end{proof}

Use the same technique, we  have completely classified $C_3$-free Bonnet-Myers sharp graphs with diameter $L=2,3,4,5$ (see table \ref{table1}). We omit the proof here. The second graph in the row where $L=4$ and $d_x=2$ is the first incidence featuring exactly two pairs of poles.
All of these graphs are bipartite. 
Additionally, we note that the maximum degree between any two adjacent vertices always matches the diameter and we conjecture this pattern holds for all $C_3$-free Bonnet-Myers sharp graphs. Based on these observations, we propose the following conjecture to conclude this paper.

\begin{conjecture}
Suppose $G$ is an $C_3$-free and Bonnet-Myers sharp graph with diameter $L$.
Then,
\begin{enumerate}
    \item $G$ is a bipartite graph.
    \item For any edge $uv$, we have $\max\{d_u,d_v\}=L$.
\end{enumerate}
\end{conjecture}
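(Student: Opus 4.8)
The plan is to prove both parts simultaneously by induction on the distance levels $N_i(x)$ from a fixed pole $x$, recycling the perfect-matching machinery behind Theorem \ref{thm:hypercube}. Two facts come for free and anchor the induction at both ends. By Lemma \ref{lem:C3free}(2) every $u\in N_1(x)=N(x)$ has $d_u=L$; and since the anti-pole $y$ exists and is unique (Lemma \ref{lem:interval}), the same lemma applied to the pole $y$ shows every vertex of $N_{L-1}(x)=N(y)$ also has degree $L$. Thus both conclusions already hold on the two outermost shells, and the whole content of the conjecture is concentrated in the middle levels $2\le i\le L-2$.

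For Part (2) I would first reduce it to a pure matching statement. Every edge $uv$ satisfies $|d(x,u)-d(x,v)|\le 1$, so it is either \emph{horizontal} (same level) or \emph{vertical} (consecutive levels). For a vertical edge with $u\in N_i(x)$ and $v\in N_{i+1}(x)$, Lemma \ref{lem:interval} places $u,v$ on a common $x$--$y$ geodesic, so $\kappa(u,v)=\tfrac{2}{L}$ by Lemma \ref{lem:curvature}; on the other hand $G$ is $\mathcal{C}_3$-free, so Corollary \ref{cor:k upper bound} gives $\kappa(u,v)\le \tfrac{2}{\max\{d_u,d_v\}}$, with equality exactly when $H_1(u,v)$ has a perfect matching. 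Comparing the two shows $\max\{d_u,d_v\}=L$ if and only if $H_1(u,v)$ has a perfect matching, i.e. if and only if the Hall condition of Corollary \ref{cor:Hall} holds for $H_1(u,v)$. Hence Part (2) is precisely the assertion that this Hall condition never fails on a vertical edge, and I would try to verify it inductively, feeding in the degree-$L$ vertices already produced at shallower levels together with the orientation constraint of Corollary \ref{cor:matchingdirection} (matchings move strictly away from $x$).

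For Part (1) I would show that no edge is horizontal; since a horizontal edge $uu'$ at level $i$ yields the odd closed walk $x\to u\to u'\to x$ of length $2i+1$, the absence of horizontal edges is equivalent to bipartiteness. A horizontal edge means $d^0_u>0$, which by Lemma \ref{lem:xi} together with $d_u\le L$ forces a deficient back-degree $d^-_u<i$; so horizontal edges can only nucleate at vertices that fail to be full from below, which are exactly the vertices distinguishing the irregular examples of Table \ref{tab:my_label} from $Q_L$. I would try to rule them out level by level, using the matchings guaranteed by the degree-$L$ vertices of Part (2) to pin down the neighborhoods of such deficient vertices. The two inductions are intertwined: excluding horizontal edges keeps the shell count clean enough to check Hall's condition for Part (2), while the degree-$L$ conclusion of Part (2) supplies the matchings used to exclude horizontal edges.

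The hard part will be carrying the induction through the middle levels without the rigidity that the hypothesis $d_x>\tfrac{2}{3}L$ bought in Theorem \ref{thm:hypercube}: there that bound forced $d_x=L$, the exact count $|N_2(x)|=\binom{L}{2}$, and the butterfly-freeness that made the Hall condition automatic, whereas in the general irregular regime neither the shell sizes nor the degrees are determined and butterflies ($C_4$'s between consecutive levels) are genuinely allowed. Consequently the Hall condition for $H_1(u,v)$ no longer follows from a counting identity, and a single deficient-back-degree vertex could break the matching on a vertical edge and spawn a horizontal edge at once, collapsing both inductions together. Finding an inductive invariant robust enough to survive these deficient vertices is, I expect, the real obstacle -- consistent with the conjecture having been confirmed so far only for $L\le 5$ by finite case analysis.
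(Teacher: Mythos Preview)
The statement you are attempting is labelled a \emph{Conjecture} in the paper, not a theorem: the authors explicitly write that they ``propose the following conjecture to conclude this paper,'' and the only evidence they offer is the finite classification in Table~\ref{tab:my_label} for $L\le 5$. There is therefore no paper proof to compare your proposal against.

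As to the proposal itself, it is not a proof but an outline of a strategy, and you say so yourself in the last paragraph. Your reduction of Part~(2) on vertical edges is correct: for such an edge $\kappa(u,v)=\tfrac{2}{L}$ by Lemma~\ref{lem:curvature}, Corollary~\ref{cor:k upper bound} gives $\kappa(u,v)\le \tfrac{2}{\max\{d_u,d_v\}}$ in the $\mathcal{C}_3$-free case with equality exactly when $H_1$ has a perfect matching, and combining these shows $\max\{d_u,d_v\}=L$ if and only if $H_1(u,v)$ has a perfect matching. Likewise the equivalence between ``no horizontal edges'' and bipartiteness is standard. But these reductions do not yield the conjecture: what remains is precisely the step you flag as ``the real obstacle,'' namely producing an inductive invariant that forces the Hall condition at every vertical edge and simultaneously excludes horizontal edges, in the absence of the rigidity that $d_x>\tfrac{2}{3}L$ provided in Theorem~\ref{thm:hypercube}. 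Nothing in the proposal supplies that invariant, so the argument is genuinely incomplete --- as is appropriate for an open problem.
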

\begin{table}[htb]
    \centering
    \begin{tabular}{|c|c|c|}
\hline
 & $d_x=1$ & $P_3$\\
\cline{2-3}
$L=2$ & $d_x=2$ & $Q_2$\\
    \hline
    & $d_x=1$ & none \\
     \cline{2-3}
       $L=3$ & $d_x=2$ &  \tikz[scale=1, vertex/.style={scale=0.5, circle, draw=black, fill=black},
wvertex/.style={scale=0.5, circle, draw=black, fill=white}]{
\node[wvertex] (v1) at (0,0) {};
\node[wvertex](v2) at (1,0.5) {};
\node[wvertex] (v3) at (1,-0.5) {};
\node[wvertex] (v4) at (2,0.5) {};
\node[wvertex] (v5) at (2,-0.5) {};
\node[wvertex] (v6) at (3,0) {};
\draw (v1)--(v2);
\draw (v1)--(v3);
\draw (v2)--(v5);
\draw (v3)--(v4);
\draw (v2)--(v4);
\draw (v3)--(v5);
\draw (v4)--(v6);
\draw (v5)--(v6);
}\\
        \cline{2-3}
        &$d_x=3$ &  $Q_3$ \\
      \hline
    & $d_x=1$ & \tikz[scale=1, vertex/.style={scale=0.5, circle, draw=black, fill=black},
wvertex/.style={scale=0.5, circle, draw=black, fill=white}]{
\node[wvertex] (v1) at (0,0) {};
\node[wvertex](v2) at (1,0) {};
\node[wvertex] (v3) at (2,-0.5) {};
\node[wvertex] (v4) at (2,0) {};
\node[wvertex] (v5) at (2,0.5) {};
\node[wvertex] (v6) at (3,0) {};
\node[wvertex] (v7) at (4,0) {};
\draw (v1)--(v2);
\draw (v2)--(v3);
\draw (v2)--(v4);
\draw (v2)--(v5);
\draw (v3)--(v6);
\draw (v4)--(v6);
\draw (v5)--(v6);
\draw (v6)--(v7);
} \\
     \cline{2-3}
     $L=4$ & $d_x=2$ & \tikz[scale=1, vertex/.style={scale=0.5, circle, draw=black, fill=black},
wvertex/.style={scale=0.5, circle, draw=black, fill=white}]{
\node[wvertex] (v1) at (0,0) {};
\node[wvertex](v2) at (1,-0.5) {};
\node[wvertex] (v3) at (1,0.5) {};
\node[wvertex] (v4) at (2,-0.8) {};
\node[wvertex] (v5) at (2,0) {};
\node[wvertex] (v6) at (2,0.8) {};
\node[wvertex] (v7) at (3,-0.5) {};
\node[wvertex] (v8) at (3,0.5) {};
\node[wvertex] (v9) at (4,0) {};
\draw (v1)--(v2);
\draw (v1)--(v3);
\draw (v2)--(v4);
\draw (v2)--(v5);
\draw (v2)--(v6);
\draw (v3)--(v4);
\draw (v3)--(v5);
\draw (v3)--(v6);
\draw (v4)--(v7);
\draw (v4)--(v8);
\draw (v5)--(v7);
\draw (v5)--(v8);
\draw (v6)--(v7);
\draw (v6)--(v8);
\draw (v7)--(v9);
\draw (v8)--(v9);
} \tikz[scale=1, vertex/.style={scale=0.5, circle, draw=black, fill=black},
wvertex/.style={scale=0.5, circle, draw=black, fill=white}]{
\node[wvertex] (v1) at (0,0) {};
\node[wvertex](v2) at (1,-0.5) {};
\node[wvertex] (v3) at (1,0.5) {};
\node[wvertex] (v4) at (2,-0.8) {};
\node[wvertex] (v5) at (2,-0.3) {};
\node[wvertex] (v6) at (2,0.3) {};
\node[wvertex] (v7) at (2,0.8) {};
\node[wvertex] (v8) at (3,-0.5) {};
\node[wvertex] (v9) at (3,0.5) {};
\node[wvertex] (v10) at (4,0) {};
\draw (v1)--(v2);
\draw (v1)--(v3);
\draw (v2)--(v4);
\draw (v2)--(v5);
\draw (v2)--(v6);
\draw (v3)--(v5);
\draw (v3)--(v6);
\draw (v3)--(v7);
\draw (v4)--(v8);
\draw (v5)--(v8);
\draw (v6)--(v8);
\draw (v5)--(v9);
\draw (v6)--(v9);
\draw (v7)--(v9);
\draw (v8)--(v10);
\draw (v9)--(v10);
}\\
     \cline{2-3}  
      & $d_x=3$ & none \\
     \cline{2-3}
      &$d_x=4$ &  $Q_4$ \\
      \hline
      & $d_x=1$ & none \\
     \cline{2-3}
 $L=5$    & $d_x=2$ & \tikz[scale=1, vertex/.style={scale=0.5, circle, draw=black, fill=black},
wvertex/.style={scale=0.5, circle, draw=black, fill=white}]{
\node[wvertex] (v1) at (0,0) {};
\node[wvertex](v2) at (1,-0.5) {};
\node[wvertex] (v3) at (1,0.5) {};
\node[wvertex] (v4) at (2,-0.8) {};
\node[wvertex] (v5) at (2,-0.3) {};
\node[wvertex] (v6) at (2,0.3) {};
\node[wvertex] (v7) at (2,0.8) {};
\node[wvertex] (v8) at (3,-0.8) {};
\node[wvertex] (v9) at (3,-0.3) {};
\node[wvertex] (v10) at (3,0.3) {};
\node[wvertex] (v11) at (3,0.8) {};
\node[wvertex] (v12) at (4,-0.5) {};
\node[wvertex] (v13) at (4,0.5) {};
\node[wvertex] (v14) at (5,0) {};
\draw (v1)--(v2);
\draw (v1)--(v3);
\draw (v2)--(v4);
\draw (v2)--(v5);
\draw (v2)--(v6);
\draw (v2)--(v7);
\draw (v3)--(v4);
\draw (v3)--(v5);
\draw (v3)--(v6);
\draw (v3)--(v7);
\draw (v4)--(v8);
\draw (v4)--(v9);
\draw (v4)--(v10);
\draw (v5)--(v8);
\draw (v5)--(v9);
\draw (v5)--(v11);
\draw (v6)--(v8);
\draw (v6)--(v10);
\draw (v6)--(v11);
\draw (v7)--(v9);
\draw (v7)--(v10);
\draw (v7)--(v11);
\draw (v12)--(v8);
\draw (v12)--(v9);
\draw (v12)--(v10);
\draw (v12)--(v11);
\draw (v13)--(v8);
\draw (v13)--(v9);
\draw (v13)--(v10);
\draw (v13)--(v11);
\draw (v12)--(v14);
\draw (v13)--(v14);
} \\
     \cline{2-3}
    & $d_x=3$ & none\\
    \cline{2-3}
    & $d_x=4$ & none\\
    \cline{2-3}
    & $d_x=5$ & $Q_5$\\
     \hline
    \end{tabular}
    
    \caption{The list of $C_3$-free Bonnet-Myers Sharp Graphs with $L=2,3,4,5.$}
    \label{table1}
\end{table}

\bibliographystyle{alpha}

\bibliography{BMS}

\end{document}